%% file: gen_d.tex
\documentclass[oneside, 11pt, reqno]{amsart}

\input{preamble}

\tikzcdset{dual/.style={cyanc, squiggly, <->}}
\tikzcdset{eqv/.style={red, <->}}

\title{Maximal $d$-spectra and locally compact Hausdorff spaces}

\keywords{
    Pointfree topology, 
    Priestley duality,  
    $d$-ideal, 
    $d$-nucleus, 
    arithmetic frame,
    stably continuous frame, 
    Scott-continuous nucleus,
    locally compact Hausdorff space.
}
\date{}

\subjclass[2020]{
    18F70;
    06D22;
    06E15;
    06F20;
    54D45.
}

\author{G. Bezhanishvili}
\address{Department of Mathematical Sciences, New Mexico State University, Las Cruces, NM, USA}

\author{P. Bhattacharjee}
\address{Department of Mathematics and Statistics, Florida Atlantic University, Boca Raton, FL, USA}

\author{S. D. Melzer}
\address{Department of Mathematics, University of Salerno, Fisciano, Italy}

\author{M. A. Moshier}
\address{Schmid College, Chapman University, Orange, CA, USA}

\begin{document}
\begingroup
\def\uppercasenonmath#1{}
\let\MakeUppercase\relax
\maketitle
\endgroup

\begin{abstract}
It is an interesting open problem whether every compact Hausdorff space can be realized as the maximal $d$-spectrum of an arithmetic frame. We approach this problem by generalizing the $d$-nucleus to a stably continuous frame. We use Priestley duality to characterize the resulting $\d$-nucleus, which allows us to prove that 
every locally compact Hausdorff space can be realized as the maximal $\d$-spectrum of a continuous regular frame. 
As a corollary, we obtain that every locally Stone space can be realized as the maximal $d$-spectrum of an algebraic regular frame. 
\end{abstract}

\setcounter{tocdepth}{1}
\tableofcontents

\section{Introduction}

An ideal $I$ of a commutative ring is a {\em $d$-ideal} provided
\[
a \in I \Longrightarrow a^{\perp\perp} \subseteq I,
\]
where $a^{\perp}$ denotes the annihilator of $a$. This concept originates in \cite{Spe72} under the name of {\em Baer ideal}, and was
further studied by numerous authors (see, e.g., \cite{Eva72,Jay84,Con88}). The term $d$-ideal was introduced by Mason \cite{Mas89}, following the usage of the term in Riesz spaces (see, e.g., \cite{Lux73,HdP80a}). 

A stronger notion is that, for each finite set $S$,
\[
S \subseteq I \Longrightarrow S^{\perp\perp} \subseteq I.
\] 
In general, these two notions are not equivalent, and the conditions under which they are equivalent have been studied in the literature (see, e.g.,  \cite{Mas89,AM11,Dub19}).  

The situation improves in 
Riesz spaces, where the two notions become equivalent for ordered ideals (also known as $\ell$-ideals). 
For each $\ell$-ideal $I$, 
there is a least $d$-ideal $dI$ containing $I$, and the correspondence $I \mapsto dI$ is a nucleus, whose fixpoints are precisely the $d$-ideals. 
Mart\'inez and Zenk \cite{MZ03} initiated the study of the corresponding nucleus on an arbitrary arithmetic frame, which they termed the {\em $d$-nucleus}. Given such a frame $L$ and denoting the pseudocomplement of $a \in L$ by $a^*$, we have:
\[
    da = \bigvee \{k^{**} \mid k \text{ is compact and } k \leq a\}.
\]
They showed that $d$ is the largest nucleus below the double-negation nucleus that preserves directed joins. In other words, using the terminology of \cite{Esc99}, $d$ is the largest {\em Scott-continuous} nucleus below the double-negation nucleus. The $d$-nucleus and the corresponding sublocale $dL$ were further studied in \cite{DI13,DS19,BKM23}. 

The study of the spectrum of maximal $d$-elements was initiated in \cite{Bha19}. More broadly, this is part of the problem of determining which spaces arise as spectra of frames. Maximal spectra provide a classical example. In particular, Wallman's representation theorem \cite{Wal38} shows that every compact $T_1$-space arises as the maximal spectrum of a suitable frame. It is therefore natural to ask for analogous representation results for maximal $d$-spectra. In general, the maximal $d$-spectrum is $T_1$ and it is compact provided $L$ has a unit \cite{Bha19}.
However, it need not be Hausdorff \cite{BBM25}. This is surprising since in the classical setting of $d$-ideals and $d$-subgroups, the corresponding space is Hausdorff \cite{HdP83,BM18}. As noted in \cite[p.~378]{Bha19}, a sufficient condition for Hausdorffness of the maximal $d$-spectrum of an arithmetic frame is \emph{disjointification}.\footnote{Banaschewski \cite{Ban97} refers to this condition as \emph{coherent normality}.} In general, it remains open which compact Hausdorff spaces arise as maximal $d$-spectra of arithmetic frames.

In this paper, we approach this problem by generalizing the $d$-nucleus to the setting of \emph{stably continuous frames}. Since arithmetic frames are precisely the algebraic stably continuous frames, this is the natural ambient category in which to seek such a generalization. 
While the classical $d$-nucleus is defined in terms of compact elements of an arithmetic frame, in the broader setting of continuous frames one works instead with the {\em way-below relation} $\ll$ (see, e.g., \cite{GH+03}).
Replacing compact elements with the way-below relation gives rise to the following definition:
\[
    \d a = \bigvee\{b^{**} \mid b \ll a\}.
\]
This is precisely Escard\'o's \emph{continuous coreflection} of the double-negation nucleus  on a stably continuous frame, which means that $\d$ is the largest Scott-continuous nucleus beneath the double-negation nucleus  \cite[Lem.~3.1.12, Thm.~3.1.15]{Esc98}.\footnote{The stably continuous frames considered in \cite{Esc98} are compact, but compactness is not used in his proofs.} 

We point out that Scott-continuous nuclei form a natural class of nuclei, which is particularly well behaved on a continuous frame $L$: while the sublocale corresponding to an arbitrary nucleus on $L$ need not be continuous, those arising from Scott-continuous nuclei always are (see, e.g., \cite[Lem.~VII-2.3]{Joh82}). In \cite{Esc01}, it was shown that the Scott-continuous nuclei form the patch frame of $L$.
In the setting of algebraic frames, they reduce to the inductive nuclei of \cite{MZ03}.

Using Priestley duality for stably continuous frames \cite{BM23}, we obtain a Priestley-theoretic description of Scott-continuous nuclei and apply it to the $\d$-nucleus, describing the corresponding nuclear subset and its space of points, thereby extending the results of \cite{BBM25}. As an application, we obtain an alternative description of the Gleason cover of a compact Hausdorff space in terms of compact $d$-elements.
We prove that every locally compact Hausdorff space is homeomorphic to the maximal $\d$-spectrum of 
a continuous regular frame. As a consequence, we obtain that every locally Stone space is homeomorphic to the maximal $d$-spectrum of an algebraic regular frame.
While this solves the problem for zero-dimensional spaces, the general representation problem for maximal $d$-spectra of arithmetic frames remains an interesting open problem,
as does the more general problem for maximal $\d$-spectra of stably continuous frames.

The paper is organized as follows. In \cref{sec:Preliminaries}, we recall various categories of continuous and algebraic frames, 
and their 
dual categories of spaces.
In \cref{sec:Priestley-duality}, we outline Priestley duality for continuous and algebraic frames.
In \cref{sec:Scott-cts-nuclei}, we give a Priestley-theoretic characterization of Scott-continuous nuclei on continuous frames, extending the characterization of inductive nuclei on arithmetic frames.
In \cref{sec:d-nucleus}, we 
concentrate on the nucleus $d$ on an arithmetic frame and its generalization $\d$ on a stably continuous frame. We  use Priestley duality to describe the nuclear subset corresponding to $\d$ and its space of points. 
We also obtain an alternative description of the Gleason cover of a compact Hausdorff space in terms of compact $d$-elements. 
Finally, in \cref{sec:d-spectra}, we introduce the maximal $\d$-spectrum and prove that every locally compact Hausdorff space is homeomorphic to $\max(\d L)$ for some continuous regular frame $L$. As a consequence, we obtain that every locally Stone space is homeomorphic to $\max(dL)$ for some algebraic regular frame $L$.
We conclude the paper with several open problems. 

\section{Preliminaries} \label{sec:Preliminaries}
In this section, we recall various categories of continuous and algebraic frames and the corresponding dual categories of locally compact and compactly based sober spaces.

A \emph{frame} is a complete lattice $L$ satisfying the \emph{frame law} 
\[a \wedge \bigvee S = \bigvee\{a \wedge s \mid s \in S\}\]
for all $a \in L$ and $S \subseteq L$. 
A \emph{frame homomorphism} is a map between frames that preserves arbitrary joins and finite meets. Let \Frm be the category of frames and frame homomorphisms. 

Let $L$ be a frame. For $a,b \in L$, we say that $a$ is {\em way below} $b$, and write $a \ll b$, provided $b \leq \bigvee S$ implies $a \leq \bigvee T$ for some finite $T \subseteq S$. Then $a$ is \emph{compact} if $a \ll a$, and $L$ is {\em compact} if its top element $1$ is compact. We call $L$  
\begin{itemize}
    \item \emph{continuous} if each $a \in L$ is a join of elements way below it;
    \item \emph{stably continuous} if in addition $\ll$ is \emph{stable} (or \emph{multiplicative}), meaning that
    \[
    a \ll b \text{ and } a \ll c \implies a \ll b \wedge c;
    \]
    \item \emph{stably compact} if it is a compact stably continuous frame.
\end{itemize}

The following basic properties of the way-below relation are well known (see, e.g., \cite[pp.~50, 56]{GH+03}) and will be used throughout.

\begin{lemma} \label{lem:properties-of-way-below}
    Let $L$ be a frame.
    \begin{enumerate}[cref=lemma]
        \item $a \ll b$ implies $a \leq b$. \label{lem:weakening-1}
        \item $a \leq b \ll c \leq d$ implies $a \ll d$. \label{lem:weakening-2}
    \end{enumerate}
    If $L$ is continuous, then
    \begin{enumerate}[cref=lemma,resume]
        \item $a \ll b$ implies $a \ll c \ll b$ for some $c \in L$. \label{lem:interpolating}
        \item $\upset a = \bigcap \{ \upset b \mid b \ll a\}$.
        \label{upset-intersection-ll}
    \end{enumerate}
\end{lemma}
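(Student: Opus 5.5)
The plan is to verify the four items directly from the definition of $\ll$ in a frame; only (3) and (4) need continuity of $L$.

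For (1), suppose $a \ll b$ and apply the definition with $S = \{b\}$. Since $b \leq \bigvee S$, there is a finite $T \subseteq \{b\}$ with $a \leq \bigvee T$, and so $a \leq b$.

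For (2), let $a \leq b \ll c \leq d$ and suppose $d \leq \bigvee S$. Then $c \leq \bigvee S$. Because $b \ll c$, some finite $T \subseteq S$ satisfies $b \leq \bigvee T$, and hence $a \leq \bigvee T$.

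For (3), I will follow the standard interpolation argument.
\begin{itemize}
\item Let $D = \{x \in L \mid x \ll c \text{ for some } c \ll b\}$.
\item By continuity, $b = \bigvee\{c \mid c \ll b\}$, and each such $c = \bigvee\{x \mid x \ll c\}$. Therefore $\bigvee D = b$.
\item Since $a \ll b \leq \bigvee D$, there is a finite $T \subseteq D$ with $a \leq \bigvee T$. Write $T = \{x_1,\dots,x_n\}$ with $x_i \ll c_i \ll b$.
\item Finite joins preserve $\ll$: if $x_i \ll y_i$ for each $i$, then $\bigvee x_i \ll \bigvee y_i$. To see this, note $\bigvee y_i \leq \bigvee S$ gives $y_i \leq \bigvee S$ for each $i$, take a finite $T_i$ for each $i$, and combine them. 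Similarly $\ll$ is closed under finite joins on the left.
\item Set $c = c_1 \vee \dots \vee c_n$. Then $c \ll b$, and $\bigvee x_i \ll c$ by (2) together with the closure property, since each $x_i \ll c_i \leq c$.
\item Finally $a \leq \bigvee T \ll c$, so $a \ll c$ by (2).
\end{itemize}
The case $T = \emptyset$ is harmless: then $a = 0$, and we may take $c = 0$, since $0 \ll x$ for all $x$.

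For (4), the inclusion $\subseteq$ is immediate. If $x \geq a$ and $b \ll a$, then $b \leq a \leq x$ by (1). For $\supseteq$, suppose $x \geq b$ for every $b \ll a$. Then $x \geq \bigvee\{b \mid b \ll a\} = a$ by continuity.

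The main obstacle is (3): choosing the right directed family $D$ and checking that finite joins preserve $\ll$.
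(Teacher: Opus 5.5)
Your proof is correct. The paper gives no argument of its own for this lemma and only cites \cite[pp.~50, 56]{GH+03}; your treatment of (1), (2), (4), and of the interpolation (3) via $D=\{x \mid x \ll c \ll b \text{ for some } c\}$, is the standard argument found there, adapted to the paper's finite-subfamily definition of $\ll$ by using closure of $\ll$ under finite joins, so the directedness of $D$ that you mention is never actually needed.
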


We will also make use of Scott-open filters, where we recall that a filter $\filter{F}$ of a frame $L$ is \emph{Scott-open} provided whenever $S \subseteq L$ is directed and $\bigvee S \in \filter{F}$, then $S \cap \filter{F} \neq \varnothing$. The following lemma collects some known facts about Scott-open filters that will be used in what follows (see, e.g., \cite[p.~96]{GH+03} for~\tref{lem:sep-by-SFilt}, \cite[p.~222]{HM81} for~\tref{lem:round-filters}, and \cite[Proof of Lemma~1]{BB88} for~\tref{lem:upup-Scott-2}, which readily generalizes to~\tref{lem:upup-Scott}).

\begin{lemma}
    Let $L$ be a continuous frame.

    \begin{enumerate}[cref=lemma]
        \item If $a \nleq b$, then there exists a Scott-open filter $\filter{F} \subseteq L$ such that $a \in \filter{F}$ and $b \notin \filter{F}$.\label{lem:sep-by-SFilt}
        \item If $\filter{F}$ is a Scott-open filter and $a \in \filter{F}$, then there exists $b \in \filter{F}$ such that $b \ll a$.\label{lem:round-filters}
        \item If $\filter{F}$ is a Scott-open filter and $\bigvee S \in \filter{F}$, then there exists a finite $T \subseteq S$ such that $\bigvee T \in \filter{F}$.\label{lem:SFilt-finite-joins}
        \item If $L$ is stably continuous, then $\twoheaduparrow a := \{b \in L \mid a \ll b\}$ is a Scott-open filter for each $a \ll 1$.\label{lem:upup-Scott}
        \item If $L$ is stably compact, then $\twoheaduparrow a$ is a Scott-open filter for each $a \in L$. \label{lem:upup-Scott-2}
    \end{enumerate}
\end{lemma}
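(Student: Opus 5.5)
Items (1), (2) and (5) are cited facts, so the plan is to give short arguments for them and derive (3) and (4) from them.

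\emph{Item (1).} Since $L$ is continuous and $a \nleq b$, \cref{upset-intersection-ll} (applied to $a$, i.e.\ $a=\bigvee\{c\mid c\ll a\}$) gives some $c \ll a$ with $c \nleq b$. Interpolating with \cref{lem:interpolating} yields a sequence $c = c_0 \ll \dots \ll c_2 \ll c_1 \ll c_0' = a$. The standard construction then produces a Scott-open filter containing $a$ and avoiding $b$: take a Scott-open filter $\filter{G}\ni a$ with $c\le$ every element of $\filter{G}$ (built as a union of $\twoheaduparrow$-chains), and extend to a maximal Scott-open filter missing $b$ by Zorn's lemma. Since this is the cited result of \cite{GH+03}, I will simply invoke it.

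\emph{Item (2).} Let $\filter{F}$ be Scott-open and $a\in\filter{F}$. The set $\{b \mid b\ll a\}$ is directed in a continuous frame, since the join of two elements way below $a$ is way below $a$. Its join is $a\in\filter{F}$, so by Scott-openness some $b\ll a$ lies in $\filter{F}$.

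\emph{Item (3).} The finite joins $\bigvee T$, for $T\subseteq S$ finite, form a directed family with join $\bigvee S\in\filter{F}$, so Scott-openness gives some finite $T$ with $\bigvee T\in\filter{F}$.

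\emph{Item (4).} Let $a\ll 1$. Then $\twoheaduparrow a$ is nonempty because it contains $1$. It is an upset by \cref{lem:weakening-2}, and it is closed under finite meets by stability of $\ll$, so it is a filter. For Scott-openness, suppose $S$ is directed with $a \ll \bigvee S$. Interpolate $a\ll c\ll \bigvee S$ using \cref{lem:interpolating}. By the definition of $c \ll \bigvee S$, there is a finite $T\subseteq S$ with $c\le\bigvee T$. Directedness gives some $s\in S$ with $\bigvee T\le s$. Then $a\ll c\le s$, so $a\ll s$, i.e.\ $s\in\twoheaduparrow a$.

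\emph{Item (5).} This follows from (4), since compactness of $L$ means $1 \ll 1$, and hence $a \le 1 \ll 1$ gives $a\ll 1$ for every $a$ by \cref{lem:weakening-2}.

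The only non-routine step is the Zorn/separation argument in (1), and for it I defer to the cited source.
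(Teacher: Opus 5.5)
Your proposal is correct and takes essentially the same approach as the paper. The paper gives no argument of its own: it cites standard sources for (1), (2) and (5) and remarks that the proof of (5) generalizes to (4). Your directed-set arguments for (2) and (3), the stability-plus-interpolation argument for (4), and the derivation of (5) from $1 \ll 1$ are those standard arguments.

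One small point concerns (1): the Zorn's lemma step is superfluous. Take the interpolating sequence $c \ll \dots \ll c_2 \ll c_1 \ll a$. Then $\bigcup_n \upset c_n$ is already a Scott-open filter that contains $a$ and is contained in $\upset c$, so it omits $b$ because $c \nleq b$.
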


A frame homomorphism $h : L \to M$ is \emph{proper} if $a \ll b$ implies $ha \ll hb$. It is easy to see that the identity frame homomorphism is proper and that the composition of two proper frame homomorphisms is proper.

\begin{definition} \label{def:ConFrm}
    \leavevmode
    \begin{enumerate}
        \item Let \ConFrm be the category of continuous frames and proper frame homomorphisms.
        \item Let \StConFrm be the full subcategory of \ConFrm consisting of stably continuous frames.
        \item Let \StKFrm be the full subcategory of \StConFrm consisting of stably compact frames.
    \end{enumerate}
\end{definition}

Another significant relation in frames is the well-inside relation. We recall that $a$ is \emph{well inside} $b$, written $a \prec b$, if $a^* \vee b = 1$, where 
\[
    a^* = \bigvee\{c \in L \mid c \wedge a = 0\}
\]
is the \emph{pseudocomplement} of $a$. Then a frame is \emph{regular} provided each element is a join of elements well inside it.

\begin{definition}
    \leavevmode
    \begin{enumerate}
        \item Let $\ConRFrm$ be the full subcategory of \ConFrm consisting of continuous regular frames.
        \item Let $\KRFrm$ be the category of compact regular frames and frame homomorphisms.
    \end{enumerate}
\end{definition}

It is well known 
(see, e.g., \cite[p.~8]{BB88}) that \KRFrm is a full subcategory of \StKFrm. This is based on the fact that in a regular frame, $\ll$ is a subrelation of $\prec$ (see, e.g., \cite[p.~135]{PP12}). We thus have:

\begin{lemma} \label{conreg-is-stably}
    Continuous regular frames are stably continuous. 
\end{lemma}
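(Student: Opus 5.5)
Since a continuous regular frame is continuous by assumption, all that remains is stability of $\ll$: if $a \ll b$ and $a \ll c$, then $a \ll b \wedge c$. The plan is to use the fact just cited, that in a regular frame $\ll$ is a subrelation of $\prec$, and to combine it with the frame law to distribute a cover of $b \wedge c$ across the given relations.

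Fix $a \ll b$ and $a \ll c$, and let $S$ be a family with $b \wedge c \leq \bigvee S$. Since $a \ll c$, we have $a \prec c$, so $a^* \vee c = 1$. Using the frame law,
\[
b = b \wedge (a^* \vee c) = (b \wedge a^*) \vee (b \wedge c) \leq a^* \vee \bigvee S .
\]
Now apply $a \ll b$ to the family $S \cup \{a^*\}$, whose join is $a^* \vee \bigvee S$. This yields a finite $T \subseteq S$ with $a \leq a^* \vee \bigvee T$. Meeting both sides with $a$ and using $a \wedge a^* = 0$ together with the frame law gives
\[
a = a \wedge \Bigl(a^* \vee \bigvee T\Bigr) = a \wedge \bigvee T \leq \bigvee T .
\]
Hence $a \ll b \wedge c$, which is the stability required.

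The only nontrivial input is that $\ll \subseteq \prec$ in a regular frame, which the paper cites from the literature. If it had to be proved, the argument would go as follows. Suppose $a \ll c$. By regularity, $c = \bigvee\{e \mid e \prec c\}$, so compactness relative to $c$ gives $a \leq e_1 \vee \dots \vee e_n = e$ with each $e_i \prec c$. Then $e \prec c$, since $e^* = \bigwedge_i e_i^*$ and finitely many relations $e_i^* \vee c = 1$ combine distributively to $\bigl(\bigwedge_i e_i^*\bigr) \vee c = 1$. Finally, $a \leq e$ gives $e^* \leq a^*$, so $a^* \vee c \geq e^* \vee c = 1$, that is, $a \prec c$.

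I do not expect a real obstacle. The one point needing care is the correct handling of the adjoined element $a^*$ when passing to the finite subfamily: it may or may not belong to the finite subset produced, and the final step of meeting with $a$ removes it in either case.
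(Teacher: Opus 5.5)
Your proof is correct and follows the paper's route: pass from $a \ll b,c$ to a well-inside relation, enlarge the cover by $a^*$, apply $a \ll b$, and remove $a^*$ by meeting with $a$. The only difference is harmless: you use just $a \prec c$ and the frame law to get $b \le a^* \vee \bigvee S$, whereas the paper combines $a \prec b$ and $a \prec c$ into $a \prec b \wedge c$ to get $a^* \vee \bigvee S = 1$ (and your proof of $\ll \subseteq \prec$, which the paper only cites, is also correct).
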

\begin{proof}
Let $L$ be continuous and regular, $a \ll b,c$, and $b\wedge c \leq \bigvee S$ for some $S \subseteq L$. Since $L$ is regular, from $a \ll b,c$ it follows that $a \prec b,c$, so $a \prec b \wedge c$. Therefore, $ a^* \vee (b \wedge c) = 1$, so $a^* \vee \bigvee S = 1$, and hence $b \le a^* \vee \bigvee S$. Since $a \ll b$, there exists a finite $T \subseteq S$ with $a \le a^* \vee \bigvee T$. Thus, $a \le \bigvee T$, yielding that $a \ll b \wedge c$. Consequently, $L$ is stably continuous.
\end{proof}

It follows that \ConRFrm is the full subcategory of \StConFrm consisting of regular frames and \KRFrm is the full subcategory of \ConRFrm consisting of compact frames.

We now turn our attention to algebraic frames by replacing the way-below relation with compact elements. 
We write $\K(L)$ for the set of compact elements of a frame $L$. Then $L$ is
\begin{itemize}
    \item \emph{algebraic} if each $a \in L$ is a join from $\K(L)$;
    \item \emph{arithmetic} if in addition $\K(L)$ is closed under binary meets;
    \item \emph{coherent} if it is a compact arithmetic frame.
\end{itemize}

In algebraic frames, the way-below relation is 
determined by compact elements:

\begin{lemma}[{see, e.g., \cite[p.~116]{GH+03}}]
    Let $L$ be an algebraic frame. Then $a \ll b$ iff $a \leq k \leq b$ for some $k \in \K(L)$. \label{lem:ll-and-alg}
\end{lemma}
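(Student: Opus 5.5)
We prove the two directions of \cref{lem:ll-and-alg} separately, using only the definition of $\ll$ and the fact that every element of $L$ is a join of compact elements.

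For the right-to-left direction, suppose $a \leq k \leq b$ with $k \in \K(L)$. Then $k \ll k$ by definition of compactness. By \cref{lem:weakening-2}, applied to $a \leq k \ll k \leq b$, we obtain $a \ll b$. This direction does not use algebraicity at all.

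For the left-to-right direction, suppose $a \ll b$. Since $L$ is algebraic, write $b = \bigvee S$ with $S = \{k \in \K(L) \mid k \leq b\}$. Then $b \leq \bigvee S$, so the definition of $a \ll b$ provides a finite $T \subseteq S$ with $a \leq \bigvee T$. Set $k := \bigvee T$. Each element of $T$ lies below $b$, so $k \leq b$, and we have $a \leq k \leq b$.

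The only point needing an argument is that $k$ is compact, i.e.\ that a finite join of compact elements is compact. If $T$ is empty, then $k = 0$, which is trivially compact. Otherwise, suppose $k \leq \bigvee R$. Each $t \in T$ satisfies $t \leq \bigvee R$, so compactness of $t$ gives a finite $R_t \subseteq R$ with $t \leq \bigvee R_t$. Then $R' := \bigcup_{t \in T} R_t$ is a finite subset of $R$ with $k \leq \bigvee R'$. Hence $k \ll k$, so $k \in \K(L)$, which completes the proof.
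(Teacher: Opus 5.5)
Your proof is correct and is the standard argument. The paper gives no proof of its own and simply cites \cite[p.~116]{GH+03}, where the same reasoning appears, usually phrased as: the compact elements below $b$ form a directed set with join $b$. Your finite-join step establishes exactly that directedness, and both directions are complete as written.
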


A frame homomorphism $h : L \to M$ is \emph{coherent} if $k \in \K(L)$ implies $hk \in \K(M)$. It is easy to see that the identity frame homomorphism is coherent and that the composition of two coherent frame homomorphisms is coherent. 
\begin{definition}
    \leavevmode
    \begin{enumerate}
        \item Let \AlgFrm be the category of algebraic frames and coherent frame homomorphisms.
        \item Let \AriFrm be the full subcategory of \AlgFrm consisting of arithmetic frames.
        \item Let \CohFrm be the full subcategory of \AriFrm consisting of coherent frames.
    \end{enumerate}
\end{definition}

In algebraic frames, stability of the way-below relation corresponds precisely to being an arithmetic frame. We thus have:
\begin{proposition}[{\cite[p.~117]{GH+03}}]
    Let $L$ be an algebraic frame.
    \begin{enumerate}
        \item $L$ is arithmetic iff $L$ is stably continuous.
        \item $L$ is coherent iff $L$ is stably compact.
    \end{enumerate}
\end{proposition}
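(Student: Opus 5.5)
We prove both items by reducing stability of $\ll$ to closure of $\K(L)$ under binary meets, using \cref{lem:ll-and-alg}. Recall that $L$ is algebraic throughout, hence continuous (each $a$ is a join of compact $k \le a$, and $k \ll a$ by \cref{lem:ll-and-alg} with witness $k$). So $L$ is stably continuous exactly when $\ll$ is stable, and the first item reduces to: $\K(L)$ is closed under binary meets iff $\ll$ is stable.

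For the forward direction, assume $\K(L)$ is closed under binary meets, and let $a \ll b$ and $a \ll c$. By \cref{lem:ll-and-alg} there are compact $k,l$ with $a \le k \le b$ and $a \le l \le c$. Then $a \le k \wedge l \le b \wedge c$, and $k \wedge l$ is compact by hypothesis. Hence \cref{lem:ll-and-alg} gives $a \ll b \wedge c$.

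For the converse, assume $\ll$ is stable, and let $k,l \in \K(L)$. Then $k \wedge l \ll k$ and $k \wedge l \ll l$ by \cref{lem:weakening-2}, since $k \wedge l \le k \ll k$ and $k \wedge l \le l \ll l$. Stability then gives $k \wedge l \ll k \wedge l$, i.e., $k \wedge l$ is compact.

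One point needs care: whether the paper's notion of arithmetic also requires $1$ to be compact, or requires $\K(L)$ to contain the empty meet. It does not: arithmetic is defined only by closure under binary meets, and stable continuity is defined only for binary meets. So the binary case above is the whole content, and I expect no real obstacle.

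The second item follows from the first. $L$ is coherent iff it is arithmetic and $1 \in \K(L)$, and $L$ is stably compact iff it is stably continuous and $1 \ll 1$. Both extra conditions say the same thing, namely that $1$ is compact. Combining this with the first item gives the equivalence.
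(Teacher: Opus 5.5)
Your proof is correct. The paper gives no argument of its own here; it only cites \cite[p.~117]{GH+03}, so your derivation serves as a self-contained replacement, and it is the standard one. In the forward direction you use \cref{lem:ll-and-alg} to get compact witnesses $k,l$ and then take $k\wedge l$. The converse needs only \cref{lem:weakening-2} and stability, so it holds in any frame, not just algebraic ones.

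What your route adds over the citation is an explicit check against the paper's own conventions. This matters because conventions vary: the paper's footnote on Escard\'o notes that some sources build compactness into ``stably continuous.'' Under the paper's definitions, both ``arithmetic'' and ``stable'' involve only binary meets. So your reduction of item (2) to item (1) plus the shared condition $1\in\K(L)$ is exactly right.
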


In view of \cref{lem:ll-and-alg}, a frame homomorphism between algebraic frames is coherent iff it is proper. 
Consequently, the categories \AlgFrm, \AriFrm, and \CohFrm are full subcategories of \ConFrm, \StConFrm, and \StKFrm, respectively. 

An element $a$ of a frame is \emph{complemented} if $a \prec a$.
A frame is \emph{zero-dimensional} if each element is a join of complemented elements. It is easy to see that each zero-dimensional frame is regular.  A frame $L$ is
\begin{itemize}
    \item \emph{locally Stone} if it is a continuous zero-dimensional frame;
    \item \emph{Stone} if it is a compact zero-dimensional frame.
\end{itemize}

\begin{definition}
    \leavevmode
    \begin{enumerate}
        \item Let \LStoneFrm be the full subcategory of \ConFrm consisting of locally Stone frames.
        \item Let \StoneFrm be the full subcategory of \KRFrm consisting of Stone frames.
    \end{enumerate}
\end{definition}

Every locally Stone frame is algebraic (see, e.g., \cite[Rem.~3.12]{BK25}), and hence every proper frame homomorphism between locally Stone frames is coherent. Thus, \LStoneFrm is a full subcategory of \AriFrm. Similarly, every Stone frame is algebraic. Therefore, since Stone frames are compact regular, 
every frame homomorphism between Stone frames is coherent. 
Thus, \StoneFrm is a full subcategory of \CohFrm. In fact, 
we have:

\begin{proposition}[{\cite[Thm.~2.4(1) and Cor.~2.6]{MZ03}}]
    Let $L$ be an algebraic frame.
    \begin{enumerate}
        \item $L$ is locally Stone iff $L$ is regular.
        \item $L$ is Stone iff $L$ is compact regular.
    \end{enumerate}
\end{proposition}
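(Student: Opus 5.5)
Both parts come down to comparing compact elements with complemented ones. I would prove (1) directly and then obtain (2) by adding compactness.

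For (1), the forward direction is immediate: a locally Stone frame is zero-dimensional, and every zero-dimensional frame is regular. For the converse, let $L$ be algebraic and regular. I would show that every compact element $k \in \K(L)$ is complemented, i.e.\ $k \prec k$. Since $L$ is algebraic, every element is a join of compact elements, so this gives zero-dimensionality. By regularity, $k = \bigvee\{c \mid c \prec k\}$. Since $k$ is compact, finitely many such $c$ already cover $k$, and a finite join of elements well inside $k$ is well inside $k$. Thus there is $c \prec k$ with $k \le c$, hence $k \prec k$ by monotonicity of $\prec$ in its first argument. So each compact element is complemented, and $L$ is zero-dimensional.

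It remains to see that $L$ is continuous. An algebraic frame is continuous, because by \cref{lem:ll-and-alg} each compact $k \le a$ satisfies $k \ll a$. Hence $L$ is locally Stone. The two facts about $\prec$ used above are routine: if $c \prec k$ and $c' \prec k$, then $(c \vee c')^* = c^* \wedge c'^*$, and distributivity gives $(c^* \wedge c'^*) \vee k = 1$; and $k \le c$ with $c \prec k$ gives $c^* \le k^*$, so $k^* \vee k = 1$.

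For (2), a Stone frame is compact zero-dimensional, hence compact regular. Conversely, let $L$ be algebraic and compact regular. By (1), $L$ is zero-dimensional, and being compact, it is Stone.

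The only genuine step is showing that compact elements of a regular frame are complemented. I expect no obstacle beyond checking that the well-inside relation is closed under finite joins in its first argument.
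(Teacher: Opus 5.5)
Your proof is correct. Its key step is that a compact $k$ in a regular frame satisfies $k \le c \prec k$ for some $c$, since $\{c \mid c \prec k\}$ is closed under finite joins; this gives zero-dimensionality straight from algebraicity, and continuity of algebraic frames follows from \cref{lem:ll-and-alg}. The paper gives no argument of its own and simply cites Mart\'inez--Zenk; your reduction to ``compact elements of a regular frame are complemented'' is the standard way to prove that cited fact.
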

Consequently, \LStoneFrm and \StoneFrm are the full subcategories of \ConRFrm and \KRFrm, respectively, consisting of algebraic frames.
We thus have the following inclusion relations between the above categories of continuous and algebraic frames, 
where \begin{tikzcd}\cat{A} \ar[r, hook] & \cat B\end{tikzcd} stands for ``$\cat A$ is a full subcategory of $\cat B$.''

\begin{figure}[H]
    \centering
    \begin{tikzcd}[column sep={1.25cm, between origins}, row sep={.85cm, between origins}]
        \ConFrm  \\
        && \AlgFrm\\
        \StConFrm \\
        && \AriFrm\\
        & \ConRFrm\\
        &&& \LStoneFrm\\
        \StKFrm \\
        && \CohFrm\\
        & \KRFrm\\
        &&& \StoneFrm
        \ar[plotd, from=3-1, to=1-1, hook']
        \ar[plotd, from=7-1, to=3-1, hook']
        \ar[plotd, from=2-3, to=1-1, hook']
        \ar[plotd, from=4-3, to=3-1, hook']
        \ar[plotd, from=4-3, to=2-3, hook', crossing over]
        \ar[plotd, from=8-3, to=7-1, hook']
        \ar[plotd, from=8-3, to=4-3, hook', crossing over]
        \ar[plotd, from=5-2, to=3-1, hook']
        \ar[plotd, from=9-2, to=7-1, hook']
        \ar[plotd, from=9-2, to=5-2, hook', crossing over]
        \ar[plotd, from=6-4, to=4-3, hook', crossing over]
        \ar[plotd, from=6-4, to=5-2, hook', crossing over]
        \ar[plotd, from=10-4, to=9-2, hook']
        \ar[plotd, from=10-4, to=8-3, hook', crossing over]
        \ar[plotd, from=10-4, to=6-4, hook', crossing over]
    \end{tikzcd}
    \caption{Categories of continuous and algebraic frames}\label{fig:con-and-alg}
\end{figure}

We next describe the categories of topological spaces that are dual to the above categories of frames.
Recall that a topological space $X$ is \emph{sober} if each irreducible closed set is the closure of a unique point, 
and that it is \emph{locally compact} if for each $x \in X$ and open neighborhood $U$ of $x$, there are an open $V$ and a compact $K$ such that $x \in V \subseteq K \subseteq U$. 
A set $S \subseteq X$ is \emph{saturated} if it is an intersection of open sets. 
In view of \cite[Lem.~VI-6.21]{GH+03}, we call a continuous map $f : X \to Y$ between locally compact sober spaces  \emph{proper} if $f^{-1}(K)$ is compact for each compact saturated set $K \subseteq Y$. 

\begin{definition}
    \leavevmode
    \begin{enumerate}
        \item Let $\LCSob$ be the category of locally compact sober spaces and proper maps.
        \item Let $\SLCSp$ be the full subcategory of $\LCSob$ consisting of spaces in which the binary intersection of two compact saturated sets is compact.
        \item Let $\SKSp$ be the full subcategory of $\SLCSp$ consisting of spaces that are additionally compact.
        \item Let $\LCHaus$ be the full subcategory of $\LCSob$ consisting of locally compact Hausdorff spaces. 
        \item Let $\KHaus$ be the category of compact Hausdorff spaces and continuous maps.
    \end{enumerate}
\end{definition}
Since locally compact Hausdorff spaces are stably locally compact, $\LCHaus$ is a full subcategory of $\SLCSp$. Therefore, since continuous maps between compact Hausdorff spaces are proper, $\KHaus$ is a full subcategory of both $\LCSob$ and $\SKSp$ (see, e.g., \cite[p.~66]{BH14}). The next result is well known (see, e.g., \cite[Prop.~V-5.20 and Sec.~VI-7.4]{GH+03} or \cite[Sec.~VII-4]{Joh82}).

\begin{theorem} \label{thm:confrm-duality}
    \leavevmode
    \begin{enumerate}[cref=theorem]
        \item \ConFrm is dually equivalent to \LCSob. \label{hofmann-lawson}
        \item \StConFrm is dually equivalent to \SLCSp.
        \label{hofmann-lawson-2}
        \item \StKFrm is dually equivalent to \SKSp.
        \item \ConRFrm is dually equivalent to \LCHaus. \label{hofmann-lawson-LC}
        \item \KRFrm is dually equivalent to \KHaus. \label{isbell-duality}
    \end{enumerate}
\end{theorem}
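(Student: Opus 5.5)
The plan is to build every item from the basic adjunction between frames and spaces, $\Omega\colon \mathbf{Top}\to\Frm^{op}$ and $\mathrm{pt}\colon \Frm^{op}\to \mathbf{Top}$. Here $\mathrm{pt}(L)$ is the space of completely prime filters (equivalently, frame maps $L\to 2$), topologized by the sets $U_a=\{P \mid a\in P\}$. This adjunction restricts to a dual equivalence between spatial frames and sober spaces. Hence for each item it suffices to show three things: the frames in question are spatial, their spaces of points are exactly the listed spaces, and the morphism classes correspond. The remaining items are then obtained by restricting the first one.

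\textbf{Item (1).} If $L$ is continuous, then by \cref{lem:sep-by-SFilt} any $a\nleq b$ is separated by a Scott-open filter $\filter F$. Using a Zorn argument on the Scott-open filters avoiding $b$ (the Hofmann--Mislove style prime filter lemma), we enlarge $\filter F$ to a completely prime filter containing $a$ and not $b$. This gives spatiality.

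Conversely, $\Omega X$ is continuous exactly when the sober space $X$ is locally compact. The key fact is that in a locally compact sober space, $U\ll V$ iff $U\subseteq K\subseteq V$ for some compact saturated $K$. The Hofmann--Mislove theorem, which puts the compact saturated sets of a sober space in bijection with the Scott-open filters of $\Omega X$, yields this characterization.

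For morphisms, we must show that a frame map $f^{-1}\colon\Omega Y\to\Omega X$ is proper in the sense of preserving $\ll$ iff $f^{-1}(K)$ is compact for every compact saturated $K$. For the forward direction, write $K=\bigcap\{V \mid K\subseteq V\}$ as a filtered intersection and interpolate via \cref{lem:interpolating}; compactness of $f^{-1}(K)$ then follows from Hofmann--Mislove. The converse uses the characterization of $\ll$ just given. We expect this morphism correspondence to be the main obstacle, since it is where Hofmann--Mislove is used essentially.

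\textbf{Items (2) and (3).} Under the equivalence of (1), stability of $\ll$ translates through the characterization of $\ll$ into the condition that compact saturated sets are closed under binary intersection. Compactness of $L$ corresponds to compactness of $X$, since $1\ll 1$ iff $X$ is compact.

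\textbf{Items (4) and (5).} We restrict further to regular frames. A sober space whose frame of opens is regular is $T_3$ and $T_1$, hence Hausdorff. Conversely, a locally compact Hausdorff space is regular, so the restriction of (1) gives (4); properness agrees with the usual notion in this setting.

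For (5), compact regular frames are spatial by (4). Moreover, every frame homomorphism between compact regular frames is proper: in that setting $a\ll b$ iff $a\prec b$, and frame maps preserve $\prec$. Likewise every continuous map between compact Hausdorff spaces is proper. Hence the full category $\KRFrm$ is dual to $\KHaus$, recovering Isbell duality.
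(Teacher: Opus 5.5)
Your outline is correct and follows the route the paper indicates. The paper gives no proof of its own: it cites the standard references (Gierz et al., Johnstone) and remarks that all five dualities are restrictions of the $\Omega \dashv \pt$ dual adjunction, which is your decomposition, with Hofmann--Mislove carrying the object and morphism correspondences. The one step thinner than you suggest is the direction ``$\ll$ stable $\Rightarrow$ binary intersections of compact saturated sets are compact'' in (2): the characterization of $\ll$ alone does not give it, and you need Hofmann--Mislove (or well-filteredness of sober spaces) again, since stability makes the filter generated by two Scott-open filters Scott-open and its intersection is $K_1 \cap K_2$.
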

\begin{remark}
The duality in \cref{hofmann-lawson} is known as \emph{Hofmann--Lawson duality}~\cite{HL78}, and the duality in \cref{isbell-duality} as \emph{Isbell duality}~\cite{Isb72}. 
These, together with the other dual equivalences above, arise as restrictions of the well-known dual adjunction between \Frm and the category \Top of topological spaces and continuous maps (see, e.g., \cite[p.~44]{Joh82}), which we briefly recall.

The functor $\Omega : \Top \to \Frm$ maps a topological space $X$ to its frame of opens $\Omega(X)$ and a continuous map $f : X \to Y$ to the frame homomorphism $f^{-1} : \Omega(Y) \to \Omega(X)$. 
The functor $\pt : \Frm \to \Top$ maps a frame $L$ to its \emph{space of points} $\pt L$, where a \emph{point} of $L$ is a completely prime filter, and the opens of $\pt L$ are sets of the form
\[
    \varphi(a) = \{P \in \pt L \mid a \in P\}.
\]
A frame homomorphism $h : L \to M$ is mapped to the continuous map 
\[
    \pt(h) := h^{-1} : \pt M \to \pt L.
\]
An immediate consequence of Hofmann--Lawson duality is that continuous frames are \emph{spatial}, that is, isomorphic to $\Omega(X)$ for some topological space $X$.
\end{remark}

Following \cite[p.~2063]{Ern09}, we call a topological space $X$ \emph{compactly based} if it has a basis of compact opens. 
A subset $U$ of $X$ is \emph{clopen} if it is both closed and open, $X$ is \emph{zero-dimensional} if it has a basis of clopens, and $X$ is a \emph{Stone space} if it is compact, Hausdorff, and zero-dimensional. A space $X$ is \emph{locally Stone} if it is locally compact, Hausdorff, and zero-dimensional, i.e., the definition of locally Stone spaces is obtained from that of Stone spaces by replacing compactness with local compactness. 
A continuous map $f : X \to Y$ is \emph{coherent} if $f^{-1}(U)$ is compact for each compact open $U \subseteq Y$.  

\begin{definition}
    \leavevmode
    \begin{enumerate}
        \item Let \KBSob be the category of compactly based sober spaces and coherent maps.
        \item Let \SKBSp be the full subcategory of \KBSob consisting of those spaces in which the binary intersection of two compact opens is again compact.
        \item Let \Spec be the full subcategory of \SKBSp consisting of those spaces that are additionally compact.
        \item Let \LStone be the full subcategory of \LCSob consisting of locally Stone spaces.
        \item Let \Stone  be the category of Stone spaces and continuous maps.
    \end{enumerate}
\end{definition}

\begin{remark}
    \leavevmode
    \begin{enumerate}
        \item A continuous map between locally compact sober spaces is coherent iff it is proper (see, e.g., \cite[Rem.~2.9]{BM25}), and hence \KBSob is a full subcategory of \LCSob. 
        \item Since compact opens in Stone spaces are clopen, it is immediate that continuous maps between Stone spaces are coherent. 
        Thus, \Stone is a full subcategory of both \Spec and \LStone. 
        \item By a relativized argument, locally Stone spaces are stably compactly based, and hence \LStone is a full subcategory of \SKBSp.
    \end{enumerate}
\end{remark}

The dualities of \cref{thm:confrm-duality} restrict to the algebraic setting to yield the following dualities. 

\needspace{2em}
\begin{theorem} \label{thm:Alg-dualities}
    \leavevmode
    \begin{enumerate}[cref=theorem]
        \item \AlgFrm is dually equivalent to \KBSob.
        \item \AriFrm is dually equivalent to \SKBSp.
        \item \CohFrm is dually equivalent to \Spec.
        \item \LStoneFrm is dually equivalent to \LStone.\label{thm:Stone}
        \item \StoneFrm is dually equivalent to \Stone.\label{thm:LStone}
    \end{enumerate}
\end{theorem}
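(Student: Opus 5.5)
The plan is to obtain each item of \cref{thm:Alg-dualities} by restricting the corresponding duality of \cref{thm:confrm-duality} along the full inclusions of \cref{fig:con-and-alg}. We already know that \AlgFrm, \AriFrm, \CohFrm are full subcategories of \ConFrm, \StConFrm, \StKFrm, and that \KBSob, \SKBSp, \Spec, \LStone are full subcategories of \LCSob (coherent maps being exactly the proper maps), with \Stone full in \KHaus. Since a dual equivalence restricts to a dual equivalence between full subcategories as soon as the two functors $\Omega$ and $\pt$ carry objects of one subcategory into the other, the work reduces to object-level checks: for a continuous frame $L$ we need $L$ algebraic iff $\pt L$ is compactly based; for a locally compact sober space $X$ we need $X$ compactly based iff $\Omega(X)$ is algebraic. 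Morphisms then take care of themselves by fullness.

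The key object-level step runs as follows. For a locally compact sober space $X$, the compact elements of $\Omega(X)$ are precisely the compact open sets. Hence $\Omega(X)$ is algebraic exactly when every open set is a union of compact opens, that is, when $X$ is compactly based. Since $L\cong\Omega(\pt L)$ for continuous $L$, this settles item (1). For item (2), I would intersect with \cref{hofmann-lawson-2}: an algebraic frame is stably continuous iff it is arithmetic, i.e.\ iff $\K(L)$ is closed under binary meets. Under $\Omega$ this says exactly that the intersection of two compact opens is compact. Item (3) adds compactness of $L$, which corresponds to compactness of $X$.

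For items (4) and (5), I would restrict \cref{hofmann-lawson-LC} and \cref{isbell-duality} respectively. For a locally compact Hausdorff (or compact Hausdorff) space $X$, the complemented elements of $\Omega(X)$ are the clopens. So $\Omega(X)$ is zero-dimensional iff $X$ is zero-dimensional. This matches locally Stone frames with locally Stone spaces, and Stone frames with Stone spaces. Fullness of \LStoneFrm in \ConRFrm and of \LStone in \LCHaus (respectively of \StoneFrm in \KRFrm and of \Stone in \KHaus) again handles the morphisms.

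The only mildly delicate point is the identification of compact elements of $\Omega(X)$ with compact open subsets. This identification is immediate, because joins in $\Omega(X)$ are unions. So I expect no real obstacle; the main care needed is bookkeeping, making sure each restriction uses the matching pair of full subcategories.
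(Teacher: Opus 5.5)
Your proposal is correct and takes the same approach as the paper. The paper obtains \cref{thm:Alg-dualities} by restricting the dualities of \cref{thm:confrm-duality} along the full inclusions, citing Hofmann--Keimel and Banaschewski for the details, and your object-level checks supply what that restriction needs: compact elements of $\Omega(X)$ are the compact opens, complemented elements are the clopens, and $L\cong\Omega(\pt L)$ for continuous $L$. The only difference is that for items (4) and (5) the paper phrases it as restricting the algebraic dualities to regular frames, while you restrict \cref{hofmann-lawson-LC} and \cref{isbell-duality} to zero-dimensional frames and spaces; both routes work equally well.
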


\begin{remark}
    The dualities for \AlgFrm, \AriFrm, and \CohFrm go back to \cite[p.~52]{HK72} (see also \cite[p.~423]{GH+03}), while the dualities for \LStoneFrm and \StoneFrm arise by restricting 
    the above to the setting of regular frames. For the latter duality see \cite{Ban89}, 
    which has its roots in the celebrated Stone duality for boolean algebras \cite{Sto36}.
\end{remark}

\cref{fig:con-and-loc} summarizes the correspondences described in this section, where we use the following conventions:
\begin{itemize}
    \item \begin{tikzcd}\cat{A} \ar[r, <->, dual, cyanc] & \cat B\end{tikzcd} stands for ``$\cat A$ is dually equivalent to $\cat B$;''
    \item \begin{tikzcd}\cat{A} \ar[r, hook] & \cat B\end{tikzcd} stands for ``$\cat A$ is a full subcategory of $\cat B$.''
\end{itemize}

\begin{figure}[H]
    \centering
    \begin{tikzcd}[column sep={1.25cm, between origins}, row sep={.85cm, between origins}]
        \ConFrm  &&&&& \LCSob \\
        && \AlgFrm &&&&& \KBSob\\
        \StConFrm &&&&& \SLCSp \\
        && \AriFrm &&&&& \SKBSp\\
        & \ConRFrm &&&&& \LCHaus \\
        &&& \LStoneFrm &&&&& \LStone\\
        \StKFrm &&&&& \SKSp\\
        && \CohFrm &&&&& \Spec \\
        & \KRFrm &&&&& \KHaus\\
        &&& \StoneFrm &&&&& \Stone
        \ar[from=3-1, to=1-1, hook']
        \ar[from=3-6, to=1-6, hook']
        \ar[from=7-1, to=3-1, hook']
        \ar[from=7-6, to=3-6, hook']
        \ar[from=1-1, to=1-6, dual]
        \ar[from=3-1, to=3-6, dual]
        \ar[from=7-1, to=7-6, dual]
        \ar[from=2-3, to=1-1, hook']
        \ar[from=2-8, to=1-6, hook']
        \ar[from=4-3, to=3-1, hook']
        \ar[from=4-8, to=3-6, hook']
        \ar[from=4-3, to=2-3, hook', crossing over]
        \ar[from=4-8, to=2-8, hook']
        \ar[from=8-3, to=7-1, hook']
        \ar[from=8-8, to=7-6, hook']
        \ar[from=8-3, to=4-3, hook', crossing over]
        \ar[from=8-8, to=4-8, hook']
        \ar[from=2-3, to=2-8, dual, crossing over]
        \ar[from=4-3, to=4-8, dual, crossing over]
        \ar[from=8-3, to=8-8, dual]
        \ar[from=5-2, to=3-1, hook']
        \ar[from=5-7, to=3-6, hook', crossing over]
        \ar[from=9-2, to=7-1, hook']
        \ar[from=9-7, to=7-6, hook', crossing over]
        \ar[from=9-2, to=5-2, hook', crossing over]
        \ar[from=9-7, to=5-7, hook', crossing over]
        \ar[from=5-2, to=5-7, dual, crossing over]
        \ar[from=9-2, to=9-7, dual]
        \ar[from=6-4, to=4-3, hook', crossing over]
        \ar[from=6-9, to=4-8, hook', crossing over]
        \ar[from=6-4, to=5-2, hook', crossing over]
        \ar[from=6-9, to=5-7, hook', crossing over]
        \ar[from=10-4, to=9-2, hook']
        \ar[from=10-9, to=9-7, hook']
        \ar[from=10-4, to=8-3, hook', crossing over]
        \ar[from=10-9, to=8-8, hook']
        \ar[from=10-4, to=6-4, hook', crossing over]
        \ar[from=10-9, to=6-9, hook']
        \ar[from=6-4, to=6-9, dual, crossing over]
        \ar[from=10-4, to=10-9, dual]
    \end{tikzcd}
    \caption{Dual equivalences described in \cref{sec:Preliminaries}}\label{fig:con-and-loc}
\end{figure}

\section{Priestley duality for continuous and algebraic frames} \label{sec:Priestley-duality}

In this section, we recall Priestley duality for the categories of continuous and algebraic frames, as well as for their various subcategories described in the previous section.
As is customary, we call a subset $S$ of a poset $X$ an \emph{upset} if 
\[S = \upset S := \{x \in X \mid \exists s \in S : s \leq x\}\]
and a \emph{downset} if 
\[S = \downset S := \{x \in X \mid \exists s \in S : x \leq s\}.\]

\begin{definition}
    A \emph{Priestley space} is a Stone space $X$ equipped with a partial order $\leq$ satisfying \emph{Priestley separation}:   if $x,y \in X$ with $x \nleq y$ then there is a clopen upset $U$ of $X$ such that $x\in U$ and $y \notin U$.
\end{definition}

Let \Pries be the category of Priestley spaces and continuous order-preser\-ving maps. Let also \DLat be the category of bounded distributive lattices and bounded lattice homomorphisms.

\begin{theorem}[{\cite{Pri70,Pri72}}]
    \DLat is dually equivalent to \Pries.
\end{theorem}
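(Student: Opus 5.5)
The plan is the classical two-functor argument. Define contravariant functors in both directions, then verify that they are mutually quasi-inverse.

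First, for a bounded distributive lattice $D$, let $X_D$ be the set of prime filters of $D$, ordered by inclusion. Give it the topology generated by the sets $\sigma(a)=\{x \in X_D \mid a \in x\}$ and their complements $X_D\setminus\sigma(a)$ for $a \in D$. Then:
\begin{itemize}
\item $\sigma$ is a lattice homomorphism into the clopen upsets, so the subbasis consists of clopens and $X_D$ is zero-dimensional.
\item $X_D$ is Hausdorff: if $x\nsubseteq y$ there is $a\in x\setminus y$, and $\sigma(a)$ is a clopen upset separating them. This is also exactly Priestley separation.
\item $X_D$ is compact. This is the main technical step. Using Alexander's subbasis lemma, it suffices to treat a cover by sets $\sigma(a_i)$ and $X_D\setminus\sigma(b_j)$. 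If no finite subfamily covers, then the filter generated by the $b_j$ is disjoint from the ideal generated by the $a_i$. The prime filter theorem then yields a prime filter containing all $b_j$ and no $a_i$, which is an uncovered point.
\end{itemize}
A bounded lattice homomorphism $h$ is sent to $h^{-1}$ on prime filters. This map is order-preserving and continuous, since $(h^{-1})^{-1}(\sigma(a))=\sigma(ha)$.

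Conversely, a Priestley space $X$ is sent to the lattice $\mathrm{ClopUp}(X)$ of clopen upsets. A continuous order-preserving $f$ is sent to $f^{-1}$, which is clearly a bounded lattice homomorphism.

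For the natural isomorphisms, $\sigma : D \to \mathrm{ClopUp}(X_D)$ is injective by the prime filter theorem: if $a\nleq b$, separate $\uparrow a$ from $\downarrow b$. The main obstacle on the lattice side is surjectivity of $\sigma$. Let $U$ be a clopen upset. For each $x\in U$ and $y\notin U$ we have $x\nsubseteq y$, so some $\sigma(a)$ contains $x$ but not $y$. Fixing $y$ and using compactness of $U$, a finite join gives $\sigma(a_y)\supseteq U$ with $y\notin\sigma(a_y)$. Then compactness of $X\setminus U$ applied to the complements of the $\sigma(a_y)$ gives a finite meet $a$ with $\sigma(a)=U$.

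On the space side, define $\varepsilon: X \to X_{\mathrm{ClopUp}(X)}$ by $x \mapsto \{U \mid x\in U\}$, which is a prime filter. Then:
\begin{itemize}
\item $\varepsilon$ is an order embedding by Priestley separation, and continuous by construction.
\item $\varepsilon$ is surjective by the same compactness trick. Given a prime filter $P$, the closed set $\bigcap_{U\in P}U\cap\bigcap_{V\notin P}(X\setminus V)$ is nonempty by the finite intersection property, which follows from primeness. Any point in it maps to $P$.
\item $\varepsilon$ is a continuous bijection from a compact space to a Hausdorff space, hence a homeomorphism.
\end{itemize}
Naturality of $\sigma$ and $\varepsilon$ in each case is a routine check.
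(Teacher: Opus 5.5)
Your proposal is correct: it is the classical proof of Priestley duality, and it uses exactly the functors recorded in the Remark after the statement. The paper gives no proof of its own and simply cites Priestley; your subbasis $\{\sigma(a),\, X_D\setminus\sigma(a)\}$ generates the same topology as the paper's basis $\{\sigma(a)\setminus\sigma(b) \mid a,b\in D\}$, and the compactness and surjectivity arguments for $\sigma$ and $\varepsilon$ are sound as sketched.
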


\begin{remark}
    The dual equivalence is established by sending a bounded distributive lattice $D$ to its set of prime filters $\spec(D)$ ordered by inclusion and topologized by the basis ${\{\sigma(a) \setminus \sigma(b) \mid a,b \in D\}}$,
where 
\[\sigma(a) = \{x \in \spec(D) \mid a \in x\}.\]
A bounded lattice homomorphism $h : D \to E$ is sent to the continuous order-preserving map $h^{-1}:\spec(E) \to \spec(D)$. Conversely, a Priestley space $X$ is sent to its lattice of clopen upsets $\clopup(X)$ and a continuous order-preserving map $f : X \to Y$ 
to the bounded lattice homomorphism $f^{-1} : \clopup(Y) \to \clopup(X)$.
\end{remark}

We write $\opup(X)$ for the open upsets of a Priestley space $X$, and $\cl U$ for the topological closure of $U \subseteq X$.

\begin{definition}
\leavevmode
\begin{enumerate}
    \item An \emph{\L-space} is a Priestley space $X$ such that 
    \[
        U \in \opup(X) \implies \cl U \in \clopup(X).
    \]
    \item An \emph{\L-morphism} is a continuous order-preserving map $f : X \to Y$ between \L-spaces such that
    \[
        f^{-1}(\cl U) = \cl f^{-1}(U)
    \]
    for each $U \in \opup(Y)$.
\end{enumerate}
\end{definition}

Let \LPries be the category of \L-spaces and \L-morphisms. Priestley duality restricts to yield:

\begin{theorem}[Pultr--Sichler--Wigner duality, \cite{Wig79,PS88}]
    \Frm is dually equivalent to \LPries.
\end{theorem}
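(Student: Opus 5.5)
The plan is to obtain Pultr--Sichler--Wigner duality by restricting Priestley duality along the forgetful functor $\Frm \to \DLat$. On objects, a frame $L$ is sent to $X_L := \spec(L)$. The first task is to show that $X_L$ is an \L-space. Every open upset $U$ of $X_L$ is a union of basic clopen upsets, so $U = \bigcup_{a \in S} \sigma(a)$ for some $S \subseteq L$. I claim that
\[
\cl U = \sigma\bigl(\textstyle\bigvee S\bigr).
\]
The inclusion $\subseteq$ holds because $\sigma(\bigvee S)$ is clopen and contains each $\sigma(a)$ with $a \in S$.

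For $\supseteq$, take $x \in \sigma(\bigvee S)$ and a basic neighborhood $\sigma(b)\setminus\sigma(c)$ of $x$. Then $b \wedge \bigvee S = \bigvee_{a\in S}(b\wedge a)$, and this element is not $\le c$, since it lies in the prime filter $x$ while $c$ does not. Hence $b \wedge a \nleq c$ for some $a\in S$. By the prime filter theorem there is $y \in \spec(L)$ with $b\wedge a \in y$ and $c \notin y$, and this $y$ lies in $U \cap (\sigma(b)\setminus\sigma(c))$. So $\cl U$ is a clopen upset, and $X_L$ is an \L-space.

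On morphisms, a frame homomorphism $h$ yields $f = h^{-1}$. By the formula above,
\[
f^{-1}\bigl(\cl \textstyle\bigcup_{a \in S}\sigma(a)\bigr) = f^{-1}\sigma\bigl(\textstyle\bigvee S\bigr) = \sigma\bigl(h\textstyle\bigvee S\bigr) = \sigma\bigl(\textstyle\bigvee hS\bigr) = \cl\textstyle\bigcup_{a\in S} \sigma(ha) = \cl f^{-1}\bigl(\textstyle\bigcup_{a \in S}\sigma(a)\bigr),
\]
so $f$ is an \L-morphism.

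Conversely, let $X$ be an \L-space. I will show that $\clopup(X)$ is a frame. For a family $\{U_i\}$ of clopen upsets, $\cl\bigcup U_i$ is a clopen upset, and it is the least clopen upset containing every $U_i$, so it is their join. For the frame law, take a clopen upset $V$. Since $V$ is clopen, $V \cap \cl W = \cl(V\cap W)$ for any $W$, and therefore
\[
V \cap \cl\textstyle\bigcup U_i = \cl\textstyle\bigcup (V\cap U_i).
\]
For an \L-morphism $f$, the map $f^{-1}$ preserves these joins precisely by the \L-morphism condition, because $f^{-1}(\bigcup U_i) = \bigcup f^{-1}(U_i)$.

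Full faithfulness and essential surjectivity now follow from Priestley duality. A bounded lattice homomorphism between frames preserves arbitrary joins if and only if its dual map is an \L-morphism; both directions are the computations above, combined with the identification of joins as closures of unions. The unit isomorphisms of Priestley duality, $L \cong \clopup(\spec L)$ and $X \cong \spec(\clopup X)$, restrict to the two subcategories. The main obstacle I expect is the density step in the closure formula $\cl U=\sigma(\bigvee S)$, where the frame law and prime-filter separation have to be combined correctly. Everything else is formal.
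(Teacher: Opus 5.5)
Your proposal is correct and follows essentially the same route as the paper. The paper obtains the theorem by restricting Priestley duality along $\Frm \to \DLat$, and it relies on exactly your key identity $\sigma(\bigvee S)=\cl\bigcup\{\sigma(s)\mid s\in S\}$, which it attributes to \cite[Prop.~2]{Wig79}; your checks of the frame law in $\clopup(X)$ and of the correspondence between join preservation and the \L-morphism condition are sound.
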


For a poset $X$, we write $\min X$ and $\max X$ for the sets of minimal and maximal points of $X$, respectively.
Throughout we will use the following basic properties of \L-spaces 
(see, e.g., \cite[Thms.~3.1.2, 3.2.1, 3.2.3]{Esa19}). 

\begin{lemma} 
    Let $X$ be an \L-space.
    \begin{enumerate}[cref=lemma]
        \item 
        If $C \subseteq X$ is closed, then both $\upset C$ and $\downset C$ are closed.\label{lem:up-down-closed}
        \item 
        If $U \subseteq X$ is \textup{(}cl\textup{)}open, then $\downset U$ is \textup{(}cl\textup{)}open
        \label{lem:Esakia-space}
        \item 
        If $F \subseteq X$ is a closed upset, then $\max F$ is closed.
        \label{lem:closed-max}
        \item If $x \in C$ and $C \subseteq X$ is closed, then
        both $\downset x \cap \min C$ and $\upset x \cap \max C$ are nonempty. 
        \label{lem:min-max-nonempty}
        \item Each closed upset \textup{(}resp.~downset\textup{)} is a directed intersection of clopen upsets \textup{(}resp. downsets\textup{)}.\label{lem:intersections-of-clopens} 
    \end{enumerate}
\end{lemma}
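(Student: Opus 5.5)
The plan is to derive each item from the Priestley axioms. Priestley separation and compactness suffice for everything except \cref{lem:Esakia-space}, which is the only place where the defining \L-space condition $\cl U \in \clopup(X)$ enters.

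\textbf{Item (1).} I will argue by compactness. Let $C$ be closed and $y \notin \upset C$. For each $c \in C$ we have $c \nleq y$, so Priestley separation gives a clopen upset $W_c$ with $c \in W_c$ and $y \notin W_c$. Finitely many $W_c$ cover $C$, and their union $W$ is a clopen upset containing $\upset C$ and missing $y$. Hence $\upset C$ is closed. The argument for $\downset C$ is symmetric, using clopen downsets.

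\textbf{Item (5).} The same argument, applied to a closed upset $F$, shows that $F$ is the intersection of the clopen upsets containing it. This family is closed under finite intersections, so it is directed. The downset case is symmetric.

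\textbf{Item (4).} Let $C$ be closed and $x \in C$. Apply Zorn's lemma to $\upset x \cap C$. For a chain $\{c_i\}$ in this set, the closed sets $\upset c_i \cap C$ (closed by (1)) have the finite intersection property. By compactness their intersection is nonempty, and any point in it is an upper bound of the chain. So $\upset x \cap \max C \neq \varnothing$, and dually $\downset x \cap \min C \neq \varnothing$.

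\textbf{Item (2): the main step.} Since $\downset$ commutes with unions and the clopens form a basis, it suffices to treat a clopen $U$. Every clopen is a finite union of sets $A \setminus B$ with $A,B \in \clopup(X)$, so it suffices to show that $\downset(A \setminus B)$ is clopen. It is closed by (1). To see that it is open, I will identify its complement
\[
\{x \mid \upset x \cap A \subseteq B\}
\]
with the open upset
\[
V = \bigcup\{C \in \clopup(X) \mid C \cap A \subseteq B\}.
\]

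The inclusion $V \subseteq \{x \mid \upset x \cap A \subseteq B\}$ holds because each such $C$ is an upset. For the reverse inclusion, suppose $\upset x \cap (A \setminus B) = \varnothing$. By (5), $\upset x$ is a directed intersection of clopen upsets, and $A \setminus B$ is closed. Compactness then yields a single clopen upset $C \supseteq \upset x$ with $C \cap (A\setminus B) = \varnothing$, so $x \in V$.

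It remains to show that $V$ is closed, and here the \L-condition is used. Since $V$ is an open upset, $\cl V$ is a clopen upset. Because $A$ is open, $\cl V \cap A \subseteq \cl(V \cap A) \subseteq B$, using that $B$ is closed. Hence $\cl V$ is one of the sets in the union defining $V$, so $\cl V \subseteq V$ and $V$ is clopen. Therefore $\downset(A\setminus B)$ is clopen, and consequently $\downset U$ is (cl)open whenever $U$ is.

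\textbf{Item (3).} Let $F$ be a closed upset, and let $z \in F$ be non-maximal, say $z < y$ with $y \in F$. By Priestley separation choose a clopen upset $W$ with $y \in W$ and $z \notin W$. Then $z \in N_W := (X \setminus W) \cap \downset W$, which is open by (2).

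I claim every point of $F \cap N_W$ is non-maximal in $F$. If $w \in F \cap N_W$, then $w \leq v$ for some $v \in W$, and $v \in F$ because $F$ is an upset. Also $w \neq v$ since $w \notin W$, so $w < v$ in $F$. Thus the non-maximal points of $F$ form a relatively open subset of $F$, namely $F \cap \bigcup_W N_W$. Since $F$ is closed, $\max F$ is closed.

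I expect (2) to be the main obstacle, because it is the only point where the \L-space hypothesis is genuinely needed. Item (3) then depends on it.
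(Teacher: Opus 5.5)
Your proof is correct. It takes a different, more self-contained route than the paper, which gives no argument and simply cites Esakia's book. Those theorems are stated for Esakia spaces, so the paper implicitly relies on the fact that an \L-space is the Priestley dual of a frame, hence of a Heyting algebra, and is therefore an Esakia space.

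Your items (1), (4), (5) are the standard arguments, valid in any Priestley space. Your proof of (3) is the usual Esakia-space argument (non-maximal points are covered by the open sets $\downset W \setminus W$), run inside $F$ rather than first showing $\max X$ closed and intersecting with the upset $F$.

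The genuine difference is in (2). Instead of passing through Esakia duality, you derive the Esakia condition directly from the defining condition as stated in the paper:
\begin{itemize}
\item You identify the complement of $\downset(A\setminus B)$ with the open upset $V$, which is the dual of the Heyting implication $A\to B$.
\item You use $\cl V\cap A\subseteq \cl(V\cap A)$, the topological form of the frame distributive law.
\item Together with $\cl V\in\clopup(X)$, this yields $\cl V\subseteq V$.
\end{itemize}

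What your route buys:
\begin{itemize}
\item It uses only the paper's definition.
\item It pinpoints that the \L-condition is needed only for (2), and hence (3).
\item It shows that this definition, which never explicitly assumes the Esakia condition, already implies it.
\end{itemize}
Note that you genuinely use that $\cl V$ is an upset, which the paper's formulation $\cl U\in\clopup(X)$ provides. With only ``$\cl U$ is open'' you would need that separately.

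The citation route, in turn, is shorter. It places the lemma within Esakia duality, where (2) and (3) hold for all Esakia spaces, not just \L-spaces.
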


We view the \L-space of a frame $L$ as the bridge between $L$ and its space of points. Indeed, if $X$ is the \L-space of $L$, then $\pt L$ is realized as a distinguished subset of $X$: 

\begin{definition}
    Let $X$ be an \L-space.
    \begin{enumerate}
        \item A point $x \in X$ is called a \emph{localic point} if $\downset x$ is open.
        \item 
        The \emph{localic part} of $X$ is the set $\loc X$ of localic points of $X$.
    \end{enumerate}
\end{definition}

\begin{remark}\label{rem:iso-and-loc}
    By \cref{lem:Esakia-space}, every isolated point of an \L-space is localic, but the converse fails in general. For example, in the \L-space $X = \omega + 1$, the unique maximal point is localic but it is not isolated. However, if $x$ is a minimal point of an \L-space, 
    then $x$ is localic iff it is isolated. 
    In particular, if an \L-space happens to be an antichain, then the localic points are precisely the isolated points.
\end{remark}

If $X$ is the \L-space of a frame $L$, then $\pt L = \loc X$ (see, e.g., \cite[Prop.~2.9]{PS00}). The collection $$\{ U \cap \loc X \mid U \in \clopup(X) \}$$ is a topology on $\loc X$ such that 
the restriction of an \L-morphism $f : X \to Y$ is a well-defined continuous map $f|_{\loc X} : \loc X \to \loc Y$. Therefore, the assignment $X \mapsto \loc X$ defines a functor $\loc : \LPries \to \Top$. 
The localic part of an \L-space can be used to characterize spatiality and compactness:

\begin{theorem}
    Let $L$ be a frame and $X$ its \L-space. 
    \begin{enumerate}[cref=theorem]
        \item {\em(\cite[pp.~231--232]{PS00}; see also {\cite[Thm.~5.5]{AB+20}})}
        \label{thm:spatiality} 
        $L$ is spatial iff $\loc X$ is dense in~$X$. 
        \item {\em(\cite[Lem.~3.1]{BGJ16})} 
        \label{thm:compactness}
        $L$ is compact iff $\min X \subseteq \loc X$. 
    \end{enumerate}
\end{theorem}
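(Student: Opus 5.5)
We use the standard description of the \L-space $X$ of $L$: $X=\spec(L)$, $L\cong\clopup(X)$ via $\sigma$, and the localic points are exactly the completely prime filters, so $\loc X=\pt L$. We also use the basic \L-space fact that, for $S\subseteq L$, the closure of the open upset $\bigcup\{\sigma(s)\mid s\in S\}$ is the clopen upset $\sigma(\bigvee S)$. This is what lets joins in $L$ be read off as closures in $X$.

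For \cref{thm:spatiality}, the plan is to compare density with the separation property defining spatiality.
\begin{enumerate}
\item The sets $\sigma(a)\setminus\sigma(b)$ with $a,b\in L$ form a basis of $X$.
\item So $\loc X$ is dense iff every nonempty $\sigma(a)\setminus\sigma(b)$ contains a localic point.
\item By the prime filter theorem, $\sigma(a)\setminus\sigma(b)\neq\varnothing$ iff $a\nleq b$.
\item Hence density says: whenever $a\nleq b$, there is a completely prime filter $P$ with $a\in P$ and $b\notin P$.
\end{enumerate}
This is precisely spatiality of $L$, i.e.\ injectivity of $\varphi$, which gives the equivalence.

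For \cref{thm:compactness}, we first characterize localic points. For $x\in X$, put
\[
c_x=\bigvee (L\setminus x), \qquad U=\bigcup\{\sigma(b)\mid b\notin x\},
\]
so that $\cl U=\sigma(c_x)$.
\begin{enumerate}
\item If $c_x\notin x$, then $X\setminus\sigma(c_x)$ is an open set containing $x$. Any $y$ in it contains no $b\notin x$, so $y\subseteq x$. Hence it equals $\downset x$, and $x$ is localic.
\item Conversely, if $x$ is localic, then $x$ is completely prime, so $c_x\notin x$.
\end{enumerate}
Thus $x$ is localic iff $c_x\notin x$, i.e.\ iff $x$ is completely prime.

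Now suppose $L$ is compact and $x\in\min X$. We use the standard characterization of minimal prime filters in a distributive lattice: $a\in x$ iff $a\vee b=1$ for some $b\notin x$. Suppose, towards a contradiction, that $c_x\in x$.
\begin{enumerate}
\item Choose $b\notin x$ with $c_x\vee b=1$.
\item Since $b\le c_x$, this gives $\bigvee(L\setminus x)=1$.
\item Compactness yields a finite $T\subseteq L\setminus x$ with $\bigvee T=1$.
\item Since $L\setminus x$ is an ideal, $1\notin x$, which is a contradiction.
\end{enumerate}
So $c_x\notin x$, and $x$ is localic.

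Conversely, suppose $\min X\subseteq\loc X$ and $1=\bigvee S$ with no finite subjoin equal to $1$.
\begin{enumerate}
\item The ideal generated by $S$ is proper.
\item The prime filter theorem gives a prime filter $y$ disjoint from $S$.
\item By \cref{lem:min-max-nonempty} applied to $C=X$, there is a minimal $x\le y$. It is still disjoint from $S$ and is localic by hypothesis, hence completely prime.
\item But $\bigvee S=1\in x$, so complete primeness forces $S\cap x\neq\varnothing$, a contradiction.
\end{enumerate}
Hence $1$ is compact.

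The main obstacle is justifying the minimal-prime-filter characterization, which the compactness direction relies on. We would prove it by Zorn's lemma: if no $b\notin x$ satisfies $a\vee b=1$, then the filter generated by $\{a\vee b\mid b\notin x\}$ misses $a$. Extending it to a prime filter avoiding $a$ produces a prime filter strictly below $x$, contradicting minimality.
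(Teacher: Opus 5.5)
Your outline is sound. Part (1) is correct given $\loc X=\pt L$, which the paper also takes as known; the paper itself gives no proof of this theorem and only quotes it from the literature. Your argument that $c_x\notin x$ forces $\downset x=X\setminus\sigma(c_x)$ to be open is correct. The converse direction of part (2) is fine as written.

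The problem is your justification of the minimal-prime-filter characterization, which is exactly the step you flagged, and as written it fails. The filter generated by $\{a\vee b\mid b\notin x\}$ always contains $a=a\vee 0$, because $0\notin x$. Moreover, no prime filter can contain every $a\vee b$ with $b\notin x$ while omitting $a$: primeness would force it to contain every $b\notin x$, including $0$. You have swapped the roles of filter and ideal. What you want is the ideal
\[
I=\{c\in L\mid c\le a\vee b \text{ for some } b\notin x\},
\]
which is the ideal generated by $(L\setminus x)\cup\{a\}$, using that $L\setminus x$ is an ideal. Your hypothesis that no $b\notin x$ satisfies $a\vee b=1$ says exactly that $1\notin I$. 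So the prime filter theorem yields a prime filter $y$ disjoint from $I$. Then $y\subseteq x$, because $y$ misses $L\setminus x$, and $a\notin y$, so $y\subsetneq x$, contradicting minimality. With this repair the forward direction of (2), and hence the whole proof, goes through.

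You can also avoid $c_x$ and the minimal-prime lemma altogether in part (2), using only tools the paper already quotes.
\begin{itemize}
\item By \cref{lem:ll-char} with $U=V=X$, $L$ is compact iff every open upset $W$ with $\cl W=X$ equals $X$.
\item For $x\in\min X$, the set $X\setminus\{x\}$ is an open upset. It is dense iff $\{x\}=\downset x$ is not open, i.e., iff $x\notin\loc X$. So compactness gives $\min X\subseteq\loc X$.
\item Conversely, suppose $\min X\subseteq\loc X$ and $\cl W=X$. For each $x\in\min X$, the open set $\downset x$ meets $W$, so $x\in W$ because $W$ is an upset. Hence $W\supseteq\upset\min X=X$ by \cref{lem:min-max-nonempty}.
\end{itemize}
This route is shorter and stays entirely on the \L-space side. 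Your algebraic route has the merit of making explicit that a point $x$ is localic exactly when $\bigvee(L\setminus x)\notin x$.
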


In light of \cref{thm:spatiality}, we call an \L-space $X$
\begin{itemize}
    \item \emph{\L-spatial} or a \emph{spatial \L-space} if $\loc X$ is dense in $X$;
    \item \emph{\L-compact} or a \emph{compact \L-space} if $\min X \subseteq \loc X$.
\end{itemize}

\cref{thm:compactness} can be relativized to each element of $L$. Thus, $a \in L$ is compact iff ${\min \sigma(a) \subseteq \loc X}$ (see, e.g., \cite[Cor.~5.4]{BM22}). 
Closed upsets with this property correspond to  
Scott-open
filters of $L$.

\begin{definition}[{\cite[Def.~5.2]{BM22}}]\label{def:Scott-upset}
    Let $X$ be an \L-space. 
    A closed upset $F \subseteq X$ is a \emph{Scott upset} provided $\min F \subseteq \loc X$.
    We write $\SUp(X)$ for the set of Scott upsets and $\clopsup(X)$ for the set of clopen Scott upsets of $X$.
\end{definition}

Writing $\SFilt(L)$ for the set of Scott-open filters of a frame $L$, we have: 

\begin{theorem}[{\cite[Thm.~5.6]{BM22}}]\label{thm:HM-Priestley}
    Let $L$ be a frame and $X$ its \L-space. 
    Then $(\SFilt(L),\subseteq)$ is isomorphic to $(\SUp(X),\supseteq)$.
\end{theorem}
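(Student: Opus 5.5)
We obtain the isomorphism in \cref{thm:HM-Priestley} by passing through the closed upsets of $X$. By Priestley duality, each filter $\filter{F}$ of $L$ corresponds to the closed upset
\[
    F_{\filter{F}} = \bigcap\{\sigma(a) \mid a \in \filter{F}\}.
\]
Conversely, each closed upset $F$ corresponds to the filter
\[
    \filter{F}_F = \{a \in L \mid F \subseteq \sigma(a)\}.
\]
These maps form a dual order-isomorphism between filters of $L$ ordered by inclusion and closed upsets ordered by reverse inclusion. The inverse relationship follows from \cref{lem:intersections-of-clopens} together with compactness: $F \subseteq \sigma(a)$ holds iff $\sigma(a)$ contains some finite intersection of the $\sigma(b)$'s, and the latter are exactly the clopen upsets. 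It therefore suffices to show that $\filter{F}$ is Scott-open iff $\min F_{\filter{F}} \subseteq \loc X$.

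\emph{Scott-open implies Scott upset.} Suppose $\filter{F}$ is Scott-open and $x \in \min F$, where $F = F_{\filter{F}}$. We show $x$ is localic, i.e.\ $\downset x$ is open. First we show that $\downset x \cap F = \{x\}$ is ``cut out'' by clopen downsets. For each $y \notin \downset x$, Priestley separation gives a clopen upset $\sigma(a)$ containing $y$ with $x \notin \sigma(a)$. Let $b = \bigvee\{a \mid x \notin \sigma(a)\}$. Since joins in $L$ correspond to $\cl\bigcup\sigma(a)$, the set $X \setminus \downset x$ is the union of these $\sigma(a)$. We must show this union is already clopen, which amounts to $x \notin \sigma(b)$.

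Suppose instead $x \in \sigma(b)$. Then $b \in \filter{F}$: since $x$ is minimal in $F$ and $\sigma(b)$ is a clopen upset, a compactness argument on $F \setminus \sigma(b)$ using minimality should give $F \subseteq \sigma(b)$. This step is the main obstacle, and needs care; I would try to handle it by working with $\min F$ and \cref{lem:min-max-nonempty}. Now $b$ is a directed join of finite joins of such $a$'s, so Scott-openness (\cref{lem:SFilt-finite-joins}) yields finitely many $a_1,\dots,a_n$ with $a_1\vee\dots\vee a_n \in \filter{F}$. Hence $x \in \sigma(a_1)\cup\dots\cup\sigma(a_n)$, contradicting the choice of the $a_i$. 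Thus $\downset x = X \setminus \sigma(b)$ is clopen, so $x$ is localic.

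\emph{Scott upset implies Scott-open.} Suppose $\min F \subseteq \loc X$ and $\bigvee S \in \filter{F}$ with $S$ directed. Then $F \subseteq \sigma(\bigvee S) = \cl\bigcup_{s\in S}\sigma(s)$. For $x \in \min F$, the set $\downset x$ is an open neighborhood of $x$, and $x$ lies in the closure of $\bigcup\sigma(s)$. Hence some $y \le x$ lies in some $\sigma(s)$, and since $\sigma(s)$ is an upset, $x \in \sigma(s)$. Thus the open sets $\sigma(s)$ cover $\min F$, and since every point of $F$ lies above a point of $\min F$ (\cref{lem:min-max-nonempty}), they cover $F$. Compactness of $F$ and directedness of $S$ give a single $s$ with $F \subseteq \sigma(s)$, i.e.\ $s \in \filter{F}$.
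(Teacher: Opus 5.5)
Your proof is correct, and it takes the route the paper indicates: the paper gives no proof of its own, citing \cite{BM22} and noting that the result comes from restricting the $\Phi$/$\Sigma$ isomorphism between filters and closed upsets. You restrict that isomorphism and check that $\filter{F}$ is Scott-open iff $\min F\subseteq\loc X$. Your second direction is fine as written.

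The step you flag as the main obstacle in the first direction is immediate and needs neither compactness nor \cref{lem:min-max-nonempty}. Since $x\in\min F$, no $y\in F\setminus\{x\}$ lies below $x$. Hence $F\setminus\{x\}\subseteq X\setminus\downset x\subseteq\sigma(b)$, and together with the assumption $x\in\sigma(b)$ this gives $F\subseteq\sigma(b)$, i.e.\ $b\in\filter{F}$. You had already recorded $F\cap\downset x=\{x\}$; this is exactly what closes the step.

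A small simplification: the set $\{a\mid x\notin\sigma(a)\}$ contains $0$ and is closed under finite joins, because $\sigma(a_1\vee a_2)=\sigma(a_1)\cup\sigma(a_2)$. So it is already directed, and Scott-openness applies to it directly. You need neither the ``finite joins'' detour nor \cref{lem:SFilt-finite-joins}, which the paper states only for continuous frames, whereas here $L$ is arbitrary.
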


\begin{remark}
    \leavevmode
    \begin{enumerate}[cref=remark]
        \item 
        \cref{thm:HM-Priestley} is obtained by restricting the well-known isomorphism between the filters $\Filt(L)$ of $L$ and the closed upsets $\clup(X)$ of $X$ (see, e.g., \cite[p.~54]{Pri84}),
        which is given by 
        \[
            \filter{F} \in \Filt(L) \mapsto \Sigma(\filter{F}) := \bigcap\{\sigma(a) \mid a \in \filter{F} \} \in \clup(X) 
        \]
        and its inverse
        \[
            F \in \clup(X) \mapsto \Phi(F) := \{a \in L \mid F \subseteq \sigma(a)\} \in \Filt(L).
        \] \label{rem:closed-upsets-and-filters}
        
        \item In \cite{BM22}, it is shown that Scott upsets of $X$ correspond to compact saturated sets of the localic part of $X$, yielding a Priestley perspective on the Hofmann--Mislove Theorem. 
    \end{enumerate}    
\end{remark}

We next describe Priestley duals of continuous and algebraic frames. We start by characterizing the way-below relation of a frame in terms of its \L-space.

\begin{lemma}[{\cite[Prop.~3.6]{PS88}}]\label{lem:ll-char}
Let $L$ be a frame and $X$ its \L-space. Identifying $L$ with $\clopup(X)$, we have $U \ll V$ iff
\[
    \forall W \in \opup(X), \, V \subseteq \cl W \implies U \subseteq W.
\]
\end{lemma}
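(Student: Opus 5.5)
The plan is to reduce everything to the description of joins in $\clopup(X)$ and then use compactness of clopen sets in the Stone space $X$. The preliminary observation is this: for any family $\mathcal S \subseteq \clopup(X)$, the join of $\mathcal S$ in $\clopup(X)$ is $\cl\bigcup\mathcal S$. To see it, note that $W := \bigcup\mathcal S$ is an open upset. Since $X$ is an \L-space, $\cl W \in \clopup(X)$, and it clearly contains every member of $\mathcal S$. Conversely, any clopen upset containing all members of $\mathcal S$ contains $W$. Being closed, it then contains $\cl W$. So $\cl W$ is the least upper bound. We also use the standard Priestley fact that every open upset is the union of the clopen upsets it contains, which follows from Priestley separation and compactness.

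\emph{($\Rightarrow$)} Assume $U \ll V$, and let $W \in \opup(X)$ with $V \subseteq \cl W$. Let $\mathcal S = \{C \in \clopup(X) \mid C \subseteq W\}$, so that $\bigcup\mathcal S = W$. By the observation, $\bigvee\mathcal S = \cl W \supseteq V$. Since $U \ll V$, there is a finite $\mathcal T \subseteq \mathcal S$ with $U \subseteq \bigvee\mathcal T$. A finite union of clopen upsets is already a clopen upset, so $\bigvee\mathcal T = \bigcup\mathcal T \subseteq W$. Hence $U \subseteq W$.

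\emph{($\Leftarrow$)} Assume the condition, and suppose $V \subseteq \bigvee\mathcal S$ for some $\mathcal S \subseteq \clopup(X)$. Put $W = \bigcup\mathcal S \in \opup(X)$. By the observation, $V \subseteq \cl W$, so the hypothesis gives $U \subseteq W = \bigcup\mathcal S$. Now $U$ is clopen in the compact space $X$, hence compact. Therefore finitely many members $\mathcal T \subseteq \mathcal S$ already cover $U$, and $U \subseteq \bigcup\mathcal T = \bigvee\mathcal T$. This shows $U \ll V$.

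The only step requiring care is the identification of arbitrary joins in $\clopup(X)$ with closures of unions. This is exactly where the \L-space axiom is used, and the rest is routine. I do not expect a serious obstacle.
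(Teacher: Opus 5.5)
Your proof is correct. Both directions go through as written, using three facts: joins in $\clopup(X)$ are closures of unions (the description $\sigma(\bigvee S)=\cl\bigcup\{\sigma(s)\mid s\in S\}$ that the paper also records), open upsets are unions of the clopen upsets they contain, and clopen sets of $X$ are compact. The paper gives no proof of its own and simply cites Pultr--Sichler; your argument is the standard one behind that reference.
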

Let $X$ be an \L-space and $Y, Z \subseteq X$. 
Motivated by \cref{lem:ll-char}, we write $Y \ll Z$ provided $Z \subseteq \cl W$ implies $Y \subseteq W$ for each $W \in \opup(X)$. 

\begin{remark}
\leavevmode
\begin{enumerate}[cref=remark]
    \item \label{Scott-upsets-property}
    With the above notation, the Scott upsets are precisely the closed upsets $F \subseteq X$ such that $F \ll F$ 
(see, e.g., \cite[Lem.~5.1]{BM22}).
    \item \label{rem:loc-X-and-open-upsets}
    Let $x \in \loc X$. 
    \cref{Scott-upsets-property} implies that for each $U \in \opup(X)$, from $x \in \cl U$ it follows that $x \in U$. Indeed, $x \in \loc X$ implies that $\upset x \in \SUp(X)$. Therefore, since $\cl U$ is an upset 
    (see, e.g., \cite[Thm.~3.1.2(VI)]{Esa19}), 
    \[
    x \in \cl U \implies \upset x \subseteq \cl U \implies \upset x \subseteq U \implies x \in U.
    \]
\end{enumerate}

\end{remark}

\begin{definition}\leavevmode
    \begin{enumerate}
        \item An \L-space $X$ is \emph{\L-continuous} or a \emph{continuous \L-space} if the \emph{continuous kernel}
    \[
        \con U := \bigcup\{V \in \clopup(X) \mid V \ll U\}
    \]
    is dense in $U$ for each $U \in \clopup(X)$.
    \item A continuous \L-space $X$ is \emph{\L-stably continuous} or a \emph{stably continuous \L-space} if 
    \[
        \con (U \cap V) = \con U \cap \con V
    \]
    for all $U,V \in \clopup(X)$.
    \item A stably continuous \L-space $X$ is \emph{\L-stably compact} or a \emph{stably compact \L-space} if it is \L-compact.
    \item An \L-morphism $f : X \to Y$ is \emph{\L-proper} if
    \[
        f^{-1}(\con U) = \con f^{-1}(U)
    \]
    for each $U \in \opup(Y)$.
    \end{enumerate}
\end{definition}

Let \ConLPries be the category of continuous \L-spaces and proper \L-morphisms. Let also \StConLPries be the full subcategory of \ConLPries consisting of stably continuous \L-spaces, and \StKLPries the full subcategory of \StConLPries consisting of stably compact \L-spaces.

\begin{theorem}[{\cite[Sects.~5 and 6]{BM23}}] \label{thm:ConLP-dualities}
\leavevmode
    \begin{enumerate}
        \item 
        \ConLPries is equivalent to \LCSob and dually equivalent to \ConFrm.
        \item 
        \StConLPries is equivalent to \SLCSp and dually equivalent to \StConFrm.
        \item 
        \StKLPries is equivalent to \SKSp and dually equivalent to \StKFrm.
    \end{enumerate}
\end{theorem}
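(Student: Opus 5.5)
The plan is to obtain all three statements by restricting Pultr--Sichler--Wigner duality, and then composing with the dualities of \cref{thm:confrm-duality}. I would prove the dual equivalences with frames first; the equivalences with spaces then follow by composing $\ConLPries \to \ConFrm^{\mathrm{op}}$ with Hofmann--Lawson duality. On objects, the resulting functor to spaces should be $X \mapsto \loc X$, since $\pt L = \loc X$.

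\emph{Objects.} Let $L$ be a frame with \L-space $X$, and identify $L$ with $\clopup(X)$ via $a \mapsto \sigma(a)$. By \cref{lem:ll-char}, $V \ll U$ in $L$ iff $V \ll U$ in the Priestley sense. Hence $\con U$ is the union of the clopen upsets corresponding to elements way below $U$. The join in $L$ of a family of clopen upsets is the closure of their union; this is the standard description of joins in $\clopup(X)$ for an \L-space. So $U = \bigvee\{V \mid V \ll U\}$ holds iff $\cl(\con U) = U$, that is, iff $\con U$ is dense in $U$. Therefore $L$ is continuous iff $X$ is \L-continuous.

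For stability, note that $\con(U\cap V) \subseteq \con U \cap \con V$ always holds. Conversely, suppose $x \in \con U\cap \con V$, say $x \in W_1 \ll U$ and $x \in W_2 \ll V$. If $L$ is stable, then $W_1\cap W_2 \ll U$ and $W_1\cap W_2 \ll V$, hence $W_1\cap W_2 \ll U\cap V$, so $x \in \con(U\cap V)$. The reverse implication needs more care. Assume $\con(U\cap V)=\con U\cap\con V$ and $W\ll U$, $W\ll V$. By interpolation (\cref{lem:interpolating}) and compactness of $W$ as a closed subset, $W$ is covered by $\con(U\cap V)$, hence by finitely many clopen upsets way below $U\cap V$. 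Their finite join is still way below $U \cap V$, so $W \ll U\cap V$.

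Compactness of $L$ corresponds to $\min X\subseteq\loc X$ by \cref{thm:compactness}. This gives (3) once (2) is in place.

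\emph{Morphisms.} A frame homomorphism $h$ corresponds to an \L-morphism $f$ with $h = f^{-1}$ on clopen upsets. If $h$ is proper, then $V \ll U$ implies $f^{-1}V \ll f^{-1}U$, which gives $f^{-1}(\con U)\subseteq \con f^{-1}(U)$. The reverse inclusion, and the converse direction, are where I expect the main obstacle. The definition of \L-proper quantifies over all open upsets, while properness only speaks about clopen ones. I would handle this as follows.
\begin{itemize}
\item First reduce to clopen $U$: every open upset is a union of clopen upsets, and $\con$ commutes with directed unions. This uses the fact that $W \ll \bigcup_i U_i$ for $W$ clopen forces $W\ll U_i$ for some $i$, by compactness of $W$.
\item Then use that $h$ is proper iff $h$ maps each Scott-open filter-shaped set $\twoheaduparrow a$ appropriately. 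Equivalently, pass through \cref{thm:HM-Priestley} and \cref{lem:round-filters}: a point $y\in\con f^{-1}(U)$ lies in some $W\ll f^{-1}(U)$, and one must produce $V\ll U$ with $y\in f^{-1}V$. To do this, take a Scott-open filter separating $f(y)$ from the complement, using \cref{lem:sep-by-SFilt} in the continuous frame, and use the roundness given by \cref{lem:round-filters}.
\end{itemize}

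Once objects and morphisms match, the restricted functors are full and faithful and essentially surjective, because the PSW functors are. Composing with \cref{thm:confrm-duality} gives the three equivalences with \LCSob, \SLCSp and \SKSp.
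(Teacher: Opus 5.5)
Your object-level arguments are fine: continuity, stability and \L-compactness transfer as you say. In the stability step you do not even need interpolation, since $W\ll U$ already gives $W\subseteq\con U$.

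The gap is in the morphism part, which is what makes the restricted functors well defined. From properness of $h=f^{-1}$ you only get $f^{-1}(\con U)\subseteq\con f^{-1}(U)$. The reverse inclusion is left as a sketch, and the implication ``$f$ \L-proper $\Rightarrow$ $h$ proper'' is never addressed. The sketch also does not work as stated. \cref{lem:sep-by-SFilt} separates two elements, and no argument that uses only $f(y)\in U=\sigma(u)$ can succeed. Indeed, any point of $U\setminus\con U$ (nonempty unless $u$ is compact) is a prime filter containing $u$ but no $a\ll u$. The hypothesis $y\in W\ll f^{-1}(U)$ has to be used, and your plan does not say how.

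Both missing pieces are short.

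\emph{Reverse inclusion.} Write $W=\sigma(w)$ with $w\ll h(u)$ and $w\in y$. Since $L$ is continuous, $h(u)=\bigvee\{h(a)\mid a\ll u\}$ is a directed join, so $w\le h(a)$ for some $a\ll u$. Then $h(a)\in y$, i.e.\ $f(y)\in\sigma(a)\subseteq\con U$. This uses no properness at all: $\con f^{-1}(U)\subseteq f^{-1}(\con U)$ holds for every \L-morphism into a continuous \L-space, and properness is exactly the other inclusion. Your filter idea can be made to work, but the Scott-open filter must live in $M$, between $h(u)$ and $\upset w$. One choice is $\Phi(G)$ for a Scott upset $W\subseteq G\subseteq f^{-1}(U)$ given by \cref{lem:Scott-inbetween}. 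You then pull it back along $h$ and apply \cref{lem:round-filters}.

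\emph{Converse.} If $f$ is \L-proper and $a\ll u$, then $\sigma(h(a))=f^{-1}\sigma(a)\subseteq f^{-1}(\con U)=\con\sigma(h(u))$. Compactness of $\sigma(h(a))$ then gives $h(a)\ll h(u)$, exactly as in your stability argument.

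\emph{Open upsets.} It is simplest to note that $V\ll U$ iff $V\ll\cl U$, so $\con U=\con(\cl U)$. Together with $f^{-1}(\cl U)=\cl f^{-1}(U)$, this reduces everything to the clopen case. Your directed-union reduction also works, but ``$W\ll\bigcup_i U_i\Rightarrow W\ll U_i$ for some $i$'' needs interpolation, not only compactness of $W$.

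With these repairs your route is a complete proof. The paper gives no argument of its own and cites \cite{BM23}, where these results are obtained from Priestley duality so that Hofmann--Lawson duality is recovered rather than assumed. Composing with \cref{thm:confrm-duality}, as you do, is legitimate here because the paper states that theorem beforehand.
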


\begin{remark} \label{L-cts-implies-spatial}
    Since every continuous frame is spatial, as a consequence of the above we obtain that every continuous \L-space is \L-spatial. This will be used frequently in later sections.
\end{remark}

The equivalences of \cref{thm:ConLP-dualities} further restrict to the setting of continuous regular frames.

\begin{definition}
    An \L-space $X$ is \emph{\L-regular} or a \emph{regular \L-space} if the \emph{regular kernel}
    \[
        \reg U := \bigcup \{V \in \clopup(X) \mid \downset V \subseteq U\}
    \]
    is dense in $U$ for each $U \in \clopup(X)$. 
\end{definition}

Let \ConRLPries be the full subcategory of \StConLPries consisting of continuous regular \L-spaces, and let \KRLPries be the full subcategory of \LPries consisting of compact regular \L-spaces. 
Every compact regular \L-space is \L-stably compact and every \L-morphism between compact regular \L-spaces is \L-proper, so \KRLPries is a full subcategory of \StKLPries (see, e.g., \cite[Cor.~7.20]{BM23}). 
Since a frame is regular iff its \L-space is \L-regular (see, e.g., \cite[Lem.~3.6]{BGJ16}), 
\cref{thm:ConLP-dualities} restricts to yield:

\begin{theorem}[{\cite[Cors.~7.8 and 7.12]{BM23}}]
\leavevmode
    \begin{enumerate}[cref=theorem]
        \item \label{thm:ConRLPries}
        \ConRLPries is equivalent to \LCHaus and dually equivalent to \ConRFrm. 
        \item \KRLPries is equivalent to \KHaus and dually equivalent to \KRFrm. \label{thm:KRLPries}
    \end{enumerate}
\end{theorem}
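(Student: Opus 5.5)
The plan is to obtain both parts by restricting dual equivalences already available, and then composing with the classical dualities of \cref{thm:confrm-duality}. The general principle is this. If $\mathsf{F} : \cat A \to \cat B^{\mathrm{op}}$ is a (dual) equivalence and $\cat A' \subseteq \cat A$, $\cat B' \subseteq \cat B$ are full subcategories, then $\mathsf{F}$ restricts to a dual equivalence $\cat A' \to \cat B'^{\mathrm{op}}$ as soon as two things hold: $\mathsf{F}$ sends objects of $\cat A'$ into $\cat B'$, and every object of $\cat B'$ is isomorphic to the image of an object of $\cat A'$. Fullness takes care of the morphisms automatically, so only the object-level correspondence has to be checked.

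For the first part, I would start from the dual equivalence between \StConFrm and \StConLPries in \cref{thm:ConLP-dualities}(2). It sends a frame $L$ to its \L-space $X$, and sends $X$ back to $\clopup(X)\cong L$. \ConRFrm is full in \ConFrm, and hence in \StConFrm by \cref{conreg-is-stably}. \ConRLPries is by definition full in \StConLPries, with proper \L-morphisms as its morphisms, matching proper frame homomorphisms. The object-level check is exactly the cited fact that a frame is regular iff its \L-space is \L-regular \cite[Lem.~3.6]{BGJ16}. Given $L \in \ConRFrm$, its \L-space is stably continuous and \L-regular. Conversely, given $X \in \ConRLPries$, $\clopup(X)$ is stably continuous and regular. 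This yields the dual equivalence between \ConRLPries and \ConRFrm. Composing it with the dual equivalence between \ConRFrm and \LCHaus of \cref{hofmann-lawson-LC} then gives the equivalence between \ConRLPries and \LCHaus.

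For the second part, the cleanest route avoids the proper-morphism bookkeeping altogether. \KRFrm is a full subcategory of \Frm, and \KRLPries is a full subcategory of \LPries, with all frame homomorphisms and all \L-morphisms respectively. So I would restrict Pultr--Sichler--Wigner duality directly. On objects, $L$ is compact iff $\min X \subseteq \loc X$ by \cref{thm:compactness}, and $L$ is regular iff $X$ is \L-regular by the same cited fact. Hence compact regular frames correspond exactly to compact regular \L-spaces. Composing with Isbell duality (\cref{isbell-duality}) gives the equivalence between \KRLPries and \KHaus. Alternatively, this part can be read off from the first part together with the cited fact that \KRLPries is full in \StKLPries.

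I expect no real obstacle, since everything reduces to the two object-level characterizations (regularity and compactness transferring along the duality), which are available from the literature. The only point requiring care is the first part. There one must make sure that the morphisms in the full subcategories on the two sides are the proper ones, so that the restriction takes place inside \StConFrm\ / \StConLPries rather than inside \Frm\ / \LPries.
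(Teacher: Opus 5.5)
Your proposal is correct. For part (1) it is the argument the paper gives in the paragraph preceding the statement: restrict the \StConFrm--\StConLPries duality of \cref{thm:ConLP-dualities} using \cref{conreg-is-stably} and the transfer of regularity \cite[Lem.~3.6]{BGJ16}, then compose with \cref{hofmann-lawson-LC}.

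For part (2), the paper takes the route you list as your alternative. It uses \cite[Cor.~7.20]{BM23} to regard \KRLPries as a full subcategory of \StKLPries: compact regular \L-spaces are \L-stably compact, and every \L-morphism between them is \L-proper. It then restricts the \StKFrm--\StKLPries duality.

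Your main route instead restricts Pultr--Sichler--Wigner duality directly. This is legitimate because \KRFrm and \KRLPries are defined with all frame homomorphisms and all \L-morphisms, respectively. The only object-level input is then \cref{thm:compactness} together with the regularity transfer. So you bypass the nontrivial fact that \L-morphisms between compact regular \L-spaces are automatically \L-proper.

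What the paper's route buys in exchange is that \KRLPries sits fully inside \StKLPries. The equivalence with \KHaus is then a restriction of the equivalence between \StKLPries and \SKSp, consistent with \cref{fig:Priestley-dualities}. Your composite with Isbell duality gives an equivalence that agrees with it up to the identification $\pt(\clopup(X)) = \loc X$.
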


We next define the \L-spaces corresponding to algebraic, arithmetic, and coherent frames. 
\begin{definition}
\leavevmode
\begin{enumerate}
    \item An \L-space $X$ is \emph{\L-algebraic} or an \emph{algebraic \L-space} if the \emph{algebraic kernel}  
    \[
        \alg U := \bigcup \{V \in \clopsup(X) \mid V \subseteq U\}
    \]
    is dense in $U$ for each $U \in\clopup(X)$.
    \item An algebraic \L-space $X$ is \emph{\L-arithmetic} or an \emph{arithmetic \L-space} if 
    \[
        \alg(U \cap V) = \alg U \cap \alg V
    \]
    for all $U,V \in \clopup(X)$.
    \item An arithmetic \L-space $X$ is \emph{\L-coherent} or a \emph{coherent \L-space} if it is \L-compact.
    \item An \L-morphism $f: X \to Y$ is \emph{coherent} if
    \[
        f^{-1}(\alg U) = \alg f^{-1}(U)
    \]
    for each $U \in \opup(Y)$.
\end{enumerate}
\end{definition}

Let \AlgLPries be the category of algebraic \L-spaces and coherent \L-mor\-phisms. Let also \AriLPries and \CohLPries be the full subcategories of \AlgLPries consisting of arithmetic \L-spaces and coherent \L-spaces, respectively. 
Then \cref{thm:ConLP-dualities}
restricts to yield  
the following theorem (see \cite{BM25},  Theorems~4.11 and 5.5 and Corollaries~4.14 and 5.9).

\begin{theorem}\label{thm:alglp-dualities}
\leavevmode
    \begin{enumerate}
        \item \AlgLPries is equivalent to \KBSob and dually equivalent to \AlgFrm.
        \item \AriLPries is equivalent to \SKBSp and dually equivalent to \AriFrm.
        \item \CohLPries is equivalent to \Spec and dually equivalent to \CohFrm.
    \end{enumerate}
\end{theorem}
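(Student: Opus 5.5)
The plan is to obtain everything by restricting \cref{thm:ConLP-dualities}, in the same way that \cref{thm:Alg-dualities} is obtained from \cref{thm:confrm-duality}. Throughout, we identify a frame $L$ with $\clopup(X)$, where $X$ is its \L-space. Recall that joins in $\clopup(X)$ are computed as $\bigvee_i U_i=\cl\bigcup_i U_i$, which is a clopen upset because $X$ is an \L-space.

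The first step is to identify the compact elements with the clopen Scott upsets. By the relativized form of \cref{thm:compactness}, $U\in\clopup(X)$ is compact iff $\min U\subseteq\loc X$. This says exactly that $U\in\clopsup(X)$.

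The key step is then the identity
\[
\con U=\alg U
\]
for every $U\in\opup(X)$, whenever $L$ (equivalently $X$) is algebraic. Here $\con U$ is defined by the same formula for open upsets as for clopen ones.

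\begin{itemize}
\item The inclusion $\alg U\subseteq\con U$ is immediate. A clopen Scott upset $V\subseteq U$ satisfies $V\ll V$ by \cref{Scott-upsets-property}, and hence $V\ll U$.
\item For the reverse inclusion, take $V\ll U$. Algebraicity gives $U=\cl(\alg U)$, where $\alg U$ is a union of clopen Scott upsets contained in $U$. The definition of $V\ll U$ in terms of open upsets then yields $V\subseteq\alg U$. Since $V$ is closed, hence compact, $V$ is covered by finitely many clopen Scott upsets $W_1,\dots,W_n\subseteq U$.
\item A finite union of Scott upsets is again a Scott upset, because $\min(W_1\cup\dots\cup W_n)\subseteq\bigcup_i\min W_i\subseteq\loc X$. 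Hence $V\subseteq\alg U$.
\end{itemize}

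This is the \L-space analogue of \cref{lem:ll-and-alg}. I expect it to be the main technical point, because one must check carefully that the $\ll$-condition on open upsets behaves well with respect to these finite covers.

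With $\con=\alg$ in hand, the remaining steps are bookkeeping.

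\begin{itemize}
\item \emph{Algebraic frames.} Since $a=\bigvee\{k\in\K(L)\mid k\le a\}$ translates to $U=\cl(\alg U)$, the frame $L$ is algebraic iff $X$ is \L-algebraic. Every algebraic \L-space is therefore \L-continuous.
\item \emph{Arithmetic frames.} The condition $\alg(U\cap V)=\alg U\cap\alg V$ coincides with $\con(U\cap V)=\con U\cap\con V$. So arithmetic \L-spaces are exactly the algebraic stably continuous \L-spaces, matching the fact that arithmetic frames are the algebraic stably continuous frames.
\item \emph{Coherent frames.} \L-compactness is unchanged, so coherent \L-spaces are exactly the algebraic stably compact \L-spaces.
\item \emph{Morphisms.} For an \L-morphism $f$ between algebraic \L-spaces, the coherence condition $f^{-1}(\alg U)=\alg f^{-1}(U)$ is literally \L-properness. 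Hence \AlgLPries, \AriLPries, and \CohLPries are full subcategories of \ConLPries, \StConLPries, and \StKLPries, respectively.
\end{itemize}

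Under the dual equivalences of \cref{thm:ConLP-dualities}, these subcategories correspond precisely to \AlgFrm, \AriFrm, and \CohFrm, which are full subcategories of the corresponding categories of continuous frames. This gives the dual equivalences.

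Finally, compose with the dualities of \cref{thm:Alg-dualities}. Equivalently, apply the functor $\loc$, noting that $\clopsup(X)$ corresponds to the compact opens of $\loc X$. This yields the equivalences with \KBSob, \SKBSp, and \Spec.
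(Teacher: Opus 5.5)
Your proposal is correct and matches the paper's route: the paper obtains this result by restricting \cref{thm:ConLP-dualities} (deferring details to \cite{BM25}) and relies on the same identity $\con U=\alg U$ on algebraic \L-spaces, which it quotes from \cite{Mel25} for clopen $U$. Your extension of that identity to open upsets is exactly what makes coherent \L-morphisms coincide with \L-proper ones. The only slip is that for a non-closed $U\in\opup(X)$ you have merely $U\subseteq\cl(\alg U)$ (because open upsets of a Priestley space are unions of clopen upsets), which is all your argument actually uses; and once $V\subseteq\alg U$ is established, the finite-cover step is superfluous.
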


These equivalences further restrict to the setting of locally Stone and Stone frames.

\begin{definition}
\leavevmode
\begin{enumerate}
    \item An \L-space $X$ is \emph{\L-zero-dimensional} or a \emph{zero-dimensional \L-space} provided for each $U \in \clopup(X)$, the \emph{zero-dimensional kernel} 
    \[
        \cen U := \bigcup \{V \in \clopup(X) \mid V = \downset V \subseteq U\}
    \]
    is dense in $U$. 
    \item A zero-dimensional \L-space $X$ is \emph{\L-locally Stone} or a \emph{locally Stone \L-space} provided it is \L-continuous.
    \item A zero-dimensional \L-space $X$ is \emph{\L-Stone} or a \emph{Stone \L-space} provided it is \L-compact.
\end{enumerate}
\end{definition}

Every Stone \L-space is \L-coherent and every \L-morphism between Stone \L-spaces is a coherent \L-morphism (see \cite[Thm.~5.14]{BM25}). 
Let \LStoneLPries be the full subcategory of \AriLPries consisting of locally Stone \L-spaces, and let \StoneLPries be the full subcategory of \CohLPries consisting of Stone \L-spaces. 
Since a frame is zero-dimensional iff its \L-space is \L-zero-dimensional (see \cite[Thm.~6.3(1)]{BGJ16}), 
\cref{thm:alglp-dualities} further restricts to yield:

\begin{theorem}[{\cite[Cor.~5.19]{BM25}}]
    \leavevmode
    \begin{enumerate}
        \item \LStoneLPries is equivalent to \LStone and dually equivalent to \LStoneFrm. \label{thm:LStoneLPries} 
        \item \StoneLPries is equivalent to \Stone and dually equivalent to \StoneFrm. \label{thm:StoneLPries}
    \end{enumerate}
\end{theorem}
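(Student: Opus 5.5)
The plan is to obtain both parts by restricting the dualities of \cref{thm:alglp-dualities} and \cref{thm:Alg-dualities}, rather than building anything new. Since \LStoneFrm and \StoneFrm are full subcategories of \AriFrm and \CohFrm, and \LStoneLPries and \StoneLPries are defined as full subcategories of \AriLPries and \CohLPries, it is enough to check two things. First, the dual equivalence $L \mapsto X_L$ (the \L-space of $L$), with inverse $X \mapsto \clopup(X)$, must send the objects of one subcategory exactly onto the objects of the other, up to isomorphism. Second, the equivalence $X \mapsto \loc X$ must do the same for the space-side subcategories. Fullness then makes morphisms restrict automatically.

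For the frame side of (1), let $L$ be a locally Stone frame. Then $L$ is algebraic and regular, hence stably continuous by \cref{conreg-is-stably}, hence arithmetic, so $X_L \in \AriLPries$. Two further facts place $X_L$ in \LStoneLPries. Zero-dimensionality of $L$ gives that $X_L$ is \L-zero-dimensional, by the cited fact from \cite{BGJ16}. Continuity of $L$ gives that $X_L$ is \L-continuous, by \cref{thm:ConLP-dualities}.

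Conversely, let $X$ be a locally Stone \L-space. Being \L-continuous, $X$ has a continuous frame $\clopup(X)$ by \cref{thm:ConLP-dualities}. Being \L-zero-dimensional, $\clopup(X)$ is zero-dimensional, again by \cite{BGJ16}. Hence $\clopup(X)$ is a locally Stone frame, so it is arithmetic, and therefore $X \cong X_{\clopup(X)}$ is an arithmetic \L-space. This step is where I expect the only real care to be needed. The definition of \LStoneLPries sits inside \AriLPries, yet \L-continuity together with \L-zero-dimensionality does not obviously imply \L-arithmeticity directly on the Priestley side. My plan is therefore to route this implication through the frame side as above, using that locally Stone frames are algebraic \cite[Rem.~3.12]{BK25}, rather than attempting a direct Priestley-space argument.

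For the space side of (1), compose with the equivalence $\AriLPries \simeq \SKBSp$, which is $X \mapsto \loc X \cong \pt(\clopup(X))$. Under Hofmann--Lawson duality, $\pt$ sends locally Stone frames exactly to locally Stone spaces, by \cref{thm:Stone}. The composite therefore identifies \LStoneLPries with \LStone, and since all the subcategories involved are full, this is an equivalence of categories.

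Part (2) follows from (1) by adding compactness on all three sides. A frame $L$ is compact iff $\min X_L \subseteq \loc X_L$ by \cref{thm:compactness}, that is, iff $X_L$ is \L-compact. Likewise, $\pt L$ is compact iff $L$ is. For morphisms, every frame homomorphism between Stone frames is coherent, and every \L-morphism between Stone \L-spaces is coherent \cite[Thm.~5.14]{BM25}. Hence \StoneFrm and \StoneLPries are full in the categories with all frame homomorphisms, respectively all \L-morphisms, and the restriction matches \Stone with continuous maps.
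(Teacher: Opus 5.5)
Your proposal is correct and follows essentially the paper's route. The paper obtains the theorem by restricting the dualities of \cref{thm:alglp-dualities}, using the correspondence from \cite{BGJ16} between zero-dimensional frames and \L-zero-dimensional \L-spaces, with \L-compactness and \cite[Thm.~5.14]{BM25} handling the Stone case. Your detour through the frame side to show that locally Stone \L-spaces are arithmetic is valid but not strictly needed, since \LStoneLPries is defined as a full subcategory of \AriLPries.
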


These correspondences are summarized 
in \cref{fig:Priestley-dualities}, where we use the following conventions:
\begin{itemize}
    \item \begin{tikzcd}\cat{A} \ar[r, eqv] & \cat B\end{tikzcd} stands for ``$\cat A$ is equivalent to $\cat B$;''
    \item \begin{tikzcd}\cat{A} \ar[r, dual] & \cat B\end{tikzcd} stands for ``$\cat A$ is dually equivalent to $\cat B$;''
    \item \begin{tikzcd}\cat{A} \ar[r, hook] & \cat B\end{tikzcd} stands for ``$\cat A$ is a full subcategory of $\cat B$.''
\end{itemize}

\begin{figure}[H]
    \centering
    \tiny
    \begin{tikzcd}[column sep={1cm, between origins}, row sep={.85cm, between origins}]
        \ConFrm  &&&&& \ConLPries &&&&& \LCSob \\
        && \AlgFrm &&&&& \AlgLPries &&&&& \KBSob\\
        \StConFrm &&&&& \StConLPries &&&&& \SLCSp \\
        && \AriFrm &&&&& \AriLPries &&&&& \SKBSp\\
        & \ConRFrm &&&&& \ConRLPries &&&&& \LCHaus \\
        &&& \LStoneFrm &&&&& \LStoneLPries &&&&& \LStone\\
        \StKFrm &&&&& \StKLPries &&&&& \SKSp\\
        && \CohFrm &&&&& \CohLPries &&&&& \Spec \\
        & \KRFrm &&&&& \KRLPries &&&&& \KHaus\\
        &&& \StoneFrm &&&&& \StoneLPries &&&&& \Stone
        \ar[from=3-1, to=1-1, hook']
        \ar[from=3-6, to=1-6, hook']
        \ar[from=3-11, to=1-11, hook']
        \ar[from=7-1, to=3-1, hook']
        \ar[from=7-6, to=3-6, hook']
        \ar[from=7-11, to=3-11, hook']
        \ar[from=1-1, to=1-6, dual]
        \ar[from=3-1, to=3-6, dual]
        \ar[from=7-1, to=7-6, dual]
        \ar[from=1-6, to=1-11, eqv]
        \ar[from=3-6, to=3-11, eqv]
        \ar[from=7-6, to=7-11, eqv]
        \ar[from=2-3, to=1-1, hook']
        \ar[from=2-8, to=1-6, hook']
        \ar[from=2-13, to=1-11, hook']
        \ar[from=4-3, to=3-1, hook']
        \ar[from=4-8, to=3-6, hook']
        \ar[from=4-13, to=3-11, hook']
        \ar[from=4-3, to=2-3, hook', crossing over]
        \ar[from=4-8, to=2-8, hook', crossing over]
        \ar[from=4-13, to=2-13, hook']
        \ar[from=8-3, to=7-1, hook']
        \ar[from=8-8, to=7-6, hook']
        \ar[from=8-13, to=7-11, hook']
        \ar[from=8-3, to=4-3, hook', crossing over]
        \ar[from=8-8, to=4-8, hook', crossing over]
        \ar[from=8-13, to=4-13, hook']
        \ar[from=2-3, to=2-8, dual, crossing over]
        \ar[from=4-3, to=4-8, dual, crossing over]
        \ar[from=2-8, to=2-13, eqv, crossing over]
        \ar[from=4-8, to=4-13, eqv, crossing over]
        \ar[from=8-3, to=8-8, dual]
        \ar[from=8-8, to=8-13, eqv]
        \ar[from=5-2, to=3-1, hook']
        \ar[from=5-7, to=3-6, hook', crossing over]
        \ar[from=5-12, to=3-11, hook', crossing over]
        \ar[from=9-2, to=7-1, hook']
        \ar[from=9-7, to=7-6, hook', crossing over]
        \ar[from=9-12, to=7-11, hook', crossing over]
        \ar[from=9-2, to=5-2, hook', crossing over]
        \ar[from=9-7, to=5-7, hook', crossing over]
        \ar[from=9-12, to=5-12, hook', crossing over]
        \ar[from=5-2, to=5-7, dual, crossing over]
        \ar[from=5-7, to=5-12, eqv, crossing over]
        \ar[from=9-2, to=9-7, dual]
        \ar[from=9-7, to=9-12, eqv]
        \ar[from=6-4, to=4-3, hook', crossing over]
        \ar[from=6-9, to=4-8, hook', crossing over]
        \ar[from=6-14, to=4-13, hook', crossing over]
        \ar[from=6-4, to=5-2, hook', crossing over]
        \ar[from=6-9, to=5-7, hook', crossing over]
        \ar[from=6-14, to=5-12, hook', crossing over]
        \ar[from=10-4, to=9-2, hook']
        \ar[from=10-9, to=9-7, hook']
        \ar[from=10-14, to=9-12, hook']
        \ar[from=10-4, to=8-3, hook', crossing over]
        \ar[from=10-9, to=8-8, hook', crossing over]
        \ar[from=10-14, to=8-13, hook']
        \ar[from=10-4, to=6-4, hook', crossing over]
        \ar[from=10-9, to=6-9, hook', crossing over]
        \ar[from=10-14, to=6-14, hook']
        \ar[from=6-4, to=6-9, dual, crossing over]
        \ar[from=10-4, to=10-9, dual]
        \ar[from=6-9, to=6-14, eqv, crossing over]
        \ar[from=10-9, to=10-14, eqv]
    \end{tikzcd}
    \caption{Equivalences and dual equivalences described in \cref{sec:Priestley-duality}}\label{fig:Priestley-dualities}
\end{figure}

\section{Scott-continuous nuclei} \label{sec:Scott-cts-nuclei}

In this section, 
we show that Priestley duality can be used as a convenient tool to study Scott-continuous nuclei. 
In particular, we extend the characterization of inductive nuclei from \cite{BBM25} to Scott-continuous nuclei. We start by recalling that a \emph{nucleus} on a frame $L$ is a map $j : L \to L$ satisfying
\[
    j(a \wedge b) = ja \wedge jb, \quad a \leq ja, \text{ and} \quad jja \leq ja.
\]
It is well known that nuclei on $L$ are in one-to-one correspondence with sublocales of $L$ (see, e.g., \cite[p.~31]{PP12}). To recall the latter, for $a,b \in L$, let $$a \to b = \bigvee\{c \in L \mid a \wedge c \leq b\}$$ be the {\em pseudocomplement of $a$ relative to $b$}. A subset $S$ of $L$ is a \emph{sublocale} provided $S$ is closed under arbitrary meets and $a \to s \in S$ for each $a \in L$ and $s \in S$. 
For each nucleus $j : L \to L$, its image $jL = \{ja \mid a \in L\}$ is a sublocale of $L$, and conversely, each sublocale $S$ is the image of the nucleus given by $j_S a = \bigwedge (S \cap \upset a)$ for each $a \in L$. 

Let $\N(L)$ be the poset of nuclei on $L$ ordered pointwise, and let
$\S(L)$ be the poset of sublocales of $L$ ordered by inclusion.
It is well known (see, e.g., \cite[p.~51]{Joh82}) that $\N(L)$ is a frame, and that $\S(L)$ is a coframe (that is, the order dual of a frame; see, e.g., \cite[p.~28]{PP12}). 

\begin{theorem}
    For a frame $L$, the assignment $j \mapsto jL$ is a dual isomorphism between $\N(L)$ and $\S(L)$ whose inverse is given by 
    $S \mapsto j_S$.
\end{theorem}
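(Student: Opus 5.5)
We need to show that $j \mapsto jL$ and $S \mapsto j_S$ are mutually inverse maps between $\N(L)$ and $\S(L)$, and that they reverse order. The plan is to check, in order, well-definedness of each map, the two round-trip identities, and antitonicity in both directions.

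\textbf{Well-definedness of $j \mapsto jL$.} Let $j$ be a nucleus.
\begin{itemize}
\item For closure under meets, take $S \subseteq jL$ and put $m = \bigwedge S$. Monotonicity of $j$, which follows from preservation of binary meets, gives $jm \le js = s$ for each $s \in S$. Hence $jm \le m$, and inflationarity gives $m \le jm$, so $m = jm \in jL$.
\item For closure under implication, take $s = js$ and $a \in L$. Then $a \wedge j(a \to s) \le ja \wedge j(a \to s) = j(a \wedge (a\to s)) \le js = s$. Therefore $j(a\to s) \le a \to s$, and so $a \to s \in jL$.
\end{itemize}

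\textbf{Well-definedness of $S \mapsto j_S$.} Let $S$ be a sublocale.
\begin{itemize}
\item Since $S$ is closed under meets, $j_S a \in S$.
\item Clearly $a \le j_S a$.
\item Since $j_S$ fixes elements of $S$, we get $j_S j_S a = j_S a$.
\end{itemize}
Preservation of binary meets is the one step needing an argument. Monotonicity gives $j_S(a\wedge b) \le j_S a \wedge j_S b$, so we only need the reverse inequality. Put $s = j_S(a\wedge b) \in S$. Then $a \le b \to s$, and $b \to s \in S$ by the sublocale axiom. It follows that $j_S a \le b \to s$, so $b \wedge j_S a \le s$. This says $b \le j_S a \to s$, and $j_S a \to s$ lies in $S$. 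Hence $j_S b \le j_S a \to s$, which gives $j_S a \wedge j_S b \le s$, as required.

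\textbf{The round trips.}
\begin{itemize}
\item To see $j_S L = S$: every $j_S a$ lies in $S$, and every $s \in S$ equals $j_S s$.
\item To see $j_{jL} = j$: we have $ja \in jL \cap \upset a$, so $j_{jL} a \le ja$. Conversely, any $s \in jL$ with $s \ge a$ satisfies $s = js \ge ja$, so $j_{jL} a \ge ja$.
\end{itemize}

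\textbf{Order reversal.} If $j \le k$ pointwise and $b = kb$, then $b \le jb \le kb = b$, so $b = jb$. Thus $kL \subseteq jL$. Conversely, if $S \subseteq T$, then $T \cap \upset a \supseteq S \cap \upset a$, so $j_T \le j_S$.

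The main obstacle is the preservation of binary meets by $j_S$, which is the only place the implication-closure axiom is used in a nontrivial way. The rest is routine.
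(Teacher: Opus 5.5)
Your proof is correct and complete, and it is essentially the standard direct verification: the paper gives no proof of its own and defers to the literature (Picado--Pultr, p.~31), where the same argument appears. In particular, meet-preservation of $j_S$ is obtained there exactly as you do, by passing through $b \to s \in S$ and $j_S a \to s \in S$. The only thing you leave implicit is that $jL$ is exactly the set of fixpoints of $j$, which follows at once from $jja \le ja \le j(ja)$.
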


A well-known example of a nucleus is the 
\emph{double-negation nucleus} $a \mapsto a^{**}$.
Its corresponding sublocale $\B(L)$ is known as the \emph{booleanization of $L$} (see, e.g., \cite{BP96}). 
A nucleus $j \in \N(L)$ is \emph{dense} if $j0 = 0$, and a sublocale $S \in \S(L)$ is \emph{dense} if $0 \in S$. The following is well known (see, e.g., \cite[p.~246]{PP21}): 
\begin{theorem}[Isbell's Density Theorem]  \label{thm:Isbell-density}
    For each frame $L$, the double-negation nucleus is the largest dense nucleus, and hence the booleanization $\B(L)$ is the smallest dense sublocale of~$L$.
\end{theorem}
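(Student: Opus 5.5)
We must prove Isbell's Density Theorem: the double-negation nucleus $\neg\neg : a \mapsto a^{**}$ is the largest dense nucleus on $L$, and consequently $\B(L)$ is the smallest dense sublocale.

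The plan has three steps.

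\textbf{Step 1: $\neg\neg$ is a dense nucleus.} We verify the nucleus axioms.
\begin{itemize}
\item $a \le a^{**}$ holds because $a \wedge a^* = 0$.
\item Idempotence $a^{***} = a^*$ follows from the Galois connection $a \le b^* \iff a \wedge b = 0$.
\item For $(a\wedge b)^{**} = a^{**}\wedge b^{**}$, the inequality $\le$ comes from antitonicity of $(-)^*$ applied twice. For $\ge$, we must show $a^{**} \wedge b^{**} \wedge (a\wedge b)^* = 0$. From $a \wedge b \wedge (a\wedge b)^* = 0$ we get $a \wedge (a\wedge b)^* \le b^*$. Meeting with $b^{**}$ gives $a \wedge b^{**}\wedge (a\wedge b)^* = 0$, hence $b^{**}\wedge(a\wedge b)^* \le a^*$. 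Meeting with $a^{**}$ gives zero.
\item Density: $0^{**} = 1^* = 0$.
\end{itemize}

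\textbf{Step 2: every dense nucleus lies below $\neg\neg$.} Let $j$ be a nucleus with $j0 = 0$, and fix $a \in L$. Since nuclei preserve binary meets,
\[
ja \wedge a^* \le ja \wedge j(a^*) = j(a \wedge a^*) = j0 = 0.
\]
Hence $ja \le a^{**}$, so $j \le \neg\neg$ pointwise. Together with Step 1, this shows $\neg\neg$ is the largest dense nucleus.

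\textbf{Step 3: transfer to sublocales.} Apply the dual isomorphism $j \mapsto jL$ between $\N(L)$ and $\S(L)$ from the preceding theorem.
\begin{itemize}
\item A nucleus $j$ is dense iff $j0 = 0$. Since $jL$ is the set of fixpoints of $j$ and $j0$ is the least element of $jL$, this holds iff $0 \in jL$, i.e. iff $jL$ is a dense sublocale. So the isomorphism restricts to a bijection between dense nuclei and dense sublocales.
\item The isomorphism reverses order, so the largest dense nucleus $\neg\neg$ corresponds to the smallest dense sublocale, namely $\B(L)$.
\end{itemize}

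The only non-routine step is the meet-preservation of $(-)^{**}$ in Step 1, which is handled by the absorption argument above.
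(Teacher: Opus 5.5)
Your proof is correct. The verification that $(-)^{**}$ is a dense nucleus, the key inequality $ja \wedge a^* \le j(a \wedge a^*) = j0 = 0$, and the transfer through the order-reversing bijection $j \mapsto jL$ (using $j0 = 0$ iff $0 \in jL$) are all sound.

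The paper gives no proof of this statement. It quotes it as well known from Picado--Pultr, where the standard argument is essentially yours. The route the paper itself favours appears later, in \cref{rem:Isbell-density-dually}, and is Priestley-theoretic. By \cref{thm:dense}, $j$ is dense iff $X_j$ is cofinal, and in an \L-space this means $\max X \subseteq X_j$. Since $\max X$ is the nuclear subset dual to the double-negation nucleus and $\alpha$ is a dual isomorphism, this says exactly that $j$ is dense iff $j \le (-)^{**}$. That yields both halves of the theorem at once and makes the minimality of $\B(L)$ geometrically visible: $\max X$ is contained in every cofinal nuclear subset.

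The dual argument is a one-liner only because it imports nontrivial results: the characterization of dense nuclei by cofinality, and the fact that $\max X$ is nuclear and corresponds to $(-)^{**}$. Your argument is elementary and self-contained. It needs only the Galois connection $c \le b^* \iff c \wedge b = 0$, the nucleus axioms, and the nucleus/sublocale correspondence the paper already states. It also proves directly that $(-)^{**}$ is a nucleus, which the paper takes for granted.
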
  

We are particularly interested in nuclei that behave well with respect to the way-below relation:

\begin{definition}
    A nucleus $j$ on a frame $L$ is \emph{Scott-continuous} provided $j(\bigvee S) = \bigvee j[S]$ for each directed $S \subseteq L$.
\end{definition}

\begin{remark}
    These nuclei were studied in \cite{Ban88,Sun88} under the name of \emph{finitary} nuclei.
    The term \emph{Scott-continuous nucleus} is due to Escard\'o \cite{Esc99}, who also referred to them as \emph{perfect} nuclei. 
    The Scott-continuous nuclei form a subframe of $\N(L)$,
    called the \emph{patch frame} of $L$, which plays the role of the patch topology
    in the pointfree setting (see, e.g., \cite{Esc01}).
\end{remark}

It is easy to see that a nucleus is Scott continuous iff its corresponding sublocale is closed under directed joins (see, e.g., \cite[p.~81]{GH+03}). In the setting of continuous frames, Scott-continuous nuclei are precisely those determined by the way-below relation. 

\begin{lemma} 
\label{lem:finitary-iff-inductive}
    Let $L$ be a continuous frame. For $j \in \N(L)$, the following are equivalent.
    \begin{enumerate}[cref=lemma]
        \item\label{lem:finitary}
        $j$ is Scott continuous;
        \item \label{lem:preinductive}
        $j(a) = \bigvee\{jb \mid b \ll a\}$ for each $a \in L$;
        \item \label{lem:finitary-and-filters} If $\filter{F} \in \SFilt(L)$, then $j^{-1}[\filter{F}] \in \SFilt(L)$.
    \end{enumerate}
    The following condition implies the above conditions:
    \begin{enumerate}[cref=lemma, resume]
        \item $j(a) = \bigvee\{jb \mid b \in \K(L),\ b \leq a\}$ for each $a \in L$. \label{lem:cts-inductive}
    \end{enumerate}
    If $L$ is algebraic, then all the above conditions are equivalent.
\end{lemma}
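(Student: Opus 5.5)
I will prove the cycle (1)$\Rightarrow$(2)$\Rightarrow$(3)$\Rightarrow$(1), then show that (4)$\Rightarrow$(2), and finally that (2)$\Rightarrow$(4) when $L$ is algebraic.

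\emph{(1)$\Rightarrow$(2).} Since $L$ is continuous, the set $\{b \mid b \ll a\}$ is directed and its join is $a$. Directedness is standard: if $b_1,b_2 \ll a$, then $b_1 \vee b_2 \ll a$, because a finite union of finite subsets is finite. Applying Scott continuity gives $ja = \bigvee\{jb \mid b \ll a\}$.

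\emph{(2)$\Rightarrow$(3).} Let $\filter{F}$ be a Scott-open filter. Then $j^{-1}[\filter{F}]$ is a filter, since $j$ is monotone and preserves binary meets. Suppose $S$ is directed and $j(\bigvee S) \in \filter{F}$. By (2), $j(\bigvee S) = \bigvee\{jb \mid b \ll \bigvee S\}$, and this family is directed. So Scott-openness of $\filter{F}$ yields some $b \ll \bigvee S$ with $jb \in \filter{F}$. Because $S$ is directed, $b \le s$ for some $s \in S$, hence $js \in \filter{F}$ and $s \in j^{-1}[\filter{F}]$.

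\emph{(3)$\Rightarrow$(1).} Let $S$ be directed. Clearly $\bigvee j[S] \le j(\bigvee S)$. Suppose the reverse inequality fails. By \cref{lem:sep-by-SFilt} there is $\filter{F} \in \SFilt(L)$ with $j(\bigvee S) \in \filter{F}$ and $\bigvee j[S] \notin \filter{F}$. Then $\bigvee S \in j^{-1}[\filter{F}]$, which is Scott-open by (3). So some $s \in S$ satisfies $js \in \filter{F}$. Since $js \le \bigvee j[S]$ and $\filter{F}$ is an upset, $\bigvee j[S] \in \filter{F}$, a contradiction.

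\emph{(4)$\Rightarrow$(2).} Write $ja = \bigvee\{jk \mid k \in \K(L),\ k \le a\}$. Each compact $k \le a$ satisfies $k \ll a$ by \cref{lem:weakening-2}, so
\[
ja \le \bigvee\{jb \mid b \ll a\} \le ja.
\]

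\emph{Algebraic case, (2)$\Rightarrow$(4).} If $b \ll a$, then \cref{lem:ll-and-alg} gives a compact $k$ with $b \le k \le a$, so $jb \le jk$. Therefore
\[
\bigvee\{jb \mid b \ll a\} \le \bigvee\{jk \mid k \in \K(L),\ k \le a\} \le ja,
\]
and (2) turns this into equality.

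The one delicate step is (2)$\Rightarrow$(3). There one must note that $\{jb \mid b \ll \bigvee S\}$ is directed before invoking Scott-openness of $\filter{F}$, and then pass from $b \ll \bigvee S$ to a single element of $S$ using directedness of $S$ together with the definition of $\ll$.
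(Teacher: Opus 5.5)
Your proof is correct. The paper cites Gierz et al.\ for (1)$\Leftrightarrow$(2) and for the equivalence with (4) in the algebraic case, remarks that (4)$\Rightarrow$(2) is clear, and proves only (1)$\Leftrightarrow$(3). Its (3)$\Rightarrow$(1) argument is the same as yours.

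The difference is in how you reach (3). The paper proves (1)$\Rightarrow$(3) directly from Scott continuity, using only that $j[S]$ is directed. You prove (2)$\Rightarrow$(3) instead. That lets you close the cycle (1)$\Rightarrow$(2)$\Rightarrow$(3)$\Rightarrow$(1) and avoid a separate proof of (2)$\Rightarrow$(1). The cost is the extra step of going from $b \ll \bigvee S$ to a single $s \in S$.

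Your route makes the lemma self-contained, and your direct arguments for (4)$\Rightarrow$(2) and, when $L$ is algebraic, (2)$\Rightarrow$(4) are fine. Note that the key computation in your (2)$\Rightarrow$(3) step already gives a one-line proof of (2)$\Rightarrow$(1): each $b \ll \bigvee S$ lies below some $s \in S$, so $j(\bigvee S)=\bigvee\{jb \mid b \ll \bigvee S\} \le \bigvee j[S]$. Also, your (2)$\Rightarrow$(3) step does not use continuity of $L$. Continuity enters only in (1)$\Rightarrow$(2), to get $a=\bigvee\{b \mid b \ll a\}$, and in (3)$\Rightarrow$(1), through the separation lemma for Scott-open filters.
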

\begin{proof}
    The equivalence of \tref{lem:finitary} and \tref{lem:preinductive} (and that they are equivalent to \tref{lem:cts-inductive} when $L$ is algebraic) is shown in \cite[Props.~II-2.1]{GH+03}. It is also clear that \tref{lem:cts-inductive} implies \tref{lem:preinductive}. Thus, it suffices to show that \tref{lem:finitary} and \tref{lem:finitary-and-filters} are equivalent. 
 
    \tref{lem:finitary}$\Rightarrow$\tref{lem:finitary-and-filters} Suppose $\filter{F} \in \SFilt(L)$. Since $j$ is a nucleus, $j^{-1}[\filter{F}]$ is a filter. To see that it is Scott open, let $S \subseteq L$ be directed and $\bigvee S \in j^{-1}[\filter{F}]$. By \tref{lem:finitary}, $\bigvee j[S] = j(\bigvee S) \in \filter{F}$. Since $\filter{F} \in \SFilt(L)$ and $j[S]$ is directed, there is $s \in S$ with $js \in \filter{F}$. Therefore, $s \in j^{-1}[\filter{F}]$, and so $j^{-1}[\filter{F}] \in \SFilt(L)$.

    \tref{lem:finitary-and-filters}$\Rightarrow$\tref{lem:finitary} Let $S \subseteq L$ be directed. It suffices to show that $j(\bigvee S) \leq \bigvee j[S]$. Suppose not. Since $L$ is continuous, by \cref{lem:sep-by-SFilt} there is $\filter{F} \in \SFilt(L)$ such that $j(\bigvee S) \in \filter{F}$ and $\bigvee j[S] \notin \filter{F}$.
    By \tref{lem:finitary-and-filters}, $j^{-1}[\filter{F}] \in \SFilt(L)$, so since $\bigvee S \in j^{-1}[\filter{F}]$, there is $s \in S$ with $s \in j^{-1}[\filter{F}]$. Therefore, $js \in \filter{F}$, contradicting
    $\bigvee j[S] \notin \filter{F}$.
\end{proof}

\begin{remark}
    \label{rem:finitary-iff-inductive}
    The nuclei satisfying 
    \cref{lem:cts-inductive} 
    are called \emph{inductive} 
    in \cite{MZ03}. 
    Thus, a nucleus on an algebraic frame is Scott continuous iff it is inductive.
\end{remark}

One important feature of Scott-continuous nuclei on a continuous frame is that they preserve the way-below relation:

\begin{lemma}[{see, e.g., \cite[p.~81]{GH+03}}] \label{lem:Sct-cts-preserves-ll}
    Let $L$ be continuous and $j \in \N(L)$ Scott-continuous. Then $a \ll b$ implies $ja \ll jb$. Consequently, $a \in \K(L)$ implies $ja \in \K(jL)$.
\end{lemma}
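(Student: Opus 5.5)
To prove \cref{lem:Sct-cts-preserves-ll}, I would work directly with the definition of $\ll$ in the sublocale $jL$. Joins in $jL$ are computed as $j(\bigvee S)$, while meets agree with those in $L$. So the claim to verify is: if $a \ll b$ in $L$ and $jb \le j(\bigvee S)$ for some $S \subseteq jL$, then $ja \le j(\bigvee T)$ for some finite $T \subseteq S$. The Scott continuity of $j$ is exactly what lets me convert the $jL$-join into a directed join in $L$.

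The first step is to replace $S$ by the directed family $D = \{\bigvee T \mid T \subseteq S \text{ finite}\}$. By Scott continuity,
\[
j\Bigl(\bigvee S\Bigr) = j\Bigl(\bigvee D\Bigr) = \bigvee\{j(\bigvee T) \mid T \subseteq S \text{ finite}\},
\]
and the family on the right is directed in $L$. From $a \ll b$ and $b \le jb \le j(\bigvee S)$, the definition of $\ll$ gives finitely many finite sets $T_1,\dots,T_n \subseteq S$ with $a \le \bigvee_i j(\bigvee T_i)$. Since the family is directed, these are dominated by a single term: put $T = T_1 \cup \dots \cup T_n$, so that $a \le j(\bigvee T)$.

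Next I apply $j$ and use idempotence to get $ja \le jj(\bigvee T) = j(\bigvee T)$, which is the join of $T$ in $jL$. Hence $ja \ll jb$ in $jL$. Note that this direction uses only Scott continuity, not continuity of $L$.

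For the consequence, I take $a \in \K(L)$, so $a \ll a$, and apply the above with $b = a$ to get $ja \ll ja$ in $jL$, i.e.\ $ja \in \K(jL)$. The only delicate point in the whole argument is the first step: $\ll$ must be interpreted in $jL$ with joins $j(\bigvee -)$, and it is Scott continuity that turns such a join into an ordinary directed join in $L$. Everything after that is routine.
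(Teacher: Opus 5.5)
Your proof is correct and is essentially the standard argument behind the paper's citation \cite[p.~81]{GH+03}; the paper gives no proof of its own. Scott continuity turns the $jL$-join $j(\bigvee S)$ into the directed join $\bigvee\{j(\bigvee T) \mid T \subseteq S \text{ finite}\}$ in $L$, so $a \ll b \le jb$ gives $a \le j(\bigvee T)$ for some finite $T \subseteq S$, and idempotence of $j$ then yields $ja \ll jb$ in $jL$. You are also right that continuity of $L$ is not used for this lemma; it is only needed later to conclude that $jL$ is continuous.
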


We say that a category $\cat{A}$ of frames is \emph{closed under Scott-continuous nuclei} provided that $jL \in \cat{A}$ for each $L \in \cat{A}$ and Scott-continuous $j \in \N(L)$. As a consequence of \cref{lem:Sct-cts-preserves-ll}, we obtain:

\begin{theorem}\label{closure-under-Scott-cts}
    The categories in \cref{fig:con-and-alg} are closed under Scott-continuous nuclei.
\end{theorem}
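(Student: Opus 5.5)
We need to show that for each category $\cat{A}$ in \cref{fig:con-and-alg} (\ConFrm, \StConFrm, \StKFrm, \AlgFrm, \AriFrm, \CohFrm, \ConRFrm, \KRFrm, \LStoneFrm, \StoneFrm), if $L \in \cat{A}$ and $j \in \N(L)$ is Scott continuous, then $jL \in \cat{A}$. Everything should follow from the surjection $j : L \to jL$ and \cref{lem:Sct-cts-preserves-ll}. Recall that $j : L \to jL$ is a surjective frame homomorphism. Joins in $jL$ are computed as $j(\bigvee S)$, and finite meets agree with those of $L$. We also use that $jL$ is closed under directed joins of $L$, since $j$ is Scott continuous.

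\textbf{Continuity.} Let $L$ be continuous and $ja \in jL$. Then
\[
ja = j\Bigl(\bigvee\{b \mid b \ll a\}\Bigr) = \bigvee{}^{jL}\{jb \mid b \ll a\},
\]
and $jb \ll ja$ in $jL$ by \cref{lem:Sct-cts-preserves-ll}. So $jL$ is continuous.

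\textbf{Algebraic and compact cases.} If $L$ is algebraic, the same computation with $b \in \K(L)$ writes $ja$ as a join of compact elements of $jL$. If $L$ is compact, then $j1 = 1$ is compact in $jL$, again by \cref{lem:Sct-cts-preserves-ll}.

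\textbf{Stability.} This is the main step. Let $x \ll y$ and $x \ll z$ in $jL$. I will show that $x \ll y \wedge z$ in $jL$.
\begin{enumerate}
\item Since $jL$ is continuous, interpolate $x \ll x' \ll y$ in $jL$ (\cref{lem:interpolating}).
\item Write $y = \bigvee^{jL}\{jb \mid b \ll y \text{ in } L\}$, using that $jy = y$ and the continuity computation above.
\item This family is directed: if $b_1, b_2 \ll y$ then $b_1 \vee b_2 \ll y$. So its join in $jL$ coincides with its join in $L$.
\item Hence $x' \le jb$ for a single $b \ll y$ in $L$. Since $x \le x'$, we get $x \le jb$ with $b \ll y$.
\item Similarly $x \le jc$ with $c \ll z$ in $L$.
\item Stability of $\ll$ in $L$ gives $b \wedge c \ll y \wedge z$. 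Then \cref{lem:Sct-cts-preserves-ll} gives $j(b \wedge c) \ll j(y \wedge z) = y \wedge z$ in $jL$.
\item Since $x \le jb \wedge jc = j(b \wedge c)$, \cref{lem:weakening-2} yields $x \ll y \wedge z$.
\end{enumerate}
For arithmetic frames, the same argument with compact elements (or the proposition identifying arithmetic with algebraic plus stably continuous) gives that $\K(jL)$ is closed under binary meets. Combining with compactness handles \StKFrm and \CohFrm.

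\textbf{Regularity.} This is the cleanest route for the regular cases, handled through sublocales.
\begin{enumerate}
\item $jL$ is a sublocale of $L$, and sublocales of regular frames are regular. Concretely: if $c \prec a$ in $L$, then $jc \prec ja$ in $jL$. Indeed, $j$ is a frame homomorphism onto $jL$, so $j(c^*) \wedge jc = j0$, which is the bottom of $jL$, and $j(c^*) \vee^{jL} ja = j(c^* \vee a) = 1$.
\item Hence $ja = \bigvee^{jL}\{jc \mid c \prec a\}$ is a join of elements well inside it.
\item Combined with continuity, this gives \ConRFrm. Adding compactness gives \KRFrm.
\item The same argument with $c \prec c$ shows that complemented elements map to complemented elements, which handles \LStoneFrm and \StoneFrm. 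Alternatively, these follow from the algebraic regular case and the cited characterization from \cite{MZ03}.
\end{enumerate}

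The only genuinely delicate step is stability, specifically the passage from a join in $jL$ to a directed join in $L$. I will handle it as above, using directedness of $\{b \mid b \ll y\}$ together with the fact that directed joins in $jL$ coincide with those in $L$ because $j$ is Scott continuous.
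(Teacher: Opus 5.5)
Your proposal is correct and follows essentially the same route as the paper. The paper also derives everything from \cref{lem:Sct-cts-preserves-ll}, using that $j$ preserves finite meets (for stability), that it preserves compactness of $1$ and compact elements (for the compact and algebraic cases), and that it preserves regularity (via sublocales), but there these steps are only cited to the literature, whereas you supply the details; the interpolation $x \ll x' \ll y$ in your stability argument is superfluous, since $x \ll y$ in $jL$ applied directly to the directed family $\{jb \mid b \ll y\}$ already yields $x \le jb$.
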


\begin{remark}
    That \ConFrm is closed under Scott-continuous nuclei goes back to \cite[Lem.~VII-2.3]{Joh82}. 
    Since nuclei preserve finite meets, it follows that \StConFrm is closed under Scott-continuous nuclei (see \cite[Thm.~3.1.7]{Esc98}). 
    It was observed in \cite[Lem.~2]{Ban88} and \cite[Lem.~2.1]{Sun88} that Scott-continuous nuclei preserve compactness, and hence \StKFrm is closed under Scott-continuous nuclei. 
    Since nuclei also preserve regularity (see, e.g., \cite[Cor.~V-5.8]{PP12}), the same holds for \ConRFrm and \KRFrm.
    Finally, 
    since Scott-continuous nuclei preserve 
    compact elements, 
    the algebraic categories appearing in \cref{fig:con-and-alg} are  closed under Scott-continuous nuclei as well (that \AriFrm is closed under Scot-continuous nuclei was first observed in \cite[Thm.~3.1.7]{Esc98}).
\end{remark}

A Priestley characterization of inductive nuclei was given in \cite{BBM25}. The remainder of this section is devoted to generalizing this result to Scott-continuous nuclei on continuous frames. We recall that nuclei on a frame correspond to certain closed subsets of the Priestley dual.

\begin{definition}
    Let $X$ be an \L-space. A closed set $N \subseteq X$ is \emph{nuclear} provided $\downset (U \cap N)$ is clopen for each clopen $U \subseteq X$. 
\end{definition}

Let $\N(X)$ be the poset of nuclear subsets of $X$ ordered by inclusion. Then $\N(X)$ is a coframe and we have: 

\begin{theorem}[{\cite[Thm.~30]{BG07}; see also \cite[Thm.~4.7 and Rem.~4.8]{BM26}}]\label{thm:nuclei-and-sublocales}
Let $L$ be a frame and $X$ its \L-space. There is a dual isomorphism 
$\alpha : \N(L) \to \N(X)$ defined by
\[
\alpha(j)=\{x\in X \mid j^{-1}[x]\subseteq x\},
\] 
whose inverse is given by 
\[
\alpha^{-1}(N)(a)=\bigvee\{b\in L \mid \sigma(b)\cap N\subseteq \sigma(a)\}.
\]
\end{theorem}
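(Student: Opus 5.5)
The plan is to identify $\alpha(j)$ with the image of the Priestley dual of the quotient map $L \to jL$, and to describe $\alpha^{-1}(N)$ through the explicit formula
\[
\sigma(\alpha^{-1}(N)(a)) = X \setminus \downset(N \setminus \sigma(a)).
\]
We identify $X$ with the prime filters of $L$, and $L$ with $\clopup(X)$ via $\sigma$.

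\emph{Step 1: $\alpha(j)$ is nuclear.} First, $\alpha(j)$ is closed, because
\[
\alpha(j) = \bigcap_{a \in L} \bigl( (X \setminus \sigma(ja)) \cup \sigma(a) \bigr).
\]
For $x \in \alpha(j)$ we have $b \in x$ iff $jb \in x$. Every clopen set is a finite union of sets $\sigma(a) \setminus \sigma(b)$, and $\downset$ commutes with unions, so it suffices to prove
\[
\downset\bigl((\sigma(a) \setminus \sigma(b)) \cap \alpha(j)\bigr) = X \setminus \sigma(a \to jb).
\]
For ``$\subseteq$'': if $x \leq y \in \alpha(j)$ with $a \in y$ and $jb \notin y$, then $a \to jb \notin y$, and hence $a \to jb \notin x$.

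For ``$\supseteq$'': suppose $a \to jb \notin x$. Apply the prime filter theorem in the frame $jL$ to the filter generated by $\{jc \mid c \in x\} \cup \{ja\}$, separating it from the ideal $\downset jb$. Disjointness holds because $j(c \wedge a) \leq jb$ gives $c \leq a \to jb$, and this would force $a \to jb \in x$. Pulling the resulting prime filter of $jL$ back along $j$ gives a prime filter $y \supseteq x$ with $a \in y$ and $jb \notin y$. Moreover $j^{-1}[y] \subseteq y$, since $jc \in y$ gives $jjc = jc \in y$ and hence $c \in y$; so $y \in \alpha(j)$.

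\emph{Step 2: $\alpha^{-1}\alpha = \mathrm{id}$, and $\alpha$ is order reversing.} Using the same separation argument in $jL$, we check that $\sigma(b) \cap \alpha(j) \subseteq \sigma(a)$ iff $b \leq ja$. Hence the join in the formula for $\alpha^{-1}$ equals $ja$. Order reversal of $\alpha$ follows directly: $j \leq k$ implies $k^{-1}[x] \subseteq j^{-1}[x]$, so $\alpha(k) \subseteq \alpha(j)$. Together with $\alpha^{-1}\alpha = \mathrm{id}$, this shows $\alpha$ is an order embedding.

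\emph{Step 3: surjectivity.} This is the main obstacle. Let $N$ be nuclear. The set $N \setminus \sigma(a) = N \cap (X \setminus \sigma(a))$ is the intersection of $N$ with a clopen set, so by nuclearity $D_a := \downset(N \setminus \sigma(a))$ is a clopen downset. Thus $X \setminus D_a$ is a clopen upset, say $\sigma(j_N a)$. Since $\sigma(b) \cap N \subseteq \sigma(a)$ iff $\sigma(b) \cap D_a = \varnothing$, we get $j_N a = \alpha^{-1}(N)(a)$.

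The nucleus axioms are then direct:
\begin{itemize}
\item $a \leq j_N a$, since $\sigma(a)$ is an upset disjoint from $D_a$.
\item Meets are preserved, since $D_{a \wedge b} = D_a \cup D_b$.
\item Idempotence: $N \cap \sigma(j_N a) = N \cap \sigma(a)$, so $D_{j_N a} = D_a$.
\end{itemize}

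Finally we show $\alpha(j_N) = N$. For $x \in N$ the identity $N \cap \sigma(j_N a) = N \cap \sigma(a)$ gives $j_N^{-1}[x] \subseteq x$. Conversely, suppose $x \notin N$. Since $N$ is closed, there is a basic clopen set $\sigma(a) \setminus \sigma(b)$ containing $x$ and missing $N$. Then $\sigma(a) \cap N \subseteq \sigma(b)$, so $a \leq j_N b$, and hence $j_N b \in x$ while $b \notin x$. Therefore $x \notin \alpha(j_N)$.
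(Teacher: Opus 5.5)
Your proof is correct. The paper gives no proof of its own: it cites \cite{BG07}, and it quotes \cite[Lem.~25]{BG07} for the identification of $X_j$ with the \L-space of $jL$. Your route matches this, since you realize $\alpha(j)$ as the image of the dual of $j : L \to jL$ by pulling back prime filters, and you recover the nucleus from a nuclear $N$ via $\sigma(j_N a)=X\setminus\downset(N\setminus\sigma(a))$.

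There is one slip to fix in Step~2. From $j\le k$ you get $j^{-1}[x]\subseteq k^{-1}[x]$, not the reverse inclusion you wrote, and it is this inclusion that gives $\alpha(k)\subseteq\alpha(j)$.

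Step~2 also needs one more line to conclude that $\alpha$ reflects the order, because injectivity plus order reversal does not give this. The missing fact is that $N\mapsto\alpha^{-1}(N)$ is antitone. Then $\alpha(k)\subseteq\alpha(j)$ gives $j=\alpha^{-1}\alpha(j)\le\alpha^{-1}\alpha(k)=k$.
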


\begin{remark}\label{rem:nuclear-and-priestley}
Let $L$ be a frame, $X$ the \L-space of  $L$, and $j \in \N(L)$. To simplify notation, we write $X_j$ for $\alpha(j)$. By \cite[Lem.~25]{BG07}, $X_j$ is order-homeomorphic to the \L-space of $jL$. 
Therefore, the localic part $\loc X_j$ corresponds to the space of points of $jL$. 
By \cite[Prop.~4.9(1)]{BBM25}, $\loc X_j = X_j \cap \loc X$.
\end{remark}

Recall that a subset $S$ of a poset $X$ is \emph{cofinal} if $X = \downset S$.
If $X$ is a Priestley space, then $S$ is cofinal iff $\max X \subseteq S$ (see \cref{lem:min-max-nonempty}).
We have the following 
characterization of dense nuclei, which will be used later on. 

\begin{theorem}[{\cite[Thms.~23(2) and 28(2)]{BG07}}]
    Let $L$ be a frame, $X$ its \L-space, and ${j \in \N(L)}$.
       Then $j$ is dense iff $X_j$ is cofinal.
        \label{thm:dense}
\end{theorem}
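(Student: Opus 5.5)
The plan is to translate density of $j$ into a condition on $X_j$ using the formula for $\alpha^{-1}$ in \cref{thm:nuclei-and-sublocales}. Identify $L$ with $\clopup(X)$ and write $N = X_j$. Then $j = \alpha^{-1}(N)$, and since $\sigma(0)=\varnothing$,
\[
j0 = \bigvee\{b \in L \mid \sigma(b) \cap N = \varnothing\}.
\]
Hence $j0 = 0$ iff every $b \neq 0$ has $\sigma(b)\cap N \neq \varnothing$. In other words, $j$ is dense iff every nonempty clopen upset of $X$ meets $N$. It then remains to show that this condition is equivalent to $\max X \subseteq N$. Since $X$ is a Priestley space, \cref{lem:min-max-nonempty} shows that $\max X \subseteq N$ is the same as $N$ being cofinal.

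For the direction ($\Leftarrow$), assume $\max X \subseteq N$ and let $U$ be a nonempty clopen upset. Pick $x \in U$. By \cref{lem:min-max-nonempty} applied to the closed set $X$, there is a point of $\max X$ above $x$. Since $U$ is an upset, this point lies in $U$, and it lies in $N$ by assumption. So $U \cap N \neq \varnothing$, and therefore $j0 = 0$.

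For the direction ($\Rightarrow$), argue by contraposition. Suppose $x \in \max X$ but $x \notin N$. Since $N$ is closed, $\downset N$ is closed by \cref{lem:up-down-closed}. Moreover $x \notin \downset N$: if $x \le n$ for some $n\in N$, then maximality of $x$ gives $x = n \in N$, a contradiction.

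The main (though routine) step is to produce a clopen upset containing $x$ and disjoint from $\downset N$. For each $y \in \downset N$ we have $x \nleq y$, because otherwise $x \le y \le n$ for some $n \in N$. So Priestley separation gives a clopen upset $U_y$ with $x \in U_y$ and $y \notin U_y$. The complements $X\setminus U_y$ cover the compact set $\downset N$, so finitely many of them suffice. The intersection $U$ of the corresponding finitely many $U_y$ is then a clopen upset with $x \in U$ and $U \cap \downset N = \varnothing$, so in particular $U \cap N = \varnothing$.

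Taking $b \in L$ with $\sigma(b) = U$, we get $b \neq 0$ and $b \le j0$, so $j$ is not dense. This completes the equivalence.
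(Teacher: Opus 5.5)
Your proof is correct, and it differs from the paper mainly because the paper gives no argument of its own: it imports the result from \cite[Thms.~23(2) and 28(2)]{BG07}. Your version is a self-contained derivation inside the paper's framework. You use the inverse formula of \cref{thm:nuclei-and-sublocales} to get $j0=\bigvee\{b \mid \sigma(b)\cap X_j=\varnothing\}$, which reduces density to ``every nonempty clopen upset meets $X_j$''. The rest is a Priestley separation plus compactness argument, and that part is a general fact about closed subsets of a Priestley space. Nuclearity is not used there.

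A slightly shorter route uses the other half of the isomorphism, $X_j=\{x \mid j^{-1}[x]\subseteq x\}$, together with the nucleus axioms directly.
If $\max X\subseteq X_j$ and $j0\neq 0$, pick $x\in\sigma(j0)$ and a maximal $m\geq x$ (by \cref{lem:min-max-nonempty}). Then $j0\in m$, and since $m\in X_j$, also $0\in m$, which is impossible.
Conversely, suppose $j0=0$, $m\in\max X$ and $a\notin m$. Maximality gives $m\notin\downset\sigma(a)$, so $a^*\in m$ because $\sigma(a^*)=X\setminus\downset\sigma(a)$. If $ja\in m$, then $0=j(a^*\wedge a)=j(a^*)\wedge ja\in m$, a contradiction. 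Hence $j^{-1}[m]\subseteq m$.

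That version avoids the covering argument entirely. Yours has the advantage of isolating the useful intermediate characterization: a nucleus is dense exactly when its nuclear subset meets every nonempty clopen upset.
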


\begin{remark} \label{rem:Isbell-density-dually}
\cref{thm:dense} paves the way for the Priestley approach to Isbell's Density Theorem: 
$\max X$ is a cofinal nuclear subset of $X$, which is the Priestley dual of the double-negation nucleus. Thus, $\max X$ is the $\L$-space of the booleanization $\B(L)$. Clearly, $\max X$ is the least among all cofinal nuclear subsets of $X$, yielding Isbell's Density Theorem (see \cite[Cor.~3.18]{BBM25}).
\end{remark}

The following lemma gives a convenient characterization of the order on $jL$ in terms of its corresponding nuclear subset.

\begin{lemma}[{\cite[Lem.~3.10(3)]{BBM25}}] \label{lem:nuc-leq}
    Let $L$ be a frame, $X$ its \L-space, $j \in \N(L)$, and $a,b \in L$. Then $ja \leq jb$ iff $\sigma(a) \cap X_j \subseteq \sigma(b)$.
\end{lemma}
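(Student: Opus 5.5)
The plan is to use \cref{thm:nuclei-and-sublocales} directly. Since $\alpha$ is a dual isomorphism with the stated inverse, and $X_j = \alpha(j)$, we have $j = \alpha^{-1}(X_j)$. Concretely, for every $b \in L$,
\[
jb = \bigvee\{c \in L \mid \sigma(c) \cap X_j \subseteq \sigma(b)\}.
\]
Both directions of the equivalence will follow from this formula, from the defining property $j^{-1}[x] \subseteq x$ of points $x \in X_j$, and from the nucleus axioms. I expect no step to be a real obstacle. The only point needing care is to invoke $\alpha^{-1}\alpha = \mathrm{id}$ from \cref{thm:nuclei-and-sublocales}, rather than trying to recompute $j$ from $X_j$ by hand.

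($\Leftarrow$) Suppose $\sigma(a) \cap X_j \subseteq \sigma(b)$. Then $a$ is one of the elements $c$ in the join above, so $a \leq jb$. Since $j$ is monotone and idempotent, we get $ja \leq jjb = jb$.

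($\Rightarrow$) Suppose $ja \leq jb$. First I will show that $\sigma(jb) \cap X_j \subseteq \sigma(b)$. Let $x \in X_j$ with $jb \in x$. Then $b \in j^{-1}[x]$, and $j^{-1}[x] \subseteq x$ because $x \in X_j$; hence $b \in x$. Next, from $a \leq ja \leq jb$ and monotonicity of $\sigma$ we have $\sigma(a) \subseteq \sigma(jb)$. Combining the two inclusions,
\[
\sigma(a) \cap X_j \subseteq \sigma(jb) \cap X_j \subseteq \sigma(b),
\]
which completes the argument.
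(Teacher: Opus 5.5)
Your proof is correct. The paper does not prove this lemma itself; it quotes it from \cite[Lem.~3.10(3)]{BBM25}, so there is no in-paper argument to compare with. Your derivation from \cref{thm:nuclei-and-sublocales} is a sound, self-contained justification.

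Your ($\Rightarrow$) direction uses only the defining condition $j^{-1}[x]\subseteq x$ for points of $X_j$, together with $a \le ja$. It shows $\sigma(jb)\cap X_j\subseteq\sigma(b)$, and hence $\sigma(jb)\cap X_j=\sigma(b)\cap X_j$.

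All the nontrivial content sits in ($\Leftarrow$). There you use $\alpha^{-1}(\alpha(j))=j$, that is, the fact that $X_j$ determines $j$. The same step could be phrased through the equivalent formula $\sigma(jb)=X\setminus\downset(X_j\setminus\sigma(b))$. However, invoking the explicit inverse of $\alpha$, as you do, is the most direct route given what the paper states.
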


We are ready to give a Priestley characterization of Scott-continuous nuclei on continuous frames.

\begin{theorem} \label{prop:nuclei-continuous}
    Let $L$ be a continuous frame, $X$ its \L-space, and $j \in \N(L)$. Then
    $j$ is Scott continuous iff $\upset (F \cap X_j) \in \SUp(X)$ for each  $F \in \SUp(X)$. 
\end{theorem}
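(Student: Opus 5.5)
The plan is to combine the equivalence \tref{lem:finitary}$\Leftrightarrow$\tref{lem:finitary-and-filters} of \cref{lem:finitary-iff-inductive} with the isomorphism of \cref{thm:HM-Priestley} between Scott-open filters and Scott upsets. The key point is to identify the closed upset corresponding to $j^{-1}[\filter{F}]$. So fix $\filter{F} \in \Filt(L)$ with $F = \Sigma(\filter{F})$. I claim that
\[
\Sigma(j^{-1}[\filter{F}]) = \upset(F \cap X_j).
\]
Once this is established, the theorem follows at once. If $j$ is Scott continuous, then $j^{-1}[\filter{F}] \in \SFilt(L)$ for every Scott-open $\filter{F}$, so $\upset(F\cap X_j) \in \SUp(X)$. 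Conversely, if $\upset(F \cap X_j) \in \SUp(X)$ for each Scott upset $F$, then $j^{-1}[\filter{F}]$ is Scott open, and \cref{lem:finitary-iff-inductive} applies.

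To prove the claim, first note that $\upset(F\cap X_j)$ is a closed upset. Indeed, $F \cap X_j$ is closed, and \cref{lem:up-down-closed} makes its upset closed. Since closed upsets correspond bijectively to filters via $\Phi$ and $\Sigma$ (\cref{rem:closed-upsets-and-filters}), it suffices to check that
\[
\Phi(\upset(F \cap X_j)) = j^{-1}[\filter{F}],
\]
that is, $F \cap X_j \subseteq \sigma(a)$ iff $ja \in \filter{F}$. I will do this through the nucleus on the dual side.

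For the direction ($\Leftarrow$), suppose $ja \in \filter{F}$ and take $x \in F \cap X_j$. Then $ja \in x$ because $F \subseteq \sigma(ja)$. Since $x \in X_j$ means $j^{-1}[x] \subseteq x$, we get $a \in x$.

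For the direction ($\Rightarrow$), suppose $F \cap X_j \subseteq \sigma(a)$. Then
\[
F \subseteq \sigma(ja) = \bigcap\{\,\ldots\,\}.
\]
To show this, use that $\sigma(ja)$ is the clopen upset $\downset$-described via \cref{thm:nuclei-and-sublocales}; concretely, $\sigma(ja) = X \setminus \downset(X_j \setminus \sigma(a))$, which is the standard description of the nucleus from the nuclear set. Then take $y \in F$ and suppose $y \le x$ with $x \in X_j \setminus \sigma(a)$. Since $F$ is an upset, $x \in F \cap X_j \subseteq \sigma(a)$, which is a contradiction. Hence $F \subseteq \sigma(ja)$, and since $\filter{F} = \Phi(F)$, we conclude $ja \in \filter{F}$.

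The main obstacle is justifying the formula $\sigma(ja) = X\setminus\downset(X_j\setminus\sigma(a))$ from the stated results. I would derive it from \cref{lem:nuc-leq}. On the one hand, $\sigma(ja)\cap X_j = \sigma(a) \cap X_j$, because $jja = ja$ and by \cref{lem:nuc-leq}. On the other hand, $ja$ is the largest $b$ with $\sigma(b) \cap X_j \subseteq \sigma(a)$, by \cref{thm:nuclei-and-sublocales}. The complement of $\downset(X_j \setminus \sigma(a))$ is a clopen upset, using nuclearity applied to the clopen set $X\setminus\sigma(a)$. It meets $X_j$ only inside $\sigma(a)$, so it lies below $\sigma(ja)$. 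Conversely, $\sigma(ja)$ is an upset whose intersection with $X_j$ lies in $\sigma(a)$, so it misses $\downset(X_j\setminus\sigma(a))$. Actually, for the argument above only the inclusion $X\setminus\downset(X_j\setminus\sigma(a)) \subseteq \sigma(ja)$ is needed, and that inclusion comes from maximality together with clopenness.
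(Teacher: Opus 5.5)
Your proposal is correct and follows the paper's proof. Both establish $\Phi(\upset(F\cap X_j)) = j^{-1}[\Phi(F)]$ and then combine \cref{lem:finitary-and-filters} with the correspondence between Scott-open filters and Scott upsets. The one local difference is in deriving $ja\in\Phi(F)$ from $F\cap X_j\subseteq\sigma(a)$: the paper uses compactness to find $b\in\Phi(F)$ with $\sigma(b)\cap X_j\subseteq\sigma(a)$ and then applies \cref{lem:nuc-leq}, whereas you use nuclearity of $X_j$ to show $X\setminus\downset(X_j\setminus\sigma(a))\subseteq\sigma(ja)$. Your route is equally valid; just delete the dangling ``$=\bigcap\{\ldots\}$'' placeholder.
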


\begin{proof}
    Let $F \in \clup(X)$. Since $X_j$ is closed, $\upset (F \cap X_j)$ is a closed upset. 
    Recalling the isomorphism $\Phi: \clup(X) \to \Filt(L)$ (see \cref{rem:closed-upsets-and-filters}), we first show that 
    \begin{equation}
        \Phi(\upset (F \cap X_j)) = j^{-1}[\Phi(F)]. \label{eqB}
    \end{equation}
    Let $a \in L$. 
    Then
    \begin{align*}
        a \in \Phi(\upset (F \cap X_j)) &\iff \upset (F \cap X_j) \subseteq \sigma(a)\\
        &\iff F \cap X_j \subseteq \sigma(a) &&\text{since $\sigma(a)$ is an upset}\\
        &\iff \exists b \in \Phi(F) : \sigma(b) \cap X_j \subseteq \sigma(a) &&\text{by compactness and \cref{lem:intersections-of-clopens}} \\
        &\iff \exists b \in \Phi(F) :  jb \leq ja &&\text{by \cref{lem:nuc-leq}} \\
        &\iff \exists b \in \Phi(F) :  b \leq ja\\
        &\iff ja \in \Phi(F) &&\text{since $\Phi(F)$ is a filter}\\
        &\iff a \in j^{-1}[\Phi(F)].
    \end{align*}

    ($\Rightarrow$) Let $j$ be Scott continuous and $F \in \SUp(X)$. Then $\Phi(F) \in \SFilt(L)$ 
    by \cref{rem:closed-upsets-and-filters}.
    Therefore, $j^{-1}[\Phi(F)] \in \SFilt(L)$ by \cref{lem:finitary-and-filters}. But $j^{-1}[\Phi(F)] = \Phi(\upset F \cap X_j))$ by \eqref{eqB}, so  $\upset (F \cap X_j) \in \SUp(X)$ by 
    \cref{rem:closed-upsets-and-filters}.
    
    ($\Leftarrow$) By \cref{lem:finitary-and-filters}, it suffices to show that $j^{-1}[\filter{F}] \in \SFilt(L)$ for each $\filter{F} \in \SFilt(L)$. Let $\filter{F} \in \SFilt(L)$. Then $\Sigma(\filter{F}) \in \SUp(X)$ 
    by \cref{rem:closed-upsets-and-filters}, 
    and hence $\upset(\Sigma(\filter{F}) \cap X_j)\in \SUp(X)$ by assumption. 
    Thus, 
    by \cref{rem:closed-upsets-and-filters} and \eqref{eqB}, 
    \[
    j^{-1}[\filter{F}]  = 
    j^{-1}[\Phi(\Sigma(\filter{F}))] =  \Phi(\upset(\Sigma(\filter{F}) \cap X_j)) \in \SFilt(L). \qedhere
    \]
\end{proof}

Putting \cref{rem:finitary-iff-inductive,prop:nuclei-continuous} together yields:

\begin{corollary}[{\cite[Thm.~4.6]{BBM25}}]
    Let $L$ be an algebraic frame, $X$ its \L-space, and ${j \in \N(L)}$. Then $j$ is inductive iff $\upset(F \cap X_j) \in \SUp(X)$ for each $F \in \SUp(X)$.
\end{corollary}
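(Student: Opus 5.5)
This corollary follows by combining two earlier results, so the plan is to chain them rather than argue anything new. Let $L$ be an algebraic frame and $j \in \N(L)$.

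First, I will observe that every algebraic frame is continuous. By \cref{lem:ll-and-alg}, any $k \in \K(L)$ with $k \le a$ satisfies $k \ll a$. Each $a$ is a join of such compact elements, so each $a$ is a join of elements way below it. This places us in the setting of \cref{prop:nuclei-continuous}.

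Next, I will invoke \cref{lem:finitary-iff-inductive}. Since $L$ is algebraic, the last clause of that lemma makes condition \tref{lem:cts-inductive} equivalent to condition \tref{lem:finitary}. That is, $j$ is inductive iff $j$ is Scott continuous, as recorded in \cref{rem:finitary-iff-inductive}.

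Finally, \cref{prop:nuclei-continuous}, applied to the continuous frame $L$, states that $j$ is Scott continuous iff $\upset(F \cap X_j) \in \SUp(X)$ for each $F \in \SUp(X)$. Chaining the two equivalences gives the claim.

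The only step needing any care is the first one: checking that algebraic frames are continuous, so that \cref{prop:nuclei-continuous} applies. This is immediate from \cref{lem:ll-and-alg}, so I do not expect a genuine obstacle. The substantive work has already been done in \cref{prop:nuclei-continuous} and in the cited equivalence of Scott continuity and inductiveness.
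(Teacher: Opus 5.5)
Your proposal is correct and matches the paper's argument. The paper derives the corollary in one line by combining \cref{rem:finitary-iff-inductive} (inductive iff Scott continuous on an algebraic frame) with \cref{prop:nuclei-continuous}; your explicit check that algebraic frames are continuous is the step the paper leaves implicit.
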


\section{The \texorpdfstring{$\d$}{d}-nucleus on a stably continuous frame} \label{sec:d-nucleus}
As we pointed out in the introduction, the $d$-nucleus was introduced by Mart\'inez and Zenk \cite{MZ03} as an abstraction of the theory of $d$-ideals in Riesz spaces (see, e.g., \cite{HdP80a,HdP80b,dP81,HdP83}). 
For an arithmetic frame, the $d$-nucleus is the largest dense inductive nucleus \cite[Rem.~5.1(iv)]{MZ03}, and hence the largest Scott-continuous nucleus beneath the double-negation nucleus (see  \cref{rem:finitary-iff-inductive}). 
Earlier, Escard\'o \cite{Esc98} studied  continuous coreflections of nuclei. Applying his technique to the double-negation nucleus on a stably continuous frame (see \cite[Thm.~3.1.15]{Esc98}) gives rise to the $\d$-nucleus, which is a natural generalization of the $d$-nucleus of Mart\'inez and Zenk to the setting of stably continuous frames.  

In this section, we recall the relevant properties of both $d$ and $\d$, and then use Priestley duality to characterize the nuclear subset corresponding to $\d$ and its space of points. We also provide a new description of the Gleason cover of a compact Hausdorff space $X$ in terms of compact $d$-elements of the frame of ideals of $\Omega(X)$.

\begin{definition}[{\cite[Def.~5.1]{MZ03}}]
For a frame $L$, define $d : L \to L$ by
    \[
    da = \bigvee\{k^{**} \mid k \in \K(L) \text{ and } k \leq a\}.
    \]
Elements in the image $dL$ are called \emph{$d$-elements}.
\end{definition}

\begin{proposition}[{\cite[Rem.~5.1]{MZ03}}]
    Let $L$ be a frame.
    \begin{enumerate}[cref=proposition]
        \item If $L$ is algebraic, then $d$ is a closure operator on $L$.
        \item\label{prop:arithmetic-makes-d-nucleus}\label{thm:d-largest-inductive}
        If $L$ is arithmetic, then $d$ is a nucleus on $L$, which is 
    the largest Scott-continuous nucleus beneath the double-negation nucleus.
    \end{enumerate}
\end{proposition}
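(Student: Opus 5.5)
We prove the two parts in order, working directly in $L$ and using \cref{lem:finitary-iff-inductive} together with \cref{lem:ll-and-alg} to pass between compact elements and the way-below relation.

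\emph{Part (1).} Let $L$ be algebraic. Inflationarity comes from algebraicity: $a=\bigvee\{k\in\K(L)\mid k\le a\}$ and $k\le k^{**}$, so $a\le da$. Monotonicity is immediate, since the indexing set only grows with $a$. For idempotence we must show $dda\le da$. Take compact $k\le da=\bigvee\{m^{**}\mid m\in\K(L),\ m\le a\}$. The indexing set is directed, because $m_1\vee m_2$ is compact and lies below $a$. Compactness of $k$ therefore gives $k\le m^{**}$ for a single compact $m\le a$. Then $k^{**}\le m^{****}=m^{**}\le da$, and taking joins over $k$ yields $dda\le da$.

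\emph{Part (2).} Let $L$ be arithmetic. It remains to prove binary meet preservation; the inequality $d(a\wedge b)\le da\wedge db$ follows from monotonicity. For the converse, expand using the frame law:
\[
da\wedge db=\bigvee\{k^{**}\wedge m^{**}\mid k,m\in\K(L),\ k\le a,\ m\le b\}.
\]
Since $^{**}$ preserves binary meets, $k^{**}\wedge m^{**}=(k\wedge m)^{**}$. Since $L$ is arithmetic, $k\wedge m\in\K(L)$, and $k\wedge m\le a\wedge b$. Each joinand is therefore one of the terms defining $d(a\wedge b)$, so $da\wedge db\le d(a\wedge b)$. Hence $d$ is a nucleus.

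Scott continuity of $d$ holds by \cref{lem:cts-inductive}. Write $d'$ for the restriction of $d$ to compact arguments, $d'k=k^{**}$. By idempotence and monotonicity $d k\ge k^{**}$, and conversely $dk\le k^{**}$ because every compact $k'\le k$ has $k'^{**}\le k^{**}$. So $dk=k^{**}$, and then $da=\bigvee\{dk\mid k\in\K(L),\ k\le a\}$, which is exactly condition \tref{lem:cts-inductive}. Also $da\le a^{**}$, since each $k\le a$ gives $k^{**}\le a^{**}$. Thus $d$ lies below the double-negation nucleus.

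For maximality, let $j$ be any Scott-continuous nucleus with $j\le(\ )^{**}$. By \cref{lem:finitary-iff-inductive} in the algebraic case, $ja=\bigvee\{jk\mid k\in\K(L),\ k\le a\}$. For each such $k$ we have $jk\le k^{**}=dk$, so $ja\le da$.

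The only step needing care is idempotence in (1), specifically the directedness argument that lets compactness of $k$ select a single $m^{**}$. It relies on $\K(L)$ being closed under finite joins, which holds in any frame. Everything else is routine.
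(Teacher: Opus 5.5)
Your proof is correct. The paper gives no proof of this statement; it imports it from Mart\'inez--Zenk, so your argument is a self-contained verification rather than a reconstruction of an in-paper proof.

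Within the paper, part (2) could instead be read off from \cref{c-2,c-3}. An arithmetic frame is both algebraic and stably continuous, so $d=\d$, and $\d$ is Escard\'o's continuous coreflection of the double-negation nucleus. That route gets the nucleus axioms and maximality from Escard\'o's general theory, and it works uniformly for all stably continuous frames. Your route is elementary, and it makes clear that arithmeticity is used exactly once: in meet preservation, through $k\wedge m\in\K(L)$. Everything else holds in any algebraic frame: inflationarity, idempotence, the inductive condition of \cref{lem:cts-inductive}, and the bound $j\le d$ for Scott-continuous nuclei $j$ below double negation.

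A few small points, none of which affect validity:
\begin{itemize}
\item $dk\geq k^{**}$ follows directly from the definition, since $k$ is itself a compact element below $k$. It does not come from idempotence and monotonicity.
\item The notation $d'$ is introduced but never used.
\item The maximality step does not need \cref{lem:finitary-iff-inductive}. Since $a$ is the directed join of the compact elements below it, Scott continuity of $j$ gives $ja=\bigvee\{jk\mid k\in\K(L),\ k\le a\}$ immediately.
\end{itemize}
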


\begin{remark}
    \label{rem:KdL=dKL} It is immediate from the definition that $dk = k^{**}$ for each $k \in \K(L)$ and that $da \leq a^{**}$ for each $a \in L$. 
In addition, if $L$ is an arithmetic frame, then $\K(dL) = d[\K(L)]$ (see \cite[Lem.~4.2]{MZ03}). This will be used in \cref{thm:Gleason}.
\end{remark} 

We next generalize $d$ by replacing compact elements with the way-below relation.

\begin{definition}
    Let $L$ be a frame. Define $\d : L \to L$ by $$\d a = \bigvee\{b^{**} \mid b \ll a\}.$$    
\end{definition}

As we pointed out in the introduction, if $L$ is stably continuous then $\d$ is precisely the continuous coreflection of the double-negation nucleus studied by Escard\'o \cite[Thm.~3.1.15]{Esc98}. 
We collect the basic properties of $\d$ in the following proposition.

\begin{proposition}\label{thm:d-on-stably-cont-L}
    Let $L$ be a frame.
    \begin{enumerate}[cref=proposition]
        \item
        \label{c-4}\label{d-order-pres} $\d$ is monotone and
        $
            da \leq \d a \leq a^{**}
        $ 
        for each $a \in L$.
        
        \item If $L$ is continuous, then $\d$ is a closure operator on $L$.
        \label{c-1}
        
        \item If $L$ is stably continuous, then $\d$ is the largest Scott-continuous nucleus below the double-negation nucleus.
        \label{c-2}\label{largest-finitary}
        
        \item If $L$ is algebraic, then $\d=d$.
        \label{c-3}
    \end{enumerate}
\end{proposition}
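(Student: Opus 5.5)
The four items are proved in order, with (3) carrying the real content.

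For (1), monotonicity is immediate: if $a \le a'$, then $b \ll a$ implies $b \ll a'$ by \cref{lem:weakening-2}, so the join defining $\d a$ is taken over a subset of the one defining $\d a'$. For the inequality $da \le \d a$, take a compact $k \le a$. Then $k \ll k \le a$, so $k \ll a$ by \cref{lem:weakening-2}, and $k^{**}$ is one of the terms in the join for $\d a$. For $\d a \le a^{**}$, note that $b \ll a$ gives $b \le a$ by \cref{lem:weakening-1}, hence $b^{**} \le a^{**}$.

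For (2), assume $L$ is continuous. Extensivity follows since $a = \bigvee\{b \mid b \ll a\} \le \bigvee\{b^{**} \mid b \ll a\} = \d a$; monotonicity is (1). For idempotence we must show $\d\d a \le \d a$. Let $c \ll \d a = \bigvee\{b^{**} \mid b \ll a\}$. By interpolation (\cref{lem:interpolating}), the set $\{b \mid b \ll a\}$ is directed, so $\{b^{**}\mid b\ll a\}$ is directed as well. Hence $c \le b^{**}$ for a single $b \ll a$, and so $c^{**} \le b^{**} \le \d a$. Taking the join over all $c \ll \d a$ gives $\d\d a \le \d a$.

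For (3), assume $L$ is stably continuous. The main step is showing that $\d$ preserves binary meets; it suffices to prove $\d a \wedge \d a' \le \d(a\wedge a')$. By the frame law,
\[
\d a \wedge \d a' = \bigvee\{b^{**}\wedge b'^{**} \mid b \ll a,\ b'\ll a'\}.
\]
Each term satisfies $b^{**}\wedge b'^{**} = (b\wedge b')^{**}$. Moreover $b\wedge b' \ll a$ and $b\wedge b' \ll a'$ by \cref{lem:weakening-2}, so stability gives $b\wedge b' \ll a\wedge a'$. Hence every term lies below $\d(a\wedge a')$. This is the only place stability enters, and it is the step I would check most carefully.

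Scott continuity comes from \cref{lem:preinductive}: we need $\d a = \bigvee\{\d b \mid b\ll a\}$. The inequality $\ge$ is monotonicity. For $\le$, given $c \ll a$, interpolate $c\ll b\ll a$; then $c^{**}\le \d b$, so each term of $\d a$ lies below the right-hand side. Together with (1) this shows $\d$ is a Scott-continuous nucleus below the double-negation nucleus.

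For maximality, let $j$ be a Scott-continuous nucleus with $j \le (\cdot)^{**}$. By \cref{lem:preinductive},
\[
ja = \bigvee\{jb \mid b\ll a\} \le \bigvee\{b^{**} \mid b\ll a\} = \d a.
\]
Alternatively, one can simply cite Escardó's continuous coreflection \cite[Thm.~3.1.15]{Esc98}.

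For (4), let $L$ be algebraic. By \cref{lem:ll-and-alg}, $b\ll a$ exactly when $b\le k\le a$ for some compact $k$. In that case $b^{**}\le k^{**}\le da$, so $\d a\le da$, and the reverse inequality is (1).
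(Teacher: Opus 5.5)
Your proposal is correct. Items (1) and (4) match the paper's argument. The difference is in (2) and (3): the paper gives no argument of its own there. It cites Escard\'o's Lemma~3.1.13 and Theorem~3.1.15 and identifies $\d$ as the continuous coreflection of the double-negation nucleus.

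You instead verify everything directly:
\begin{itemize}
\item extensivity from continuity;
\item idempotence from directedness of $\{b^{**} \mid b \ll a\}$;
\item meet preservation from the frame law, the identity $(b\wedge b_1)^{**} = b^{**}\wedge b_1^{**}$, and stability;
\item Scott continuity and maximality from the characterization $ja = \bigvee\{jb \mid b \ll a\}$ in \cref{lem:finitary-iff-inductive}.
\end{itemize}
All of these steps check out.

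The citation buys brevity and places $\d$ inside a general theory. Your route buys a self-contained proof that shows where each hypothesis enters. Continuity is used through extensivity, interpolation and the characterization lemma. Stability is used only for binary meets. Compactness is used nowhere, which confirms the paper's footnote that Escard\'o's compactness assumption is not needed. Your argument also uses nothing about double negation beyond its being a nucleus. So the same proof shows that $a \mapsto \bigvee\{jb \mid b \ll a\}$ is the largest Scott-continuous nucleus below an arbitrary nucleus $j$ on a stably continuous frame.

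One justification needs correcting. Directedness of $\{b \mid b \ll a\}$ does not follow from interpolation. It holds in any complete lattice because $0 \ll a$, and $b_1, b_2 \ll a$ implies $b_1 \vee b_2 \ll a$ (take the union of the two finite subfamilies). With that fix the idempotence step goes through, and it in fact needs no continuity at all.
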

\begin{proof}
    \tref{c-4} That $\d$ is monotone is straightforward. If $k \in \K(L)$ with $k \leq a$ then $k \ll a$, so $k^{**} \leq \d a$. Therefore, $d a \leq \d a$.
    Moreover, if $b \ll a$ then $b \leq a$ (see \cref{lem:weakening-1}). Thus, $b^{**} \leq a^{**}$, and hence $\d a \leq a^{**}$.
    
    \tref{c-1} See \cite[Lem.~3.1.13]{Esc98}. 
    
    \tref{c-2} See \cite[Thm.~3.1.15]{Esc98}.

    \tref{c-3} By \tref{c-4}, it suffices to show that $\d a \leq da$. Since $L$ is algebraic, $b \ll a$ iff there is $k \in \K(L)$ with $b \leq k \leq a$ (see \cref{lem:ll-and-alg}).
    Hence, if $b \ll a$ then $b^{**} \leq k^{**} \leq da$. Thus, $\d a \leq da$.
\end{proof}

We now give 
a dual description of the $\d$-nucleus. Let $X$ be the \L-space of a frame $L$. It follows from \cite[Prop.~2]{Wig79} that
\[
    \sigma\!\left(\bigvee S\right) = \cl \bigcup \{\sigma(s) \mid s \in S\}
\]
for each $S \subseteq L$. 
Therefore, identifying a stably continuous frame $L$ with $\clopup(X)$, we have:
\[
    \d U = \cl \bigcup\{V^{**} \mid V \in \clopup(X),\, V \ll U\}.
\]
We first provide several alternative descriptions of $\d U$. 
For this, we recall the following lemma.

\begin{lemma}[{\cite[Prop.~5.2]{PS00}; see also \cite[Lem.~5.7]{BM23}}]
    Let $X$ be a continuous \L-space and $U,V \in \clopup(X)$. Then
        $V \ll U$ iff there exists $F \in \SUp(X)$ such that $V \subseteq F \subseteq U$. \label{lem:Scott-inbetween}
\end{lemma}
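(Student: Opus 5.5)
The plan is to pass to the frame side. Identify $L$ with $\clopup(X)$ and use the isomorphism $\Phi\colon \clup(X)\to\Filt(L)$, with inverse $\Sigma$, which restricts to an isomorphism between Scott upsets and Scott-open filters (\cref{thm:HM-Priestley,rem:closed-upsets-and-filters}). Under this translation the statement becomes the frame-theoretic fact that $V \ll U$ iff there is a Scott-open filter $\filter{F}$ with $U \in \filter{F} \subseteq \upset V$. By \cref{L-cts-implies-spatial} and \cref{thm:ConLP-dualities}, $L$ is a continuous frame, so \cref{lem:properties-of-way-below} and the Scott-open filter lemmas are available.

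\emph{($\Leftarrow$)} Suppose $F \in \SUp(X)$ with $V \subseteq F \subseteq U$. Then $\Phi(F)$ is a Scott-open filter, and $F \subseteq U$ gives $U \in \Phi(F)$. Let $U \le \bigvee S$ in $L$. Then $\bigvee S \in \Phi(F)$, so by \cref{lem:SFilt-finite-joins} there is a finite $T \subseteq S$ with $\bigvee T \in \Phi(F)$. This means $V \subseteq F \subseteq \sigma(\bigvee T)$, that is, $V \le \bigvee T$. Hence $V \ll U$.

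\emph{($\Rightarrow$)} Suppose $V \ll U$. Using interpolation (\cref{lem:interpolating}) repeatedly, choose a sequence
\[
V \ll \cdots \ll c_{2} \ll c_1 \ll c_0 = U .
\]
Define $\filter{F} = \bigcup_n \upset c_n$. This is an increasing union of principal filters, hence a filter. It contains $U$, and it is contained in $\upset V$ because each $c_n \ge V$.

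To see that $\filter{F}$ is Scott open, let $D$ be directed with $\bigvee D \ge c_n$. Since $c_{n+1} \ll c_n$, some $d \in D$ satisfies $d \ge c_{n+1}$, so $d \in \filter{F}$.

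Now set $F = \Sigma(\filter{F})$. This is a Scott upset, and $F \subseteq U$ because $U \in \filter{F}$. Every $a \in \filter{F}$ satisfies $V \le a$, so $V \subseteq \bigcap_{a\in\filter{F}}\sigma(a) = F$.

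The only delicate step is building a Scott-open filter sandwiched between $U$ and $\upset V$. I handle it with the countable interpolation chain rather than with $\twoheaduparrow$, because \cref{lem:upup-Scott} needs stability, which is not assumed for a merely continuous \L-space.
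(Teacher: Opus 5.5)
Your proof is correct. The translation $F\subseteq U \iff U\in\Phi(F)$ and $V\subseteq F\iff \Phi(F)\subseteq\upset V$ is exact, and the interpolation chain $V\ll\cdots\ll c_2\ll c_1\ll c_0=U$ gives the Scott-open filter $\bigcup_n\upset c_n$ from continuity alone, so you are right that neither stability nor $\twoheaduparrow$ is needed.

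The paper does not prove this lemma itself; it cites it from the literature. Your route is the standard argument behind it: you take the classical Scott-open-filter characterization of $\ll$ in a continuous frame and transport it along $\SFilt(L)\cong\SUp(X)$. One shortcut is available for ($\Leftarrow$): you can get it directly on the Priestley side from $F\ll F$ (\cref{Scott-upsets-property}) and \cref{lem:ll-char}. This also shows that this direction holds in any \L-space.
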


    Let $X$ be an \L-space. 
    For $U \in \clopup(X)$, the pseudocomplement is given by $U^* = X \setminus \downset U$ (see, e.g., \cite[p.~20]{Esa19}), and hence 
    $U^{**} = X \setminus \downset(X \setminus \downset U)$. 
    It will be convenient to use this notation for arbitrary subsets $S \subseteq X$, so we write 
    $S^{**} := X \setminus \downset(X \setminus \downset S)$. 
    Observe that
    $x \in S^{**}$ iff $\upset x \subseteq \downset S$.

\begin{proposition}\label{prop:con-negneg}
    Let $X$ be a continuous \L-space and $U \in \clopup(X)$. Then
    \[
        \bigcup \{V^{**} \mid V \in \clopup(X),\, V \ll U\} = (\con U)^{**}
        = \bigcup \{F^{**} \mid F \in \SUp(X),\, F \subseteq U\}.
    \]
    Consequently,
    \begin{align*}
        \d U = \cl ((\con U)^{**}) = \cl \bigcup \{F^{**} \mid F \in \SUp(X),\, F \subseteq U\}.
    \end{align*}
\end{proposition}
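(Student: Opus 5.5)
The plan is to prove the chain of equalities by showing that the operator $S \mapsto S^{**}$ interacts well with unions of upsets, and then to use \cref{lem:Scott-inbetween} to pass between clopen upsets way below $U$ and Scott upsets contained in $U$.

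The key observation is this. For any family $\{S_i\}$ of upsets whose union is a directed union, we have $x \in (\bigcup S_i)^{**}$ iff $\upset x \subseteq \downset \bigcup S_i$. We would like this to reduce to $\upset x \subseteq \downset S_i$ for a single $i$. That reduction needs compactness of $\upset x$ (which is closed, hence compact) together with openness of each $\downset S_i$. Openness holds by \cref{lem:Esakia-space} when $S_i$ is clopen, and the family $\{V \in \clopup(X) \mid V \ll U\}$ is directed, since $V_1, V_2 \ll U$ implies $V_1 \cup V_2 \ll U$. So, with $\con U = \bigcup\{V \mid V \ll U\}$ a directed union of clopen upsets,
\[
x \in (\con U)^{**} \iff \upset x \subseteq \bigcup_{V \ll U} \downset V \iff \exists V \ll U:\ \upset x \subseteq \downset V \iff x \in \bigcup_{V\ll U} V^{**}.
\]
In the middle step we use that $\downset \bigcup = \bigcup \downset$, that $\upset x$ is compact, and that the cover is directed. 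This gives the first equality.

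For the second equality, we use \cref{lem:Scott-inbetween}. If $V \ll U$, then $V \subseteq F \subseteq U$ for some $F \in \SUp(X)$, so $V^{**} \subseteq F^{**}$ by monotonicity of ${}^{**}$. Conversely, let $F \in \SUp(X)$ with $F \subseteq U$. By \cref{lem:intersections-of-clopens}, $F$ is an intersection of clopen upsets. Since $F$ is compact and $U$ is open, there is a clopen upset $W$ with $F \subseteq W \subseteq U$. However, $W \ll U$ need not hold. The better route is to use $F \ll F$ (\cref{Scott-upsets-property}) together with $F \subseteq \con U$. To see the latter, first note that continuity gives $U = \cl \con U$, with $\con U$ an open upset. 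Then \cref{Scott-upsets-property} applied to $W = \con U$ yields $F \subseteq \con U$. Hence $F^{**} \subseteq (\con U)^{**}$. Together with the first equality, this closes the chain, because each $V \ll U$ is itself contained in $\con U$.

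Finally, the ``consequently'' part follows by taking closures, using the displayed formula $\d U = \cl\bigcup\{V^{**} \mid V \ll U\}$ obtained from Wigner's join formula.

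The main obstacle is the compactness step in the first equality. Directedness of the family $\{V \ll U\}$ must be checked, and clopenness of $\downset V$ is needed to make the cover open. A secondary subtlety is verifying $F \subseteq \con U$, which relies on $\cl\con U = U$ in a continuous \L-space.
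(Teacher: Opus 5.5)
Your argument is correct and is essentially the paper's proof. It uses the same ingredients: directedness of $\{V \in \clopup(X) \mid V \ll U\}$ combined with compactness; \cref{lem:Scott-inbetween} to find a Scott upset between $V$ and $U$; and the fact that $F \subseteq U = \cl \con U$ forces $F \subseteq \con U$ by \cref{Scott-upsets-property}.

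The differences are only organizational:
\begin{itemize}
\item You apply compactness to $\upset x$ with the open cover $\{\downset V\}$, whereas the paper applies it to $\max \upset x$ with the cover $\{V\}$.
\item You close the chain by monotonicity of $(\cdot)^{**}$ instead of repeating the compactness argument, which is slightly more economical.
\item The abandoned detour through a clopen $W$ with $F \subseteq W \subseteq U$ can simply be deleted.
\end{itemize}
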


\begin{proof}
    First, let $x \in \bigcup \{V^{**} \mid V \in \clopup(X),\, V \ll U\}$. Then there is $V \in \clopup(X)$ such that $x \in V^{**}$ and $V \ll U$. Thus, $\upset x \subseteq \downset V \subseteq \downset \con U$, and so $x \in (\con U)^{**}$.

    Next, let $x \in (\con U)^{**}$, 
    so $\upset x \subseteq \downset \con U$, and hence $\max \upset x \subseteq \con U$. 
    Because 
    $\max \upset x$ is closed (see \cref{lem:closed-max}) and so compact, there is 
    a clopen upset
    $V \ll U$ such that $\max \upset x \subseteq V$. By \cref{lem:Scott-inbetween}, there is $F \in \SUp(X)$ such that $\max \upset x \subseteq V \subseteq F \subseteq U$. Thus, $x \in V^{**} \subseteq F^{**}$, and so $x \in \bigcup \{F^{**} \mid F \in \SUp(X),\, F \subseteq U\}$.

    Finally, let $x \in \bigcup \{F^{**} \mid F \in \SUp(X),\, F \subseteq U\}$. Then
    $x \in F^{**}$, so
    $\max \upset x \subseteq F \subseteq U$ for some $F \in \SUp(X)$. Since $X$ is a continuous \L-space, $F \subseteq \cl \con U$, so  ${F \subseteq \con U}$ because $F$ is a Scott upset 
    (see \cref{Scott-upsets-property}).
    Hence, $\max \upset x \subseteq \con U$. By compactness, there is ${V \in \clopup(X)}$ such that $\max \upset x \subseteq V \ll U$. Consequently, $x \in \bigcup \{V^{**} \mid {V \in \clopup(X),\, V \ll U}\}$.
\end{proof}

\begin{theorem} \label{thm:char-of-dU}
    Let $X$ be a stably continuous \L-space and $U \in\clopup(X)$. For $x \in X$, the following are equivalent.
    \begin{enumerate}[cref=theorem]
        \item $x \in \d U$.\label{new-in-dU-1}
        \item $x \in \cl((\con U)^{**})$\label{new-in-dU-2}
        \item If $V$ is a clopen neighborhood of $x$, then there is $y \in V$ with $\max \upset y \subseteq \con U$.\label{new-in-dU-3}
        \item If $D$ is a clopen downset neighborhood of $x$, then\label{new-in-dU-4}
        \[
            D \not\subseteq \downset (\max X \setminus \con U).
        \]
    \end{enumerate}
\end{theorem}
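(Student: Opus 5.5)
The plan is to prove the chain (i)$\Leftrightarrow$(ii)$\Leftrightarrow$(iii)$\Leftrightarrow$(iv).

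\textbf{(i)$\Leftrightarrow$(ii).} This is exactly the formula $\d U = \cl((\con U)^{**})$ from \cref{prop:con-negneg}. It only needs continuity of $X$, which a stably continuous \L-space has.

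\textbf{(ii)$\Leftrightarrow$(iii).} Clopen sets form a basis of the Stone topology. Hence $x \in \cl((\con U)^{**})$ iff every clopen neighborhood $V$ of $x$ meets $(\con U)^{**}$. Recall that $y \in S^{**}$ iff $\upset y \subseteq \downset S$. Since $\con U$ is an upset, I claim
\[
\upset y \subseteq \downset \con U \iff \max\upset y \subseteq \con U.
\]
For ($\Rightarrow$): a maximal point below some point of the upset $\con U$ lies in it. For ($\Leftarrow$): by \cref{lem:min-max-nonempty}, every $z \geq y$ lies below some point of $\max\upset z \subseteq \max\upset y$. So (ii)$\Leftrightarrow$(iii) is immediate.

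\textbf{(iii)$\Leftrightarrow$(iv).} Note that $\max\upset y = \upset y \cap \max X$, so the condition on $y$ in (iii) says that no element of $\max X \setminus \con U$ lies above $y$. That is, $y \notin \downset(\max X\setminus \con U)$.

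So (iii) says: every clopen neighborhood $V$ of $x$ satisfies $V \not\subseteq \downset(\max X \setminus \con U)$. Direction (iii)$\Rightarrow$(iv) is then trivial, since clopen downsets are clopen.

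For (iv)$\Rightarrow$(iii), let $V$ be a clopen neighborhood of $x$. I would set $D := \downset V$, which is a clopen downset by \cref{lem:Esakia-space}. By (iv) there is $z \in D$ with $z \notin \downset(\max X\setminus\con U)$; write $z \leq v$ with $v \in V$. The difficulty is passing from $z$ to a point of $V$ itself: any $y \geq z$ automatically avoids $\downset(\max X\setminus\con U)$, but in general $V \neq \downset V$. Taking $y = v$ works: $\upset v \subseteq \upset z$, so $\max\upset v \subseteq \max\upset z \subseteq \con U$. Hence $v \in V$ witnesses (iii).

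\textbf{Expected obstacle.} The main technical point is the identification $\max\upset y \subseteq \con U \iff y \notin \downset(\max X\setminus\con U)$. It needs $\con U$ to be an upset (a union of upsets) and \cref{lem:min-max-nonempty} to reach maximal points. I would also check whether stability is actually needed anywhere; it seems not, beyond continuity used in \cref{prop:con-negneg}.
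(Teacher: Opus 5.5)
Your proof is correct and is essentially the paper's argument. It uses the same three ingredients: \cref{prop:con-negneg} for (i)$\Leftrightarrow$(ii), the translation $y \in (\con U)^{**} \iff \max\upset y \subseteq \con U \iff y \notin \downset(\max X\setminus \con U)$, and the passage to $D=\downset V$ via \cref{lem:Esakia-space}, with the witness taken to be a point of $V$ lying above the point supplied by (iv). The differences are organizational: you arrange the equivalences as a chain of biconditionals rather than the paper's cycle (2)$\Rightarrow$(3)$\Rightarrow$(4)$\Rightarrow$(2), and you avoid the paper's unneeded appeal to \L-spatiality in (2)$\Rightarrow$(3); you are also right that stability is never used.
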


\begin{proof} 
    \tref{new-in-dU-1}$\Leftrightarrow$\tref{new-in-dU-2} This follows from \cref{prop:con-negneg}.

    \tref{new-in-dU-2}$\Rightarrow$\tref{new-in-dU-3} 
    Let 
    $V$ be a clopen neighborhood of $x$. By \tref{new-in-dU-2}, ${V \cap (\con U)^{**} \neq \varnothing}$.
    Since $X$ is \L-spatial (see \cref{L-cts-implies-spatial}),
    $V \cap (\con U)^{**} \cap \loc X \neq \varnothing$. Thus, there is ${y \in V \cap \loc X}$ with $y \in (\con U)^{**}$, yielding that $\max \upset y \subseteq \con U$. 

    \tref{new-in-dU-3}$\Rightarrow$\tref{new-in-dU-4} 
    By \tref{new-in-dU-3}, there is $y \in D$ such that $\max \upset y \subseteq \con U$. Therefore, $y \notin \downset (\max X \setminus \con U)$, and hence $D \not\subseteq \downset (\max X \setminus \con U)$.

    \tref{new-in-dU-4}$\Rightarrow$\tref{new-in-dU-2} Let $V$ be a clopen neighborhood of $x$ and set $D = \downset V$. By \cref{lem:Esakia-space}, $D$ is a clopen downset neighborhood of $x$. Hence, by \tref{new-in-dU-4}, $D \not\subseteq \downset (\max X \setminus \con U)$. Thus, there is $y \in D$ such that $y \not \in \downset(\max X \setminus \con U)$. 
    Therefore, $\upset y \subseteq \downset \con U$, and so $V \cap \downset \con U \neq \varnothing$. Thus, $V \cap (\con U)^{**} \neq \varnothing$, and hence $x \in \cl ((\con U)^{**})$
\end{proof}

We use \cref{thm:char-of-dU} to characterize the nuclear subset $X_{\d}$ corresponding to the $\d$-nucleus. 
By \cref{thm:nuclei-and-sublocales}, $x \in X_{\d}$ iff $\d^{-1}[x] \subseteq x$, that is,
\[
    \forall a \in L,\ \d a \in x \implies a \in x.
\]
Identifying $L$ with $\clopup(X)$, this is equivalent to
\[
    \forall U \in \clopup(X),\ x \in \d U \implies x \in U.
\]

\begin{theorem} \label{cor:char-of-Xd}
    Let $X$ be a stably continuous \L-space. For $x \in X$, the following are equivalent.
    \begin{enumerate}[cref=theorem]
        \item $x \in X_{\d}$.\label{new-Xd-1}
        \item $x \notin U \in \clopup(X)$ implies that there is a clopen downset neighborhood $D$ of $x$ such that $D \subseteq \downset(\max X \setminus \con U)$.\label{new-Xd-2}
        \item If $U \in \clopup(X)$ and for each clopen downset $D$ containing $x$ there is $y \in D \cap \loc X$ such that $\max \upset y \subseteq \con U$, then $x \in U$. \label{new-Xd-3}\label{cor:Xd-char-4}
    \end{enumerate}
\end{theorem}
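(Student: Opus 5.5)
The plan is to unwind the characterization of $X_{\d}$ preceding the statement, namely $x \in X_{\d}$ iff for every $U \in \clopup(X)$, $x \in \d U$ implies $x \in U$. Each of (2) and (3) is then the contrapositive or a reformulation of this implication, obtained by substituting an equivalent description of $x \in \d U$ from \cref{thm:char-of-dU}.

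For (1)$\Leftrightarrow$(2), use \cref{new-in-dU-4}. By that equivalence, $x \notin \d U$ holds iff there is a clopen downset neighborhood $D$ of $x$ with $D \subseteq \downset(\max X \setminus \con U)$. Condition (1) says that $x \notin U$ implies $x \notin \d U$ for all $U \in \clopup(X)$. Substituting the description of $x \notin \d U$ gives exactly (2).

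For (1)$\Leftrightarrow$(3), use \cref{new-in-dU-3}, restricted to localic points and to clopen downsets. The main step is the following claim.

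\emph{Claim.} $x \in \d U$ iff for each clopen downset $D \ni x$ there is $y \in D \cap \loc X$ with $\max\upset y \subseteq \con U$.

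\emph{Forward direction.} A clopen downset containing $x$ is in particular a clopen neighborhood of $x$. So we repeat the argument of (2)$\Rightarrow$(3) in \cref{thm:char-of-dU}. From $x \in \cl((\con U)^{**})$ we get $D \cap (\con U)^{**} \neq \varnothing$. This is a nonempty open set, because $(\con U)^{**} = X\setminus\downset(X\setminus\downset\con U)$. Here $\downset \con U$ is open, being a union of the open sets $\downset V$ from \cref{lem:Esakia-space}. Hence $X\setminus \downset\con U$ is closed, and its downset is closed by \cref{lem:up-down-closed}. Since $X$ is \L-spatial (\cref{L-cts-implies-spatial}), this open set meets $\loc X$, which produces the required localic $y$.

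\emph{Converse.} Assume the displayed condition. Each such $y$ lies in $D$ and satisfies $y \notin \downset(\max X\setminus\con U)$, because $\max\upset y\subseteq \con U$. Therefore $D \not\subseteq \downset(\max X\setminus \con U)$ for every clopen downset $D \ni x$, so \cref{new-in-dU-4} gives $x \in \d U$.

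With the claim in hand, (3) states exactly that $x \in \d U$ implies $x \in U$ for all clopen upsets $U$. This is (1).

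The only point needing care is the openness of $(\con U)^{**}$, which is what allows \L-spatiality to supply a localic point. I expect this to go through as described.
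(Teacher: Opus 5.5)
Your proof is correct and essentially the paper's. Both arguments reduce everything to \cref{new-in-dU-4} plus \L-spatiality applied to a nonempty open set; your $D\cap(\con U)^{**}$ is the paper's $D\setminus\downset(\max X\setminus\con U)$, since $(\con U)^{**}=X\setminus\downset(\max X\setminus\con U)$. The only difference is organizational: the paper runs the cycle (1)$\Rightarrow$(2)$\Rightarrow$(3)$\Rightarrow$(1), while you route both equivalences through (1) via a localic reformulation of $x\in\d U$.
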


\begin{proof}
    \tref{new-Xd-1}$\Rightarrow$\tref{new-Xd-2} Let $x \notin U$. Then $x \notin \d U$ by \tref{new-Xd-1}. By \cref{new-in-dU-4}, there is a clopen downset neighborhood $D$ of $x$ such that $D \subseteq \downset (\max X \setminus \con U)$.

    \tref{new-Xd-2}$\Rightarrow$\tref{new-Xd-3} Let $x\notin U$. By \tref{new-Xd-2}, there is a clopen downset neighborhood $D$ of $x$ such that $D \subseteq \downset(\max X \setminus \con U)$. Since $X$ is \L-spatial, $D \cap \loc X \ne \varnothing$. 
    Thus, there is $y \in D \cap \loc X$ such that $y \in \downset(\max X \setminus\con U)$, and hence
    $\max \upset y \not \subseteq \con U$ 
    by \cref{lem:min-max-nonempty}.

    \tref{new-Xd-3}$\Rightarrow$\tref{new-Xd-1} Let $x \in \d U$ and let $D$ be a clopen downset containing $x$. By \cref{new-in-dU-4}, ${D \not \subseteq \downset (\max X \setminus \con U)}$. 
    Since $\max X$ is closed, $\con U$ is open, and $X$ is \L-spatial,
    \[{D \cap \loc X \not\subseteq \downset(\max X \setminus \con U)}.\]
    Therefore, there is $y \in D \cap \loc X$ such that 
    $\max \upset y \subseteq \con U$. Thus, $x \in U$ by \tref{new-Xd-3}. Consequently, $x \in X_{\d}$.
\end{proof}

Observing that in an algebraic \L-space $X$, $\con U = \alg U$ for each $U \in \clopup(X)$ (see, e.g., \cite[Lem.~12.4(2)]{Mel25}), we obtain the following 
versions
of
\cref{thm:char-of-dU,cor:char-of-Xd} 
for arithmetic \L-spaces:

\begin{corollary} \label{thm:dU-char}
    Let $X$ be an arithmetic \L-space and $U \in \clopup(X)$. For $x \in X$, the following are equivalent.
    \begin{enumerate}[cref=theorem]
        \item\label{in-dU-1}
        $x \in d U$.
        \item\label{in-dU-2}
        $x \in \cl ((\alg U)^{**})$.
        \item\label{in-dU-3}
        If $V$ is a clopen neighborhood of $x$, then there is $y \in V$ such that $\max \upset y \subseteq \alg U$.
        \item\label{in-dU-4}
        If $D \subseteq X$ is a clopen downset neighborhood of $x$, then $$D \not \subseteq \downset (\max X \setminus \alg U).$$
    \end{enumerate}
\end{corollary}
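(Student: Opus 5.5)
This corollary is a direct specialization of \cref{thm:char-of-dU}, so the plan is to check that its hypotheses hold and then translate each clause. First, every arithmetic \L-space is a stably continuous \L-space. Indeed, by \cref{thm:alglp-dualities}, an arithmetic \L-space $X$ is the \L-space of an arithmetic frame $L$. Arithmetic frames are exactly the algebraic stably continuous frames, so by \cref{thm:ConLP-dualities} the \L-space $X$ is \L-stably continuous. Hence \cref{thm:char-of-dU} applies to $X$ and any $U \in \clopup(X)$.

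Second, I would replace $\d$ with $d$ and $\con$ with $\alg$. Since $L$ is algebraic, \cref{c-3} gives $\d = d$ on $L$. Identifying $L$ with $\clopup(X)$, this yields $\d U = dU$, so condition \tref{in-dU-1} coincides with \tref{new-in-dU-1}. For the kernels, I would invoke the cited fact that in an algebraic \L-space $\con U = \alg U$ for every $U \in \clopup(X)$. Substituting this into \tref{new-in-dU-2}, \tref{new-in-dU-3}, and \tref{new-in-dU-4} produces exactly \tref{in-dU-2}, \tref{in-dU-3}, and \tref{in-dU-4}. The equivalences then follow verbatim from \cref{thm:char-of-dU}.

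The only step needing care is the identity $\con U = \alg U$. If one wanted to prove it directly rather than cite it, I would use \cref{lem:Scott-inbetween} together with \cref{lem:ll-and-alg}.
\begin{itemize}
\item[$\supseteq$] A clopen Scott upset $V \subseteq U$ satisfies $V \ll V \subseteq U$, so $V \ll U$.
\item[$\subseteq$] If $V \ll U$, then $V \leq K \leq U$ for some compact $K$. Compactness of $K$ means $\sigma(K)$ is a clopen Scott upset, so $V \subseteq \alg U$.
\end{itemize}
Everything else is bookkeeping, so I expect no real obstacle beyond confirming that the identification of $L$ with $\clopup(X)$ is compatible with both the substitution $\d=d$ and the kernel identity.
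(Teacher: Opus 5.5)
Your proposal is correct and follows the paper's own route: the paper obtains this corollary by specializing \cref{thm:char-of-dU} to arithmetic \L-spaces, using $\d = d$ for algebraic frames (\cref{c-3}) and the identity $\con U = \alg U$ in algebraic \L-spaces, which it cites from the literature rather than proving. Your direct sketch of $\con U = \alg U$ via \cref{lem:ll-and-alg} and \cref{Scott-upsets-property} is also sound. Note that it actually relies on the characterization of Scott upsets as closed upsets $F$ with $F \ll F$, not on \cref{lem:Scott-inbetween}.
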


\begin{corollary}  \label{cor:Xd-char}
     Let $X$ be an arithmetic \L-space. For $x \in X$, the following are equivalent.
     \begin{enumerate}[cref=corollary]
         \item $x \in X_d$. \label{Xd-1}
         \item $x \notin U \in \clopup(X)$ implies that there is a clopen downset neighborhood $D$ of $x$ such that $D  \subseteq \downset(\max X \setminus \alg U)$.\label{Xd-2}
         \item If $U \in \clopup(X)$ and for each clopen downset $D$ containing $x$ there is $y \in D \cap \loc X$ such that $\max \upset y \subseteq \alg U$, then $x \in U$.\label{Xd-3}
     \end{enumerate}
\end{corollary}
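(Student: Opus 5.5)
This corollary is obtained by specializing \cref{cor:char-of-Xd} to the arithmetic setting, in the same way \cref{thm:dU-char} specializes \cref{thm:char-of-dU}. First we check that an arithmetic \L-space $X$ is a stably continuous \L-space. By \cref{thm:alglp-dualities}, $X$ is the \L-space of an arithmetic frame $L$. Arithmetic frames are stably continuous, so by \cref{thm:ConLP-dualities} the \L-space of $L$ lies in \StConLPries. Hence the hypotheses of \cref{cor:char-of-Xd} are met.

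Next we identify the nuclei. Since $L$ is algebraic, \cref{c-3} gives $\d = d$, so $X_{\d} = X_d$ as subsets of $X$. Here we use that $\alpha$ in \cref{thm:nuclei-and-sublocales} depends only on the nucleus. Thus condition \tref{Xd-1} coincides with condition \tref{new-Xd-1}.

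Finally, we use the observation quoted just before \cref{thm:dU-char}: in an algebraic \L-space, $\con U = \alg U$ for every $U \in \clopup(X)$ \cite[Lem.~12.4(2)]{Mel25}. Substituting $\alg U$ for $\con U$ turns conditions \tref{new-Xd-2} and \tref{new-Xd-3} of \cref{cor:char-of-Xd} verbatim into \tref{Xd-2} and \tref{Xd-3}. The equivalence of \tref{Xd-1}--\tref{Xd-3} then follows directly.

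There is no real obstacle here. The only point needing care is that ``arithmetic \L-space'' is defined intrinsically through $\alg$, while \cref{cor:char-of-Xd} needs ``stably continuous \L-space'', defined through $\con$. The simplest bridge is through duality, as done above: arithmetic frames are stably continuous, so the duals coincide. A direct check would combine the identity $\con = \alg$ with the arithmetic condition $\alg(U\cap V)=\alg U\cap\alg V$, together with the density of $\alg U$ in $U$, to obtain the defining conditions of a stably continuous \L-space.
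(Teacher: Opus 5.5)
Your proposal is correct and follows the paper's own route: the paper obtains this corollary by specializing \cref{cor:char-of-Xd}, using $\d = d$ on algebraic frames (\cref{c-3}) and the identity $\con U = \alg U$ in algebraic \L-spaces (Melzer's Lemma~12.4(2)). The paper leaves implicit that arithmetic \L-spaces are stably continuous; your explicit check of this, either via the dualities or directly from $\con = \alg$, is a harmless addition.
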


We next characterize $\loc X_{\d}$, generalizing a similar characterization of $\loc X_d$ for arithmetic \L-spaces given in \cite[Thm.~4.12]{BBM25}.
For this, we require the following lemmas, the first of which generalizes \cite[Lem.~5.7]{BM23} (see also \cite[Prop.~5.2]{PS00}).

\begin{lemma} \label{lem:con-Scott}
    Let $X$ be a continuous \L-space, $U \in \clopup(X)$, and $C$ a closed subset of $X$. Then $C \subseteq \con U$ iff there is $F \in \SUp(X)$ such that $C \subseteq F \subseteq U$.
\end{lemma}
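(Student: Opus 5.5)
The backward direction is immediate. Suppose $C \subseteq F \subseteq U$ with $F \in \SUp(X)$. Because $X$ is a continuous \L-space, $U \subseteq \cl \con U$, and $\con U$ is an open upset, being a union of clopen upsets. Hence $F \subseteq \cl \con U$. By \cref{Scott-upsets-property} we have $F \ll F$, and applying this with $W = \con U \in \opup(X)$ gives $F \subseteq \con U$. Therefore $C \subseteq \con U$. This is the same argument already used in the last paragraph of the proof of \cref{prop:con-negneg}.

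For the forward direction, assume $C \subseteq \con U = \bigcup\{V \in \clopup(X) \mid V \ll U\}$. The set $C$ is closed in the Stone space $X$, so it is compact. Hence finitely many clopen upsets $V_1,\dots,V_n$, each with $V_i \ll U$, already cover $C$. Put $V = V_1 \cup \dots \cup V_n$. This is a clopen upset; it corresponds to the join $V_1 \vee \dots \vee V_n$ in $L \cong \clopup(X)$, since finite joins in $\clopup(X)$ are unions.

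In $L$, a finite join of elements way below $U$ is again way below $U$. This follows from the definition of $\ll$: given a cover of $U$, each $V_i$ needs only a finite subset, and the union of these finitely many finite subsets is finite. Hence $V \ll U$. (If $n = 0$, take $V = \varnothing$, which is trivially way below $U$.) Now \cref{lem:Scott-inbetween} supplies $F \in \SUp(X)$ with $V \subseteq F \subseteq U$, and then $C \subseteq V \subseteq F \subseteq U$, as required.

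There is no serious obstacle here. The only points needing care are the compactness reduction to finitely many $V_i$, and the fact that $\ll$ is closed under finite joins. The latter is what lets us apply \cref{lem:Scott-inbetween}, which is stated only for a single clopen upset $V$.
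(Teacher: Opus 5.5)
Your proof is correct and takes the same approach as the paper. The forward direction uses compactness of $C$ and then \cref{lem:Scott-inbetween}; the converse uses density of $\con U$ in $U$ together with $F \ll F$ (\cref{Scott-upsets-property}), and you also spell out why $V_1 \cup \dots \cup V_n \ll U$, a step the paper leaves implicit.
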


\begin{proof}
    ($\Rightarrow$) Suppose $C \subseteq \con U$. Since $C$ is closed, it is compact, so there is $V \in \clopup(X)$ such that $C \subseteq V \ll U$. Thus,  \cref{lem:Scott-inbetween} yields
    a Scott upset $F$
    such that $C \subseteq V \subseteq F \subseteq U$.

    ($\Leftarrow$) Suppose $C \subseteq F \subseteq U$ for some $F \in \SUp(X)$. Since $X$ is a continuous \L-space, ${F \subseteq \cl \con U}$. Thus, 
    $C \subseteq F \subseteq \con U$
    by \cref{Scott-upsets-property}. 
\end{proof}

\begin{lemma} \label{lem:con-and-joins}
    Let $X$ be a continuous \L-space and $U = \cl \bigcup \mathcal U$ for some $\mathcal U \subseteq \clopup(X)$. Then $\con U = \bigcup\{\con V \mid V \in \mathcal U\}$. In particular, $\con$ commutes with finite unions of clopen upsets.
\end{lemma}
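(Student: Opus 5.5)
We prove the two inclusions separately, using the characterization of $\con$ via Scott upsets in \cref{lem:con-Scott}.

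\emph{The easy inclusion.} For each $V \in \mathcal U$ we have $V \subseteq U$, so every clopen upset $W \ll V$ also satisfies $W \ll U$, by \cref{lem:weakening-2} transferred along the identification of $L$ with $\clopup(X)$. Hence $\con V \subseteq \con U$, and therefore $\bigcup\{\con V \mid V \in \mathcal U\} \subseteq \con U$.

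\emph{The converse inclusion.} Let $x \in \con U$. Then $x \in W$ for some $W \in \clopup(X)$ with $W \ll U$. By \cref{lem:Scott-inbetween} there is $F \in \SUp(X)$ with $W \subseteq F \subseteq U$.

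\begin{itemize}
\item \emph{Reduce to a finite subfamily.} Since $U = \cl \bigcup \mathcal U$ and $\bigcup \mathcal U$ is an open upset, $F \subseteq \cl \bigcup\mathcal U$. As $F$ is a Scott upset, \cref{Scott-upsets-property} gives $F \ll F$, so $F \subseteq \bigcup \mathcal U$. By compactness of $F$ there are finitely many $V_1,\dots,V_n \in \mathcal U$ with $F \subseteq V_1 \cup \dots \cup V_n$.
\item \emph{Split $F$ along the $V_i$.} We must pass from $x \in F \subseteq V_1\cup\dots\cup V_n$ to $x \in \con V_i$ for a single $i$. This is the main obstacle, since a Scott upset need not sit inside one piece. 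The key is to work with $\upset x$ rather than all of $F$.
\item \emph{Handle $\upset x$.} We have $\upset x \subseteq F$. If we could choose $x \in \loc X$, then $\upset x$ would be a Scott upset, and $\upset x \subseteq \bigcup_i V_i$ would give $x \in V_i$ for some $i$. This alone is not enough, because we need $x \in \con V_i$, not merely $x \in V_i$.
\end{itemize}

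A better route goes back through the frame side. Via \cref{lem:ll-char}, $W \ll U = \bigvee \mathcal U$. Using \cref{lem:interpolating}, choose $W'$ with $W \ll W' \ll U$. Then the finite-subcover property of $\ll$ gives $W' \le V_1 \vee \dots \vee V_n$, so $W \ll V_1 \vee \dots \vee V_n$, where the finite join is the clopen upset $V_1 \cup \dots \cup V_n$. It therefore remains to prove the "in particular" finite case: $\con(V_1 \cup V_2) = \con V_1 \cup \con V_2$. For this we need the step from $W \ll V_1\cup V_2$ to $x \in \con V_1 \cup \con V_2$.

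\emph{The finite case.} By continuity, write $V_1 = \cl \bigcup\{A \mid A \ll V_1\}$ and similarly for $V_2$. Then $V_1 \cup V_2$ is the closure of the union of all $A$ with $A \ll V_1$ or $A \ll V_2$, so it is the join in $L$ of these elements. Since $W \ll V_1 \cup V_2$, the way-below property yields finitely many such $A$ covering $W$. Each $A$ lies in $\con V_1$ or $\con V_2$, so $x \in W \subseteq \con V_1 \cup \con V_2$.

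The same argument applies directly to an arbitrary $\mathcal U$, bypassing the finite reduction. Since $U$ is the join of the elements $A$ with $A \ll V$ for some $V \in \mathcal U$, the relation $W \ll U$ gives a finite cover of $W$ by such $A$. Hence $x \in \bigcup\{\con V \mid V\in\mathcal U\}$.
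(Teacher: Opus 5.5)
Your proposal is correct, and the argument that closes it takes a different route from the paper's.

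\textbf{The paper's route.} The paper stays on the Priestley side. From $x \in \con U$ it uses \cref{lem:con-Scott} to get a Scott upset $F$ with $x \in F \subseteq U$. The property $F \ll F$ gives $F \subseteq \bigcup\mathcal U$. It then picks $y \in \min F$ with $y \le x$. Such a $y$ is localic, so $\upset y$ is a Scott upset, and $y \in V$ for some $V \in \mathcal U$. Hence $x \in \upset y \subseteq V$, and \cref{lem:con-Scott} gives $x \in \con V$.

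This is essentially the $\upset x$ idea you abandoned. Your objection that $\upset x \subseteq V_i$ yields only $x \in V_i$ is unfounded: for a Scott upset, \cref{lem:con-Scott} (with $C=\{x\}$) upgrades this to $x \in \con V_i$. Your other worry, that $x$ need not be localic, is handled by passing to a minimal point of $F$ below $x$.

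\textbf{Your route.} Your final paragraph uses only the density of $\con V$ in $V$ and the definition of $\ll$. Since $\cl \con V = V$ for each $V \in \mathcal U$, you get $U = \cl\bigcup_{V\in\mathcal U}\con V$. So $U$ is the join of all $A \ll V$ with $V \in \mathcal U$, and $W \ll U$ puts $W$ inside finitely many such $A$. You can even drop the finite subcover. The set $O=\bigcup_{V\in\mathcal U}\con V$ is an open upset with $\cl O = U$, so the \L-space definition of $W \ll U$ gives $W \subseteq O$ directly.

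\textbf{Comparison.} Your version is more elementary: it needs no Scott upsets, no localic or minimal points, and neither \cref{lem:con-Scott} nor \cref{lem:Scott-inbetween}. It shows the lemma is really just continuity plus the definition of $\ll$. The paper's version instead exercises the Scott-upset machinery it relies on in the subsequent results.

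\textbf{Write-up.} Keep only the easy inclusion and your last paragraph. The bulleted exploration and the interpolation and finite-case detour are redundant.
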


\begin{proof}
    Since $V \subseteq U$ for each $V \in \mathcal U$ and $\con$ is monotone, the $\supseteq$ inclusion is clear.
    For the other
    inclusion, suppose $x \in \con U$. 
    By \cref{lem:con-Scott}, there is $F \in \SUp(X)$ such that $x \in F \subseteq \cl \bigcup \mathcal U$. By \cref{Scott-upsets-property}, 
    $F \subseteq \bigcup \mathcal U$, and hence $x \in \bigcup \mathcal U$. By \cref{lem:min-max-nonempty}, there is $y \in \min F \subseteq \loc X$ with $y \leq x$. Since $F \subseteq \bigcup \mathcal U$, there is $V \in \mathcal U$ with $y \in V$. Therefore, $x \in \upset y \subseteq V$. Because $\upset y \in \SUp(X)$, another application of \cref{lem:con-Scott} yields that $x \in \con V$.
    
    In particular,
    if $U_1, \dots, U_n \in \clopup(X)$, then $U_1 \cup \dots \cup U_n 
    = \cl (U_1 \cup \dots \cup U_n)$, so the above gives that   
    $\con (U_1 \cup \dots \cup U_n)
    = \con  U_1 \cup \dots \cup\con U_n$.
\end{proof}

\begin{lemma}[{\cite[Lem.~6.3(1)]{BM23}}]
    \label{lem:Scott-intersection-con}
    Let $X$ be a continuous \L-space and $F \in \SUp(X)$. Then
    $
        F = \bigcap \{\con U \mid F\subseteq U \in \clopup(X)\}.
    $
\end{lemma}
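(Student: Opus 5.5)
The inclusion $F \subseteq \bigcap \{\con U \mid F\subseteq U \in \clopup(X)\}$ is immediate from \cref{lem:con-Scott}. For each $U \in \clopup(X)$ with $F \subseteq U$, apply the ($\Leftarrow$) direction with $C = F$ and the Scott upset $F$ itself, giving $F \subseteq F \subseteq U$. This yields $F \subseteq \con U$ for every such $U$, and hence $F$ is contained in the intersection.

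For the reverse inclusion, I will argue by contraposition. Suppose $x \notin F$; I need a clopen upset $U \supseteq F$ with $x \notin \con U$. Since $F$ is a closed upset, \cref{lem:intersections-of-clopens} shows that $F$ is the intersection of the clopen upsets containing it. So there is $U \in \clopup(X)$ with $F \subseteq U$ and $x \notin U$. Because $\con U \subseteq U$, we get $x \notin \con U$.

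To justify $\con U \subseteq U$: $\con U$ is a union of clopen upsets $V$ with $V \ll U$, and each such $V$ satisfies $V \subseteq U$. Indeed, taking $W = U \in \opup(X)$ in the definition of $\ll$, we have $U \subseteq \cl U$, so $V \subseteq U$. This completes the argument.

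The only step needing any care is the separation of $x$ from $F$ by a clopen upset, and that is supplied directly by \cref{lem:intersections-of-clopens}. I do not expect a real obstacle here; the lemma is essentially a repackaging of \cref{lem:con-Scott} together with Priestley separation for closed upsets.
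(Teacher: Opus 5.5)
Your proof is correct. The inclusion $F \subseteq \con U$ is exactly the ($\Leftarrow$) half of \cref{lem:con-Scott} with $C = F$. That half uses only the density of $\con U$ in $U$ and $F \ll F$, so there is no circularity. The reverse inclusion follows from $\con U \subseteq U$ together with the fact that $F$ is the intersection of the clopen upsets containing it.

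The paper does not reprove this lemma; it only cites it from the literature. Your argument is the natural one for it.
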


We use the preceding lemmas to obtain the following characterization of $\loc X_{\d}$.

\begin{theorem} \label{thm:loc-Xd-eqv-conditions}
    Let $X$ be a stably continuous \L-space and $x \in \loc X$. The following are equivalent.
    \begin{enumerate}[cref=corollary]
        \item $x \in \loc X_{\d}$.\label{ddu-points-1}
        \item If $U \in \clopup(X)$ and $ \max \upset x \subseteq \con U$, then $x \in \con U$.\label{ddu-points-2}
        \item $x$ is the greatest localic point below some maximal point of $X$.\label{ddu-points-5}
        \item If $F \in \SUp(X)$ and $\max \upset x \subseteq F$, then $x \in F$.\label{ddu-points-4}
    \end{enumerate}
\end{theorem}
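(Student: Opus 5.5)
The plan is to prove (2)$\Leftrightarrow$(4) first, then tie (1) to these two, and finally handle (3), which is where stable continuity enters. Throughout I use that $\loc X_{\d} = X_{\d} \cap \loc X$ (\cref{rem:nuclear-and-priestley}). I also use that for $x \in \loc X$ the set $\downset x$ is a clopen downset: it is open by definition and closed by \cref{lem:up-down-closed}. Finally, $\max\upset x$ is closed (\cref{lem:closed-max}).

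For (2)$\Rightarrow$(4), let $F \in \SUp(X)$ with $\max\upset x \subseteq F$. By \cref{lem:con-Scott} applied with $C = F$, we have $F \subseteq \con U$ for every clopen upset $U \supseteq F$. Then (2) gives $x \in \con U$ for all such $U$, and \cref{lem:Scott-intersection-con} yields $x \in F$.

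For (4)$\Rightarrow$(2), suppose $\max\upset x \subseteq \con U$. By \cref{lem:con-Scott} applied with $C = \max\upset x$, there is a Scott upset $F$ with $\max\upset x \subseteq F \subseteq U$. Then (4) gives $x \in F$. Since $x$ is localic, $\upset x \in \SUp(X)$ and $\upset x \subseteq U$, so $x \in \con U$ by \cref{lem:con-Scott} again.

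For (1)$\Rightarrow$(4), take $F$ as above. For each clopen upset $U \supseteq F$ we have $\max\upset x \subseteq F \subseteq \con U$, so $x \in (\con U)^{**} \subseteq \d U$ by \cref{prop:con-negneg}. Since $x \in X_{\d}$, this forces $x \in U$. As $F$ is the intersection of the clopen upsets containing it (\cref{lem:intersections-of-clopens}), we get $x \in F$.

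For (2)$\Rightarrow$(1), let $x \in \d U$. Applying \cref{new-in-dU-4} to the clopen downset $D = \downset x$ gives some $y \le x$ with $y \notin \downset(\max X \setminus \con U)$, that is, $\max\upset y \subseteq \con U$. Since $\upset x \subseteq \upset y$, we get $\max\upset x \subseteq \max\upset y \subseteq \con U$. Now (2) gives $x \in \con U \subseteq U$, hence $x \in X_{\d}$.

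For (3), the key step, and the one I expect to be the main obstacle, is the following claim. For each $m \in \max X$, the set $K_m = \{y \in \loc X \mid y \le m\}$ has a greatest element $g_m$. To show $K_m$ is directed, take $y_1, y_2 \in K_m$. Stable continuity should give that $\upset y_1 \cap \upset y_2$ is again a Scott upset; dually, the filter generated by two Scott-open filters is Scott open, using stability of $\ll$ together with \cref{lem:round-filters}. This Scott upset contains $m$, so by \cref{lem:min-max-nonempty} there is a minimal point $z$ of it with $z \le m$, and $z$ is localic by the definition of a Scott upset. Thus $z \in K_m$ is an upper bound of $y_1$ and $y_2$. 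To obtain a greatest element, I would use that $\loc X$ is sober, so its specialization order (which is the order of $X$ restricted) has directed joins. It then remains to check that this join $g$ still lies below $m$. For this I would argue via Scott openness: if $g \not\le m$, Priestley separation gives a clopen upset containing $g$ but not $m$, and hence some open set of $\loc X$ containing $g$. Since opens are Scott open in the specialization order, that set already contains some $y \in K_m$, which would force $y \not\le m$, a contradiction. I expect the details of this step to need care.

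Given the claim, (3)$\Rightarrow$(4) is direct. Suppose $x = g_m$, and let $F \in \SUp(X)$ with $\max\upset x \subseteq F$; note that $m \in \max\upset x$. Then $m \in F$, so \cref{lem:min-max-nonempty} gives a localic $z \in \min F$ with $z \le m$. Hence $z \le g_m = x$, and so $x \in F$.

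For (2)$\Rightarrow$(3), note that $x \le g_m$ for every $m \in \max\upset x$. Suppose, for a contradiction, that $g_m \ne x$ for all such $m$; then $x \notin \upset g_m$. By \cref{lem:Scott-intersection-con} there is a clopen upset $U_m$ with $g_m \in \con U_m$ and $x \notin U_m$, and then $m \in \con U_m$. Since $\max\upset x$ is compact, finitely many of these cover it: say $\max\upset x \subseteq \con U_{m_1} \cup \dots \cup \con U_{m_n}$. By \cref{lem:con-and-joins} this union equals $\con U$ for $U = U_{m_1} \cup \dots \cup U_{m_n}$. Yet $x \notin U$, and so $x \notin \con U$, contradicting (2).
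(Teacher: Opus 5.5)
Your proof is correct, and its ingredients are essentially the paper's. Your (2)$\Rightarrow$(1) is the paper's (4)$\Rightarrow$(1): both test against the clopen downset $\downset x$. Your (3)$\Rightarrow$(4) is identical to the paper's, and the compactness argument in your (2)$\Rightarrow$(3) is the paper's.

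The paper organizes things as a single cycle (1)$\Rightarrow$(2)$\Rightarrow$(3)$\Rightarrow$(4)$\Rightarrow$(1). It gets (1)$\Rightarrow$(2) in one line from \cref{cor:char-of-Xd} with $y=x$, using that $x$ is localic to pass from $U$ to $\con U$. So your direct (2)$\Leftrightarrow$(4) and your route to (1)$\Rightarrow$(4) via \cref{prop:con-negneg} are extra work, but harmless.

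The one genuine difference is in (2)$\Rightarrow$(3). The paper needs only that for each $m\in\max\upset x$ there is a localic $z$ with $x<z\le m$. This follows from a single use of your directedness step: if $w\le m$ is localic with $w\not\le x$, take a minimal point of $\upset x\cap\upset w$ below $m$. The paper's proof leaves this use of stability implicit, and you make it explicit. Your further construction of the greatest element $g_m$ via sobriety of $\loc X$ and Scott-openness of its opens is correct but not needed, and (3)$\Rightarrow$(4) does not use it at all.

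Two small corrections:
\begin{itemize}
\item The claim that every $K_m$ with $m\in\max X$ has a greatest element fails when $K_m=\varnothing$. This does happen: in the \L-space of $\Omega(\mathbb{R})$, an ultrafilter containing every $(n,\infty)$ contains no neighborhood filter. However, you only apply the claim to $m\in\max\upset x$, where $x\in K_m$, so nothing breaks.
\item \cref{lem:Scott-intersection-con} yields $x\notin\con U_m$ rather than $x\notin U_m$. That is in fact exactly what you need, since then $x\notin\bigcup_i\con U_{m_i}=\con U$ by \cref{lem:con-and-joins}.
\end{itemize}
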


\begin{proof}
    \tref{ddu-points-1}$\Rightarrow$\tref{ddu-points-2}. Suppose $U \in \clopup(X)$ and $\max \upset x \subseteq \con U$. By \tref{ddu-points-1}, ${x \in \loc X_{\d}}$, so \cref{cor:char-of-Xd} applies, by which $x \in U$ 
    (take $y = x$ in \ref{new-Xd-3}).
    Since $U = \cl \con U$ and $x \in \loc X$, we conclude that 
    $x \in \con U$ (see \cref{rem:loc-X-and-open-upsets}).

    \tref{ddu-points-2}$\Rightarrow$\tref{ddu-points-5}. Suppose $x$ is not the greatest localic point below some maximal point of $X$. Then for each $y \in \max \upset x$, there is $z_y \in \loc X$ such that $x < z_y \leq y$.  Therefore, ${\upset z_y \in \SUp(X)}$, so \cref{lem:Scott-intersection-con} yields some $U_y \in \clopup(X)$ such that $z_y \in \con U_y$ and $x \notin \con U_y$. Thus, ${\max \upset x \subseteq \bigcup \con U_y}$ and by compactness there exist finitely many $U_{y_1},\dots,U_{y_n}$ such that $${\max \upset x \subseteq \con U_{y_1} \cup \dots \cup \con U_{y_n}}.$$ Letting $U =  U_{y_1} \cup \dots \cup U_{y_n}$ and using  \cref{lem:con-and-joins}, $\max \upset x \subseteq \con U$ and $x \notin \con U$,
    contradicting~\tref{ddu-points-2}.

    \tref{ddu-points-5}$\Rightarrow$\tref{ddu-points-4} 
    Suppose $F \in \SUp(X)$ and $\max \upset x \subseteq F$. By \tref{ddu-points-5}, there is $m \in \max X$ such that $x$ is the greatest localic point below $m$. Therefore, $m \in \max \upset x \subseteq F$. Since $F$ is a Scott upset, there is $y \in F \cap \loc X$ with $y \le m$. Because $x$ is the greatest localic point below $m$, we must have
    $y \leq x$, yielding that $x \in F$.

\tref{ddu-points-4}$\Rightarrow$\tref{ddu-points-1}
 It suffices to show that $x \in X_{\d}$ since $\loc X_{\d} = X_{\d} \cap \loc X$ (see \cref{rem:nuclear-and-priestley}). By \cref{cor:char-of-Xd}, it is enough to show that \ref{new-Xd-3} is satisfied.
 Suppose $U \in \clopup(X)$ and for each clopen downset $D$ containing $x$ there is $y \in D \cap \loc X$ such that $\max \upset y \subseteq \con U$. Since $x \in \loc X$, $\downset x$ is a clopen downset, so there is $y \in \downset x \cap \loc X$ with $\max \upset y \subseteq \con U$. 
 Therefore, $\max \upset x \subseteq \max \upset y \subseteq \con U$ and since $\max \upset x$ is closed, by \cref{lem:con-Scott} there is  $F \in \SUp(X)$ such that $\max \upset x \subseteq F \subseteq U$. Thus, $x \in F \subseteq U$ by \tref{ddu-points-4}. Consequently, \ref{new-Xd-3} is satisfied.
\end{proof}

In the above theorem, \ref{ddu-points-5} is especially noteworthy. Restricting it 
to algebraic \L-spaces yields the following characterization of \cite{BBM25}:

\begin{corollary}[{\cite[Thm.~4.12]{BBM25}}]
        Let $X$ be an arithmetic \L-space.
        Then
        \[
        \loc X_d = \{x \in \loc X \mid x \text{ is the greatest localic point below some maximal point of $X$}\}.
        \]
\end{corollary}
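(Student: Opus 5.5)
This corollary is the special case of \cref{thm:loc-Xd-eqv-conditions} for arithmetic \L-spaces, so the plan is to specialize that theorem rather than argue directly. First I check that an arithmetic \L-space $X$ is a stably continuous \L-space. By \cref{thm:alglp-dualities}, $X$ is the \L-space of an arithmetic frame $L$. Arithmetic frames are exactly the algebraic stably continuous frames. Hence, by \cref{thm:ConLP-dualities}, the \L-space of $L$ lies in \StConLPries.

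Second, I identify the two nuclei. Since $L$ is algebraic, \cref{c-3} gives $\d = d$ on $L$. Consequently the nuclear subsets coincide, $X_{\d} = X_d$, and so $\loc X_{\d} = \loc X_d$.

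Third, I apply the equivalence \tref{ddu-points-1}$\Leftrightarrow$\tref{ddu-points-5} of \cref{thm:loc-Xd-eqv-conditions} to each $x \in \loc X$. It says that $x \in \loc X_{\d}$ iff $x$ is the greatest localic point below some maximal point of $X$.

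For the containment $\loc X_d \subseteq \loc X$, recall $\loc X_d = X_d \cap \loc X$ from \cref{rem:nuclear-and-priestley}. Every element of $\loc X_d$ is therefore a localic point, so the restriction to $x \in \loc X$ in the theorem loses nothing, and the two sets are equal.

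There is no real obstacle. The only point needing care is the first step, namely that the categorical inclusion $\AriLPries \hookrightarrow \StConLPries$ holds at the level of objects. This follows by transporting along the dualities of \cref{fig:Priestley-dualities}, using that \AriFrm is a full subcategory of \StConFrm. One could alternatively use the observation $\con U = \alg U$ quoted before \cref{thm:dU-char}, but the duality argument is more direct.
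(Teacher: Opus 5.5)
Your proposal is correct and is the paper's own argument: the paper obtains this corollary by restricting the equivalence \tref{ddu-points-1}$\Leftrightarrow$\tref{ddu-points-5} of \cref{thm:loc-Xd-eqv-conditions} to arithmetic \L-spaces, where $\d = d$ by \cref{c-3}. Your explicit checks that arithmetic \L-spaces are stably continuous and that $\loc X_d \subseteq \loc X$ fill in details the paper leaves implicit.
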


\begin{remark}\label{rem: min primes}
Another consequence of \cref{thm:loc-Xd-eqv-conditions} is that for each stably continuous frame~$L$ and dense Scott-continuous nucleus $j \in \N(L)$, the minimal primes of $L$ are minimal primes of $jL$, generalizing the observation of \cite[Rem.~4.5]{MZ03}. This can be seen as follows. Let $X$ be the \L-space of $L$. Then minimal primes of $L$ are in dual correspondence with $\max (\loc X)$ (see, e.g., \cite[pp.~13--14]{PP12}). By \cref{lem:min-max-nonempty}, every $x \in \max (\loc X)$ is beneath some maximal point $z$. We show that $x$ is the greatest localic point beneath $z$. Suppose $y$ is a localic point beneath $z$. Since $X$ is \L-stably continuous, $\upset x \cap \upset y$ is a Scott upset and it is nonempty since $z \in \upset x \cap \upset y$. Therefore, it contains a localic point, which must be above $x$. Because $x$ is a maximal localic point, we must have $\upset x \cap \upset y = \upset x$, so $y \le x$, and hence $x$ is the largest localic point beneath $z$. 
Hence,
$x \in \loc X_{\d}$ by \cref{thm:loc-Xd-eqv-conditions}. Since $j \leq \d$ by \cref{largest-finitary}, it follows from \cref{thm:nuclei-and-sublocales} that $X_{\d} \subseteq X_j$. Since $\loc X_j = X_j \cap \loc X$ and $\loc X_{\d} = X_{\d} \cap \loc X$ (see \cref{rem:nuclear-and-priestley}), we have $x \in \loc X_{\d} \subseteq \loc X_j$. Because $x \in \max (\loc X)$, it follows that $x \in \max (\loc X_j)$, and hence its corresponding minimal prime is a minimal prime of $jL$.
\end{remark}

We conclude this section by observing  
that the $d$-nucleus provides means to realize the Gleason cover of each compact Hausdorff space, thus giving
another perspective on this important construction. 
We recall that a subset $U$ of a topological space $X$ is \emph{regular open} if $U = \intr (\cl U)$, where $\intr$ is the topological interior. Equivalently, $U$ is regular open provided $U$ is a fixpoint of the double-negation nucleus on the frame of opens $\Omega(X)$. Thus, the regular opens of $X$ are precisely the booleanization $\B(\Omega(X))$ of its frame of opens. 

The \emph{Gleason cover} of a compact Hausdorff space $X$ is the pair $(\mathcal G X,g)$ where $\mathcal G X$ is the \L-space of $\B(\Omega(X))$ and $g : \mathcal G X \to X$ is defined as follows. Let $P \in \mathcal G X$. Since $X$ is compact Hausdorff, $\bigcap\{ \cl(U) \mid U \in P\}$ is a singleton $\{x\}$ and we define $g(P)=x$  
(see, e.g., \cite[p.~107]{Joh82}). 

\begin{theorem} \label{thm:Gleason}
    Let $X$ be compact Hausdorff, and let $L$ be the frame of ideals of $\Omega(X)$. Then the compact elements $\K(dL)$ of $dL$ form a boolean frame and $\mathcal G X$ is homeomorphic to the \L-space of $\K(dL)$.
\end{theorem}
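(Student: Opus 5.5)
The plan is to compute $\K(dL)$ explicitly and show that, as a poset, it is isomorphic to the complete boolean algebra $\B(\Omega(X))$ of regular opens. Once that is done, the \L-space of $\K(dL)$ is the \L-space of $\B(\Omega(X))$, which is $\mathcal G X$ by definition.

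First, $L=\mathrm{Idl}(\Omega(X))$ is a coherent frame, hence arithmetic. Its compact elements are exactly the principal ideals ${\downarrow}U$ with $U\in\Omega(X)$, and ${\downarrow}U\wedge{\downarrow}V={\downarrow}(U\cap V)$. By \cref{prop:arithmetic-makes-d-nucleus}, $d$ is a nucleus on $L$. By \cref{rem:KdL=dKL}, $\K(dL)=d[\K(L)]$ and $d({\downarrow}U)=({\downarrow}U)^{**}$. So it suffices to compute the pseudocomplement of a principal ideal.

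I claim that $({\downarrow}U)^*={\downarrow}(U^*)$, where $U^*=X\setminus\cl U$ is the pseudocomplement in $\Omega(X)$.
\begin{itemize}
\item An ideal $I$ satisfies $I\wedge{\downarrow}U=\{0\}$ iff every $V\in I$ satisfies $V\cap U=\varnothing$, that is, iff $V\subseteq U^*$.
\item Hence $({\downarrow}U)^*$ is the set of opens disjoint from $U$.
\item Since a union of opens disjoint from $U$ is still disjoint from $U$, this set is the principal ideal ${\downarrow}(U^*)$.
\end{itemize}
Applying the claim twice gives $({\downarrow}U)^{**}={\downarrow}(U^{**})$. Therefore
\[
\K(dL)=\{{\downarrow}W \mid W \in \B(\Omega(X))\}.
\]
The order on $\K(dL)$ is inclusion, inherited from $L$, so $W\mapsto{\downarrow}W$ is an order isomorphism from $\B(\Omega(X))$ onto $\K(dL)$.

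It follows that $\K(dL)$, as a poset, is a complete boolean algebra, i.e.\ a boolean frame. Concretely:
\begin{itemize}
\item finite meets are ${\downarrow}(W_1\cap W_2)$;
\item the join in $dL$ of two of its elements is $d({\downarrow}(W_1\cup W_2))={\downarrow}(W_1\cup W_2)^{**}$;
\item arbitrary joins exist via the isomorphism.
\end{itemize}
Isomorphic frames have order-homeomorphic \L-spaces, so the \L-space of $\K(dL)$ is order-homeomorphic to that of $\B(\Omega(X))$. Since $\B(\Omega(X))$ is boolean, its \L-space is its Stone space with the trivial order, which is $\mathcal G X$ by definition.

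The main point requiring care is the pseudocomplement computation in the ideal frame, in particular verifying that $({\downarrow}U)^*$ is principal. Everything else is formal once \cref{rem:KdL=dKL} is invoked.
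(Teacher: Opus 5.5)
Your proof is correct and takes the same route as the paper: identify $\K(L)$ with $\Omega(X)$ via principal ideals, use $dk = k^{**}$ and $\K(dL) = d[\K(L)]$ from \cref{rem:KdL=dKL} to get $\K(dL) \cong \B(\Omega(X))$, and note that the \L-space of the latter is $\mathcal G X$ by definition. The one addition is that you explicitly verify $({\downarrow}U)^{**} = {\downarrow}(U^{**})$ in the ideal frame, a step the paper leaves implicit.
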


\begin{proof}
    Since $\K(L)$ is the principal ideals of $L$, we have $\K(L) \cong \Omega(X)$.
    Because $dk=k^{**}$ for each $k \in \K(L)$, the frame of regular opens $\B(\Omega(X))$ of $X$
    is precisely $d\K(L)$.
    Therefore, $\B(\Omega(X)) \cong d\K(L) = \K(dL)$, where the equality follows from \cref{rem:KdL=dKL}.
    Thus, the Gleason cover of $X$ is the \L-space of $\K(dL)$.  
\end{proof}

\begin{discussion}\leavevmode \label{disc:gleason}
    Let $X$ be a compact Hausdorff space. As in \cref{thm:Gleason}, we let $L$ be the frame of ideals of $\Omega(X)$ and $Z$ the \L-space of $L$. 
    \begin{enumerate}[cref=discussion]
        \item \label{rem: loc Zd} As was observed in \cref{thm:Gleason}, $\K(dL)$ is a boolean frame. Since $dL$ is a coherent frame (see \cref{closure-under-Scott-cts}), $\loc Z_d$ is homeomorphic to the \L-space of $\K(dL)$ (see, e.g., \cite[Rem.~3]{Ban80} or \cite[p.~65]{Joh82}). Therefore, by \cref{thm:Gleason}, $\mathcal G X$ is homeomorphic to $\loc Z_d$.  
        \item\label{disc:gleason-3}
        The Gleason cover of 
        $X$ admits an alternative description (see \cite{BGJ16}). Let $Y$ be the \L-space of $\Omega(X)$. 
        For each $x \in Y$ there is a unique $y \in \min Y$ such that $y \leq x$ (see \cite[Lem.~5.12]{BGJ16}). Moreover, $X \cong \loc Y = \min Y$, and the Gleason cover of $X$ can be realized as the pair $(\max Y,\pi)$, where $\pi : \max Y \to \loc Y$ is defined by
        \[
            \pi(x)=\text{the unique minimal point beneath }x
        \]
        (see \cite[Thm.~5.13]{BGJ16}).
        By \tref{rem: loc Zd}, $Y \cong \loc Z$.
        Thus, 
        $
        \max Y \cong \max(\loc Z), 
        $
        and hence $\mathcal G X$ is homeomorphic to $\max(\loc Z)$.
    \end{enumerate}
We continue this discussion in the next section (see \cref{disc: Gleason continued}), where we provide yet another description of the Gleason cover of $X$ using the \emph{maximal $d$-spectrum} of $L$.
\end{discussion}

\section{The maximal \texorpdfstring{$\d$}{d}-spectrum and locally compact Hausdorff spaces} \label{sec:d-spectra}

In the theory of archimedean Riesz spaces, \cite{HdP83} 
introduced the spectrum of maximal $d$-ideals and showed that, in the presence of a weak order unit, it is a compact Hausdorff space. This was later generalized in \cite{BM18} to archimedean $\ell$-groups. In the setting of arithmetic frames, \cite{Bha19} reformulated this spectrum via the $d$-nucleus of \cite{MZ03}. 
Related spectra 
had been considered previously. For instance, \cite{HM07} studied the space of points of $dL$ (i.e., $\loc X_d$), while \cite{Dub13} investigated the spectrum of minimal $d$-elements. 
A systematic study
of the maximal $d$-spectrum of an arithmetic frame was initiated 
in \cite{Bha19}.
We recall that an element of a frame is \emph{proper} if it is not the top element, and that it is \emph{maximal} if there is no proper element above it.

\begin{definition} \label{def:max-spec} 
    Let $L$ be a frame.
    \begin{enumerate}
        \item We write $\max(L)$ for the set of maximal elements of $L$.
        \item We equip $\max(L)$ with
        the topology $\{\max(L) \setminus \upset a \mid a \in L\}$. 
        \item If $j : L \to L$ is a nucleus, then the space $\max(jL)$ is called the \emph{maximal $j$-spectrum} of~$L$.
    \end{enumerate} 
\end{definition}

The following is straightforward. 

\begin{lemma}\label{thm:max-dL-basic}
    Let $L$ be a frame.
\begin{enumerate}[cref=theorem]
    \item $\max(L)$ is a $T_1$-space.\label{prop:max-ddL-T1}
    \item If $L$ is continuous, then 
    each open of $\max(L)$ is of the form $\bigcup\{\max(L) \setminus \upset b \mid b\ll a\}$ for some $ a \in L$. 
    \label{prop:gen-basis}
\end{enumerate}
\end{lemma}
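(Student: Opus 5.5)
We first check that the family $\{\max(L)\setminus\upset a \mid a\in L\}$ is a topology, and then verify the two claims.

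\emph{Topology.} Taking $a=1$ gives $\varnothing$, since $1$ is not maximal (it is not proper) and so every $m\in\max(L)$ lies in $\upset 1$ only if $m=1$, which is impossible. Taking $a=0$ gives all of $\max(L)$. For finite intersections, a maximal $m$ satisfies $m\geq a\vee b$ iff $m\geq a$ and $m\geq b$; hence $(\max(L)\setminus\upset a)\cap(\max(L)\setminus\upset b)=\max(L)\setminus\upset(a\wedge b)$ provided $m\geq a\wedge b$ implies $m\geq a$ or $m\geq b$. This is primeness of maximal elements: if $m\ngeq a$, then $m\vee a=1$ by maximality, so $b=b\wedge(m\vee a)=(b\wedge m)\vee(b\wedge a)\leq m$ by distributivity. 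For arbitrary unions, $\bigcup_i(\max(L)\setminus\upset a_i)=\max(L)\setminus\upset\bigvee_i a_i$, because $m\geq\bigvee_i a_i$ iff $m\geq a_i$ for all $i$.

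\emph{Part (1).} For $T_1$ it suffices that each singleton $\{m\}$ is closed. We claim $\max(L)\cap\upset m=\{m\}$ is the closed set $\max(L)\cap\upset m$, i.e. the complement of the open set $\max(L)\setminus\upset m$. Indeed, if $m'\in\max(L)$ and $m'\geq m$, then $m'=m$ by maximality of $m$ and properness of $m'$.

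\emph{Part (2).} Let $O=\max(L)\setminus\upset a$. Since $\upset b\supseteq\upset a$ whenever $b\leq a$, and $b\ll a$ implies $b\leq a$ by \cref{lem:weakening-1}, each set $\max(L)\setminus\upset b$ with $b\ll a$ is contained in $O$. Conversely, let $m\in O$, so $a\nleq m$. By continuity $a=\bigvee\{b\mid b\ll a\}$, so some $b\ll a$ satisfies $b\nleq m$, and hence $m\in\max(L)\setminus\upset b$. (Equivalently, one can use \cref{upset-intersection-ll}.) This gives the stated union representation.

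No step is a real obstacle. The only subtle point is primeness of maximal elements in the finite-intersection clause, which uses the frame distributive law.
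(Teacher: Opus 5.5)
Your proof is correct and follows the paper's argument. For (1), the open set $\max(L)\setminus\upset m$ separates points, or equivalently shows $\{m\}$ is closed. For (2), you prove $\max(L)\setminus\upset a=\bigcup\{\max(L)\setminus\upset b\mid b\ll a\}$ from $a=\bigvee\{b\mid b\ll a\}$, which is the content of \cref{upset-intersection-ll} used in the paper.

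The only slip is in your optional topology check, where the two cases are swapped: $a=1$ gives $\max(L)$ (your own reasoning shows this), and $a=0$ gives $\varnothing$. This is harmless, since both sets belong to the family either way.
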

\begin{proof}
    \tref{prop:max-ddL-T1} 
    If $a,b \in \max(L)$ are distinct, then 
    $a \nleq b$, so $b \in U := \max(L) \setminus \upset a$, but $a \notin U$.
    
     \tref{prop:gen-basis}
    By \cref{upset-intersection-ll}, $\upset a = \bigcap \{\upset b \mid b\ll a\}$. 
    Thus,
    \[
        \max(L) \setminus \upset a  = \max(L) \setminus \bigcap \{\upset b \mid b \ll a\} = \bigcup\{ \max(L) \setminus \upset b \mid b \ll a\}. \qedhere
    \]
\end{proof}

\begin{remark}
    If $X$ is a $T_1$-space and $L$ is the frame of ideals of $\Omega(X)$, then $\max(L)$ is the \emph{Wallman compactification} of $X$ \cite{Wal38}. Therefore, 
    $\max(L)$ is compact and $T_1$, and each compact $T_1$-space is realized this way. 
\end{remark}

If $j$ is a nucleus on $L$, then the above yields that $\max(jL)$ is a $T_1$-space, and if $L$ is continuous, then the opens of $\max(jL)$ are of the form $\bigcup\{\max(jL) \setminus \upset b \mid b\ll a\}$ for some $ a \in L$. This applies to the nucleus $\d$ on a stably continuous frame, as well as to the nucleus $d$ on an arithmetic frame. 
For an arithmetic frame $L$, the space $\max(dL)$ has been studied in \cite{Bha19,Bha24,BBM25}. A basic problem is to determine which spaces arise as maximal $d$-spectra of arithmetic frames. 
In this section, we 
show that every locally compact Hausdorff space can be realized as a maximal $\d$-spectrum of a stably continuous frame. 
As a consequence, we obtain that every locally Stone space
arises as the maximal $d$-spectrum of an arithmetic frame. 

We start by 
interpreting the maximal $\d$-spectrum using the language of Priestley duality. 
For this we require the following lemma.

\begin{lemma} \label{lem:min-loc-X-iso}
    If $X$ is a spatial \L-space, then
    every minimal localic point is a minimal point of~$X$.
\end{lemma}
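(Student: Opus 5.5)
We argue by contradiction, using only the density of $\loc X$ and the fact that localic points have open down-sets. Let $x$ be a minimal localic point of the spatial \L-space $X$, that is, $x \in \loc X$ and no localic point lies strictly below $x$. Suppose that $x$ is not minimal in $X$.

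The first step is to note that, since $x \in \loc X$, the set $\downset x$ is open by definition of a localic point. We then want an open subset of $\downset x$ that avoids $x$. The candidate is $\downset x \setminus \{x\}$. Because $X$ is a Stone space, and hence Hausdorff, the singleton $\{x\}$ is closed. Therefore $\downset x \setminus \{x\} = \downset x \cap (X \setminus \{x\})$ is open.

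The second step is to check that this open set is nonempty. This follows directly from the assumption that $x$ is not minimal, which gives some $y < x$, and such a $y$ lies in $\downset x \setminus \{x\}$.

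The third step applies spatiality. Since $X$ is \L-spatial, $\loc X$ is dense in $X$, so the nonempty open set $\downset x \setminus \{x\}$ contains a localic point $z$. Then $z < x$ with $z \in \loc X$, which contradicts the minimality of $x$ among localic points. Hence $x \in \min X$.

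None of these steps is a real obstacle. The only point needing care is that openness of $\downset x \setminus \{x\}$ uses both facts together: $\downset x$ is open because $x$ is localic, and $\{x\}$ is closed because $X$ is Hausdorff. Once that is in place, density of $\loc X$ finishes the argument immediately.
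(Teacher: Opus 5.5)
Your proof is correct and is essentially the paper's argument: intersect the open set $\downset x$ with an open set that contains some point strictly below $x$ but omits $x$, then use density of $\loc X$ to produce a localic point strictly below $x$. The only difference is how that omitting set is obtained. The paper uses Priestley separation to get a clopen downset $D$ with $y \in D$ and $x \notin D$, while you simply remove the closed singleton $\{x\}$, which is a slightly more economical way to get the same conclusion.
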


\begin{proof}
    Let $x \in \min (\loc X)$ and $y \leq x$. If $x \nleq y$, then there is a clopen downset $D$ such that $y \in D$ and $x \notin D$. Since $X$ is a spatial \L-space and $D \cap \downset x \neq \varnothing$, there is $z \in D \cap \downset x \cap \loc X$. But then $z = x$ since $x \in \min (\loc X)$, contradicting that $x\notin D$. Therefore, $y = x$, and hence $x \in \min X$.
\end{proof}

\begin{proposition} \label{prop:max-L-and-min-loc-X}
    If $X$ is a spatial \L-space, then the map $x \mapsto X \setminus \{x\}$ is a bijection from $\min (\loc X)$ to $\max (\clopup(X))$. In particular, if $X$ is the \L-space of a spatial frame $L$, then $\max L$ is in bijective correspondence with $\min (\loc X)$.
\end{proposition}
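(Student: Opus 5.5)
The plan is to establish three things about the map $x \mapsto X \setminus \{x\}$: it is well defined, it is injective, and it is surjective. Lemma~\ref{lem:min-loc-X-iso} does most of the work.

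\textbf{Well defined.} Let $x \in \min(\loc X)$. By Lemma~\ref{lem:min-loc-X-iso}, $x \in \min X$, so $\downset x = \{x\}$. Since $x$ is localic, $\{x\}$ is open. It is also closed, because $X$ is Hausdorff. Hence $X \setminus \{x\}$ is clopen. It is an upset because $x$ is minimal: if $y \ne x$ and $y \le z$, then $z \ne x$. It is proper since it omits $x$.

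\textbf{Maximal.} Any clopen upset $U \supsetneq X\setminus\{x\}$ must contain $x$, so $U = X$.

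\textbf{Injective.} This is immediate, since $X\setminus\{x\}$ determines $x$.

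\textbf{Surjective.} Let $U \in \max(\clopup(X))$; I want to show that $X \setminus U$ is a singleton $\{x\}$ with $x \in \min(\loc X)$. I would proceed in four steps.
\begin{enumerate}
\item The set $X\setminus U$ is a nonempty clopen downset. By spatiality, it contains a localic point $y$.
\item The set $U' := U \cup \upset y$ is an upset. Since $y$ is localic, $\upset y$ is a Scott upset, hence closed, though not obviously open. So I would instead use $U \cup (X \setminus \downset y)$ complemented appropriately. More precisely, because $\downset y$ is clopen, $X \setminus \downset y$ is a clopen upset, and so $U \cup (X\setminus \downset y)$ is a clopen upset.
\item By maximality, either this union equals $U$ or it equals $X$.
\begin{itemize}
\item If it equals $X$, then $X\setminus U \subseteq \downset y$.
\item If it equals $U$, then $X \setminus \downset y \subseteq U$, so $X\setminus U \subseteq \downset y$ again.
\end{itemize}
In either case $X\setminus U \subseteq \downset y$. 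Since $y \notin U$, we also have $\downset y \subseteq X\setminus U$, because $X \setminus U$ is a downset. Hence $X\setminus U = \downset y$.
\item Now apply the same argument to any localic $z \in \downset y$: it gives $\downset z = X\setminus U = \downset y$, so $z = y$. Therefore $y \in \min(\loc X)$. By Lemma~\ref{lem:min-loc-X-iso}, $y \in \min X$, so $\downset y = \{y\}$. This yields $U = X\setminus\{y\}$.
\end{enumerate}

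The main subtlety is the surjectivity step: producing a localic point in $X\setminus U$ and controlling $X \setminus U$ via the clopen downset $\downset y$. Maximality must be applied to a clopen upset, which is why I would use $X\setminus\downset y$ rather than $\upset y$.

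\textbf{The "in particular".} Identify $L$ with $\clopup(X)$ via Priestley duality. Spatiality of $L$ means $X$ is \L-spatial by Theorem~\ref{thm:spatiality}, and maximal elements of $L$ correspond to maximal proper clopen upsets, so the bijection above applies.
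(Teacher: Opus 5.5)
Your proof is correct and follows the paper's argument: pick a localic $y \notin U$, use maximality to get $U = X\setminus\downset y$, rerun this for any localic $z \le y$ to show $y \in \min(\loc X)$, and finish with \cref{lem:min-loc-X-iso}. One slip needs fixing. In the case $U \cup (X\setminus\downset y) = X$ you cannot conclude $X\setminus U \subseteq \downset y$; the case simply cannot occur, because $y$ lies in neither $U$ nor $X\setminus\downset y$. Equivalently, as in the paper, $U \subseteq X\setminus\downset y$ holds directly because $U$ is an upset missing $y$.
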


\begin{proof}
    The map $x \mapsto X \setminus \{x\}$ is well defined by \cref{lem:min-loc-X-iso}, and it is clearly injective. To see that it is surjective, let $U \in \max (\clopup(X))$. Then $U \neq X$, and since $X$ is \L-spatial, there is $x \in \loc X$ such that $x\notin U$. Therefore, $\downset x \cap U = \varnothing$, 
    and so $U \subseteq X \setminus \downset x$. Thus, $U = X \setminus \downset x$ by the maximality of $U$.
    Moreover, if $y \in \loc X$ with $y \le x$, then $U \subseteq X \setminus \downset y$, so $U = X \setminus \downset y$ by maximality, and hence $y = x$. Consequently, $x \in \min (\loc X)$, so $x \in \min X$ by \cref{lem:min-loc-X-iso}, and thus $U = X \setminus \{x\}$.
\end{proof}

Viewing $\min(\loc X)$ as a subspace of $\loc X$, we have:

\begin{theorem} \label{thm:min-loc-X-homeomorphic-max-L}
    Let $L$ be a spatial frame and $X$ its \L-space. Then $\min(\loc X)$ is homeomorphic to $\max(L)$.
\end{theorem}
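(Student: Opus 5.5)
We use the bijection of \cref{prop:max-L-and-min-loc-X}. Identify $L$ with $\clopup(X)$ and define $f : \min(\loc X) \to \max(L)$ by $f(x) = X \setminus \{x\}$. By that proposition $f$ is a bijection, so it remains to show that $f$ and $f^{-1}$ are continuous. We do this by matching the basic opens on the two sides.

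The opens of $\max(L)$ are the sets $\max(L) \setminus \upset U$ for $U \in \clopup(X)$. The opens of $\min(\loc X)$, as a subspace of $\loc X$, are the sets $U \cap \min(\loc X)$ for $U \in \clopup(X)$. It therefore suffices to prove, for each $U \in \clopup(X)$ and $x \in \min(\loc X)$,
\[
    f(x) \in \max(L) \setminus \upset U \iff x \in U .
\]
Granting this equivalence, $f$ carries $U \cap \min(\loc X)$ exactly onto $\max(L) \setminus \upset U$. Since every open on either side has this form, $f$ is open and continuous, and hence a homeomorphism.

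We prove the equivalence in both directions.
\begin{itemize}
    \item Suppose $x \in U$. Then $U \not\subseteq X \setminus \{x\}$, so $f(x) \notin \upset U$ and $f(x) \in \max(L) \setminus \upset U$.
    \item Suppose $x \notin U$. Since $U$ is an upset, $U$ avoids $x$, so $U \subseteq X \setminus \{x\}$. Hence $f(x) \in \upset U$ and $f(x) \notin \max(L) \setminus \upset U$.
\end{itemize}
The only subtle point is that $X \setminus \{x\}$ really is a clopen upset, which is needed for $f$ to land in $\clopup(X)$. This uses that $x$ is both minimal and isolated: by \cref{lem:min-loc-X-iso} and \cref{rem:iso-and-loc} a minimal localic point is isolated. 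This is already contained in \cref{prop:max-L-and-min-loc-X}, so no real obstacle remains.
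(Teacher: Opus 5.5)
Your proof is correct and takes essentially the same approach as the paper. Both identify $L$ with $\clopup(X)$, use the bijection $x \mapsto X\setminus\{x\}$ from \cref{prop:max-L-and-min-loc-X}, and match the basic opens $U \cap \min(\loc X)$ with $\max(\clopup(X))\setminus\upset U$; the paper computes the preimage and the image separately, while you package both into the single equivalence $f(x) \notin \upset U \iff x \in U$ (and ``since $U$ is an upset'' in your second bullet is superfluous, as $x \notin U$ alone gives $U \subseteq X\setminus\{x\}$).
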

\begin{proof}
    By identifying $L$ with $\clopup(X)$ and applying \cref{prop:max-L-and-min-loc-X}, it suffices to show that the map $\psi : \min (\loc X) \to \max (\clopup(X))$, defined by $\psi(x) = X \setminus \{x\}$, is continuous and open. 
    To see that $\psi$ is continuous, let $\mathcal U$ be open in $\max(\clopup(X))$. Then there is $U \in \clopup(X)$ such that 
\[
    \mathcal U = \max(\clopup(X)) \setminus \upset U 
    = \{V \in \max(\clopup(X)) \mid U \not\subseteq V\}.
\] 
Therefore,  
\begin{align*}
    \psi^{-1}(\mathcal U) 
    &= \{ x \in \min (\loc X) \mid U \not\subseteq X \setminus \{x\}  \} \\
    &= \{ x \in \min (\loc X) \mid x \in U\} \\
    &= U \cap \min (\loc X),
\end{align*}
which is open in $\min (\loc X)$.

To see that $\psi$ is open, let $U \in \clopup(X)$, so that $U \cap \min (\loc X)$ is open in $\min(\loc X)$. Then
\begin{align*}
    \psi[U \cap \min (\loc X)] 
    &= \{X \setminus \{x\} \mid x \in U \cap \min (\loc X)\} \\
    &= \{V \in \max(\clopup(X)) \mid U \not\subseteq V\} \\
    &= \max(\clopup(X)) \setminus \upset U,
\end{align*}
which is open in $\max(\clopup(X))$.
\end{proof}

\begin{corollary} \label{cor:min-loc-Xd=max-dL}
    Let $L$ be a stably continuous frame and $X$ its \L-space. Then $\min(\loc X_{\d})$ is homeomorphic to $\max(\d L)$.
\end{corollary}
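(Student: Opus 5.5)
The plan is to apply \cref{thm:min-loc-X-homeomorphic-max-L} to the frame $\d L$ rather than to $L$. For this we need two ingredients: that $\d L$ is a spatial frame, and that the \L-space of $\d L$ can be identified with $X_{\d}$ in a way compatible with localic parts and with their topologies. Once both are in place, \cref{thm:min-loc-X-homeomorphic-max-L} gives $\min(\loc Y) \cong \max(\d L)$, where $Y$ is the \L-space of $\d L$. Transporting along the identification $Y \cong X_{\d}$ then yields $\min(\loc X_{\d}) \cong \max(\d L)$.

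\textbf{Step 1: spatiality.} By \cref{largest-finitary}, since $L$ is stably continuous, $\d$ is a Scott-continuous nucleus on $L$. By \cref{closure-under-Scott-cts}, $\d L$ is then again a (stably) continuous frame. Continuous frames are spatial (see the consequence of Hofmann--Lawson duality, or \cref{L-cts-implies-spatial}), so $\d L$ is spatial.

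\textbf{Step 2: identifying the \L-space.} By \cref{rem:nuclear-and-priestley}, $X_{\d}$ with the subspace topology and the induced order is order-homeomorphic to the \L-space of $\d L$. Under this order-homeomorphism, localic points correspond to localic points, and $\loc X_{\d} = X_{\d} \cap \loc X$. An order-homeomorphism preserves minimality within the localic part, so $\min(\loc Y)$ corresponds to $\min(\loc X_{\d})$.

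\textbf{Step 3: the topologies.} We also need the topology on $\loc Y$, which is generated by traces of clopen upsets of $Y$, to correspond to the topology on $\loc X_{\d}$ regarded as the localic part of the \L-space $X_{\d}$. This is automatic, because the identification is an order-homeomorphism and therefore carries clopen upsets to clopen upsets.

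I expect Step 3 to be the main, if mild, obstacle: fixing which topology on $\min(\loc X_{\d})$ is intended. If it is the one induced from $\loc X$ (traces of $\clopup(X)$), a small extra check is needed. Every clopen upset of $X_{\d}$ has the form $U \cap X_{\d}$ for some $U \in \clopup(X)$, because the inclusion $X_{\d}\hookrightarrow X$ is dual to the onto frame homomorphism $\d : L \to \d L$. Hence the two topologies on $\loc X_{\d}$ coincide.

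With these identifications in place, the corollary follows directly from \cref{thm:min-loc-X-homeomorphic-max-L}.
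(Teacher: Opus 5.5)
Your proposal is correct and follows the paper's proof: the paper likewise observes that $\d L$ is stably continuous by \cref{closure-under-Scott-cts}, hence spatial, and then applies \cref{thm:min-loc-X-homeomorphic-max-L} to $\d L$. Your Steps~2 and~3 make explicit what the paper leaves implicit via \cref{rem:nuclear-and-priestley}, namely that $X_{\d}$ is the \L-space of $\d L$ and that the two candidate topologies on $\loc X_{\d}$ coincide.
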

\begin{proof}
    Observe that $\d L$ is stably continuous by \cref{closure-under-Scott-cts}. Thus, $\d L$ is spatial and \cref{thm:min-loc-X-homeomorphic-max-L} applies.
\end{proof}
In particular, if $L$ is an arithmetic frame, then we obtain the characterization of \cite[Thm.~6.6]{BBM25} that $\min(\loc X_d)$ is homeomorphic to $\max(d L)$. 

\needspace{2em}
\begin{discussion}\label{disc: Gleason continued}
Let $X$ be a compact Hausdorff space.
\begin{enumerate}
\item 
Let $L$ be the frame of ideals of $\Omega(X)$ and let $Z$ be the \L-space of $L$. As we pointed out in \cref{rem: loc Zd}, $\loc Z_d$ is the Gleason cover of $X$. 
Since $\loc Z_d$ is the \L-space of $\K(dL)$, which
is a boolean frame by \cref{thm:Gleason},
we have that $\min (\loc Z_d) = \loc Z_d$. Therefore, $\max(dL)$ is the Gleason cover of $X$ by \cref{cor:min-loc-Xd=max-dL,c-3}. 
But $dL$ consists exactly of the $d$-ideals of $\Omega(X)$,
so the space of maximal $d$-ideals of $\Omega(X)$ is homeomorphic to the Gleason cover of~$X$. 

\item Putting the above together with \cref{disc:gleason},
we obtain the following equivalent descriptions of the Gleason cover of 
$X$: 
\begin{enumerate}[1.]
    \item the Stone space of the regular opens of $X$ \cite{Gle58};
    \item the space of maximal points of the \L-space of $\Omega(X)$ \cite[Thm.~5.13]{BGJ16};
    \item the \L-space of the compact $d$-ideals of $\Omega(X)$ (\cref{thm:Gleason});
    \item the space of maximal $d$-ideals of $\Omega(X)$.
\end{enumerate}
\end{enumerate}
\end{discussion}

Before we investigate the topological properties of $\min (\loc X_{\d})$, we point out that spatiality can not be dropped from \cref{thm:min-loc-X-homeomorphic-max-L}.

\begin{example}
Let $B$ be a complete atomless boolean algebra, and let $L$ be the frame obtained by adjoining a new bottom to $B$; see \cref{fig:spatiality-b}. Then $\max (L) = \varnothing$. On the other hand, the \L-space $X$ of $L$ is obtained by adjoining a new top to the \L-space $X_B$ of $B$. The latter is a dense-in-itself antichain since $B$ is an atomless boolean algebra; see \cref{fig:spatiality-a}. Therefore, no $x \in X_B$ is a localic point, and hence $\loc X = \{x\} = \min (\loc X)$.
This shows that spatiality is required in \cref{thm:min-loc-X-homeomorphic-max-L} (as well as in \cref{lem:min-loc-X-iso,prop:max-L-and-min-loc-X}).
        \begin{figure}[H]
    \begin{subfigure}{.48\textwidth}
    \centering
    \begin{tikzpicture}[
    bullet/.style={fill,circle,inner sep=2pt}]
        \draw[thick] (0,0) ellipse (1 and 1.25) node at (0,0) {$B$};
        \begin{scope}[nodes=bullet]
        \node (0) at (0,-1.25) {};
        \node (1) at (0,1.25) {};
        \node (2) at (0,-1.75) {};
        \draw (0) to (2);
        \end{scope}
    \end{tikzpicture}
    \subcaption{The frame $L$ obtained from $B$ by adjoining a new bottom}
\label{fig:spatiality-b}
    \end{subfigure}\hfill
    \begin{subfigure}{.48\textwidth}
    \centering
    \begin{tikzpicture}[
    bullet/.style={fill,circle,inner sep=2pt}]
    \begin{scope}[nodes=bullet]
        \node[label=above:$x$] (y) at (3.5,2) {};
    \end{scope}
        \draw[very thick, |-|] (1, 0) to node[label=below:$X_B$] {} (6,0);
        \draw (y) to (1,0);
        \draw (y) to (6,0);
    \end{tikzpicture}
    \subcaption{The \L-space $X$ obtained from the Stone space $X_B$ by adjoining a new top $x$}
    \label{fig:spatiality-a}
    \end{subfigure}
    \caption{A non-spatial frame $L$ such that $\max (L)$ is not homeomorphic to $\min (\loc X)$} 
    \end{figure}
\end{example}

We now study some topological properties of $\min (\loc X_{\d})$, which we identify with $\max(\d L)$. Since $\max(\d L) = \max(d L)$ for an arithmetic frame $L$, in general $\max(\d L)$ is neither Hausdorff (see  \cite[Example~8.1]{BBM25}) nor compact (see \cite[Example~3.7]{Bha24}).  In fact, $\max(\d L)$ might even be empty (see \cite[Example~8.11]{BBM25}). For this reason, it is customary to assume that an arithmetic frame $L$ has a \emph{unit} since then $\max(d L)$ is a nonempty compact space \cite[Thm.~3.7]{Bha19}. 
We recall:  

\begin{definition}
An element $a$ of a frame $L$ is \emph{dense} if $a^*=0$, and it is a \emph{unit} if it is compact and dense. 
\end{definition}

\begin{example}
In a compact frame, the top element is a unit. 
For an example of a 
non-compact frame 
with a unit, 
let $L = \omega + 1$. It is straightforward to see that $L$ is not compact, but any nonzero $n \in \omega$ is a unit of $L$.  
\end{example}

For an arithmetic frame $L$, the existence of a unit is in general stronger than $\max(\d L)$ being a compact space. 
We recall (see, e.g., \cite[p.~25]{PP21}) that a frame is \emph{max-bounded} if each proper element lies below a maximal element. 
By \cite[Thm.~7.7]{BBM25}, if $L$ is arithmetic and $\d L$ is max-bounded then $\max(\d L)$ is compact iff $L$ has a unit. Since $L$ having a unit implies that $\d L$ is max-bounded (see, e.g., \cite[before Prop.~3.3]{Bha19}), it follows that for an arithmetic frame $L$, $L$ has a unit iff $\max(\d L)$ is compact and $\d L$ is max-bounded. This equivalence fails for stably continuous frames, indicating that the notion of a unit is too restrictive in this setting. We therefore introduce a suitable generalization.

\begin{definition}
    We call an element $a$ of a frame $L$ a \emph{Scott unit}, or an \emph{S-unit} for short, provided $\twoheaduparrow a := \{b \in L \mid a \ll b\}$ is a Scott-open filter consisting only of dense elements.
\end{definition}

\begin{example} \label{example-S-unit}
    Every unit is an S-unit. Indeed, if $a$ is a unit then $\twoheaduparrow a = \upset a$ is a Scott-open filter since $a$ is compact, and it consists only of dense elements since $a$ is dense. However, there are S-units that are not units. Let $L = [0,1]$ be the unit interval with the usual order. Then for each $a \in L$, 
    \[
        a = \bigvee\{b \in L \mid b < a\}.
    \]
    Consequently, $\K(L) = \{0\}$, and hence there are no units in $L$. 
    Moreover, if $a \neq 0$, then $a$ is dense. Therefore, every $a \in (0,1)$ is an $S$-unit since $\twoheaduparrow a =(a,1]$ is a Scott-open filter.
\end{example}

Observe that the frame in the above example is stably continuous. Thus, there exist stably continuous frames with many S-units that have no units at all. In contrast, we show that in  arithmetic frames, having a unit is equivalent to the existence of an S-unit. 

\begin{proposition}\label{prop:units-iff-S-unit}
    Let $L$ be a stably continuous frame. The condition
    \begin{enumerate}[cref=proposition]
        \item $L$ has a unit \label{prop:unit}
    \end{enumerate}
    implies the following equivalent conditions.
    \begin{enumerate}[cref=proposition, resume]
        \item $L$ has an S-unit.
        \label{prop:S-unit-1}
        \item There exists a Scott-open filter $\filter{F}$ of $L$ consisting only of dense elements.\label{prop:S-unit-2}
    \end{enumerate}
    If $L$ is algebraic, 
    all three conditions are equivalent.
\end{proposition}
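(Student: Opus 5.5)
The plan is to prove three implications: (1)$\Rightarrow$(2), (2)$\Leftrightarrow$(3), and, when $L$ is algebraic, (3)$\Rightarrow$(1).

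For (1)$\Rightarrow$(2), we use \cref{example-S-unit}. If $a$ is a unit, then $\twoheaduparrow a=\upset a$ because $a$ is compact. This set is a Scott-open filter. Each $b \ge a$ is dense, since $b^*\le a^*=0$. So $a$ is an S-unit.

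The implication (2)$\Rightarrow$(3) is immediate: take $\filter{F}=\twoheaduparrow a$.

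For (3)$\Rightarrow$(2), let $\filter{F}$ be a Scott-open filter consisting of dense elements.
\begin{enumerate}
\item Pick any $c\in\filter{F}$. By \cref{lem:round-filters} there is $b\in\filter{F}$ with $b\ll c$, and applying \cref{lem:round-filters} again gives $a\in\filter{F}$ with $a\ll b$.
\item Then $a\ll 1$ by \cref{lem:weakening-2}. Since $L$ is stably continuous, \cref{lem:upup-Scott} shows that $\twoheaduparrow a$ is a Scott-open filter.
\item It remains to see that every element of $\twoheaduparrow a$ is dense. If $a\ll d$, then $d\ge a$ by \cref{lem:weakening-1}. As $a\in\filter{F}$, $a$ is dense, so $d^*\le a^*=0$.
\end{enumerate}
Thus $a$ is an S-unit. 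Using $a$ rather than an arbitrary element of $\filter{F}$ is needed because \cref{lem:upup-Scott} requires $a\ll 1$. Choosing $a$ via roundness also keeps $a$ inside $\filter{F}$, which gives density. In fact a single application of \cref{lem:round-filters} already yields $a\ll c\le 1$.

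For (3)$\Rightarrow$(1) with $L$ algebraic (hence arithmetic), take $\filter{F}$ as in (3) and any $c\in\filter{F}$. The directed set $\{k\in\K(L)\mid k\le c\}$ has join $c$ because $L$ is algebraic. Since $\filter{F}$ is Scott open, some compact $k\le c$ lies in $\filter{F}$. Then $k$ is compact and dense, so $k$ is a unit.

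The only point needing care is the $a\ll 1$ hypothesis in \cref{lem:upup-Scott}, and it is handled by the roundness choice above. I do not expect a real obstacle.
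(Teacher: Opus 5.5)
Your proof is correct and matches the paper's argument essentially step for step. The paper applies \cref{lem:round-filters} once to $1 \in \filter{F}$, which is what your closing remark notes. The only variation is in the algebraic case: you obtain (1) from (3) by applying Scott-openness of $\filter{F}$ to the directed join of compacts below $c$, whereas the paper obtains it from (2) via $a \ll b$; this is equivalent, and your version gets the density of $k$ directly from $k \in \filter{F}$.
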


\begin{proof}
    \tref{prop:unit}$\Rightarrow$\tref{prop:S-unit-1} Every unit is an S-unit (see \cref{example-S-unit}).

    \tref{prop:S-unit-1}$\Rightarrow$\tref{prop:S-unit-2} By definition, 
    $\twoheaduparrow a$ is a Scott-open filter consisting only of dense elements. 

    \tref{prop:S-unit-2}$\Rightarrow$\tref{prop:S-unit-1} 
    Since $\filter{F}$ is a Scott-open filter and $1 \in \filter{F}$, 
    \cref{lem:round-filters} implies that there is $a \in \filter{F}$ with $a \ll 1$.
    Therefore, $\twoheaduparrow a$ is a Scott-open filter by \cref{lem:upup-Scott}. Moreover, since $a \in \filter{F}$, $\twoheaduparrow a \subseteq \filter{F}$, so $\twoheaduparrow a$ consists only of dense elements.

    \tref{prop:S-unit-1}$\Rightarrow$\tref{prop:unit} Let $L$ be algebraic. 
    Since $\twoheaduparrow a \neq \varnothing$, there is $b \in \twoheaduparrow a$. Because $L$ is algebraic, ${b = \bigvee\{k \in \K(L) \mid k \leq b\}}$, which is directed, so from $a \ll b$ it follows that there is $k \in \K(L)$ with 
    $a \leq k$.
    Thus, 
    $k$ is dense, and hence $k$ is a unit.
\end{proof}

\begin{lemma}[Basic properties of S-units]
    Let $L$ be a stably continuous frame with an S-unit~$e$.
    \begin{enumerate}[cref=lemma]
        \item $e \ll a$ implies $\d a = 1$. \label{lem:S-unit-ll-implies-1}
        \item $e \ll \d a$ implies $f \ll a$ for some S-unit $f$. \label{lem:S-unit-below}
    \end{enumerate}
\end{lemma}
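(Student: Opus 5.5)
For \cref{lem:S-unit-ll-implies-1}, I will argue directly from the definition of $\d$. Suppose $e \ll a$.

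Since $\twoheaduparrow e$ is a Scott-open filter and $a \in \twoheaduparrow e$, \cref{lem:round-filters} yields $b \in \twoheaduparrow e$ with $b \ll a$. Because $\twoheaduparrow e$ consists only of dense elements, $b^* = 0$, so $b^{**} = 1$. By the definition of $\d$, $\d a \geq b^{**} = 1$, hence $\d a = 1$.

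Alternatively, I can interpolate $e \ll c \ll a$ using \cref{lem:interpolating}; then $c \in \twoheaduparrow e$ is dense and $c \ll a$, and the same conclusion follows.

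For \cref{lem:S-unit-below}, suppose $e \ll \d a$. By definition, $\d a = \bigvee\{b^{**} \mid b \ll a\}$. The set $\{b \mid b \ll a\}$ is directed, since $b_1, b_2 \ll a$ implies $b_1 \vee b_2 \ll a$. As $(-)^{**}$ is monotone, the family $\{b^{**} \mid b \ll a\}$ is directed as well.

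Interpolating with \cref{lem:interpolating}, I get $e \ll c \ll \d a$. Since $c \ll \d a$ and the family above is directed, there is some $b \ll a$ with $c \leq b^{**}$, and so $e \ll b^{**}$.

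The task is now to produce an S-unit below $b$, or at least one that is $\ll a$. The natural candidate is $f = b$ itself, or an interpolant $b \ll b' \ll a$. I need to check that $\twoheaduparrow f$ is a Scott-open filter consisting of dense elements.

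\emph{Scott-openness.} Since $f \ll a \leq 1$, $\twoheaduparrow f$ is a Scott-open filter by \cref{lem:upup-Scott}.

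\emph{Density.} Suppose $f \ll g$. I want $g^* = 0$. The key fact is that $b^{**}$ lies in $\twoheaduparrow e$, so $b^{**}$ is dense. If $x \wedge b = 0$, then $x \leq b^*$, and therefore $x \wedge b^{**} = 0$. Density of $b^{**}$ then forces $x = 0$, so $b$ itself is dense. Now take $f = b$. For $g$ with $b \ll g$ we have $b \leq g$, so $g^* \leq b^* = 0$. Hence every element of $\twoheaduparrow b$ is dense.

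Combining, $b$ is an S-unit with $b \ll a$, so $f = b$ works.

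The main obstacle is the step in \cref{lem:S-unit-below} that extracts a single $b \ll a$ with $e \ll b^{**}$ from $e \ll \d a$. This requires directedness of $\{b^{**} \mid b \ll a\}$, which I verify through closure of $\{b \mid b \ll a\}$ under finite joins together with monotonicity of $(-)^{**}$, followed by interpolation. Once $b^{**}$ is dense, density of $b$ follows because $b^{*} = b^{***}$.
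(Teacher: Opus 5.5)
Your proof is correct and follows essentially the same route as the paper. For the first part you use interpolation, or equivalently here roundness of the Scott-open filter $\twoheaduparrow e$. For the second part you interpolate $e \ll c \ll \d a$, use directedness of $\{b^{**} \mid b \ll a\}$ to obtain $b \ll a$ with $e \ll b^{**}$, deduce that $b$ is dense, and use stable continuity to make $\twoheaduparrow b$ Scott open. You also spell out two steps the paper leaves implicit: why that join is directed, and why density of $b^{**}$ (hence of $b$) forces every element of $\twoheaduparrow b$ to be dense.
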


\begin{proof}
\tref{lem:S-unit-ll-implies-1} Let $e \ll a$. By \cref{lem:interpolating}, $e \ll b \ll a$ for some $b \in L$. Therefore, $b \in \twoheaduparrow e$, so $1 = b^{**} \leq \d a$.

\tref{lem:S-unit-below} Let $e \ll \d a$. By \cref{lem:interpolating}, there exists $b \in L$ such that $e \ll b \ll \d a$. Thus, $b \ll \bigvee\{c^{**} \mid c \ll a\}$ and the join is directed. 
Therefore, there is $f \ll a$ such that $b \leq f^{**}$. 
By \cref{lem:upup-Scott}, $\twoheaduparrow f$ is a Scott-open filter, and $f$ is dense since $f^{**} \in \twoheaduparrow e$ by \cref{lem:weakening-2}. Thus,
$f$ is an S-unit. 
\end{proof}

\begin{proposition} \label{prop:S-unit-implies-max-bounded-and-compact}
    Let $L$ be a stably continuous frame with an S-unit $e$. 
    \begin{enumerate}[cref=proposition]
        \item $\d L$ is max-bounded. \label{prop:S-unit-implies-dL-max-bounded}
        \item $\max(\d L)$ is compact. \label{prop:S-unit-implies-max-dL-compact}
    \end{enumerate}
\end{proposition}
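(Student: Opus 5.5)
For both parts I will work inside the frame $\d L$, which is stably continuous by \cref{closure-under-Scott-cts}, and use the two basic properties of S-units, \cref{lem:S-unit-ll-implies-1,lem:S-unit-below}.

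\textbf{Part \tref{prop:S-unit-implies-dL-max-bounded}.} Let $a\in\d L$ be proper, so $\d a=a\neq 1$. Consider the set
\[
P=\{c\in \d L \mid a\le c,\ e\not\ll c\}.
\]
By \cref{lem:S-unit-ll-implies-1}, if $e\ll a$ then $a=\d a=1$; hence $e\not\ll a$ and $a\in P$. To apply Zorn's lemma, let $C\subseteq P$ be a chain. Its join in $L$ is directed, and since $\d$ is Scott continuous, that join is already a $\d$-element and coincides with the join in $\d L$. Because $e\ll 1$ holds (as $\twoheaduparrow e\neq\varnothing$), the set $\twoheaduparrow e$ is a Scott-open filter. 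So if $e\ll\bigvee C$, then $e\ll c$ for some $c\in C$, a contradiction. Thus $\bigvee C\in P$, and Zorn gives a maximal element $m$ of $P$.

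It remains to show that $m$ is maximal in $\d L$. Here is the key point. Any $\d$-element $c>m$ must satisfy $e\ll c$. Then \cref{lem:S-unit-ll-implies-1} gives $c=\d c=1$. Hence $m\in\max(\d L)$ and $a\le m$, proving max-boundedness.

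\textbf{Part \tref{prop:S-unit-implies-max-dL-compact}.} Take an open cover of $\max(\d L)$ by sets $\max(\d L)\setminus\upset a_i$ with $a_i\in\d L$. Put $a=\bigvee^{\d L}a_i=\d(\bigvee a_i)$. No maximal $\d$-element lies above all the $a_i$, so by part \tref{prop:S-unit-implies-dL-max-bounded} we must have $a=1$. Hence $e\ll 1=\d(\bigvee_i a_i)$.

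By \cref{lem:S-unit-below}, there is an S-unit $f$ with $f\ll\bigvee_i a_i$. Since the join of the finite subjoins is directed, there is a finite $T$ with $f\ll\bigvee_{i\in T}a_i$ (interpolate first, or use that $\twoheaduparrow f$ is Scott open). Then \cref{lem:S-unit-ll-implies-1} applied to the S-unit $f$ gives $\d(\bigvee_{i\in T}a_i)=1$. So no maximal $\d$-element lies above all $a_i$ with $i\in T$, and the finite subfamily indexed by $T$ covers $\max(\d L)$.

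The main obstacle is the limit step in Zorn: checking that joins of chains in $\d L$ agree with joins in $L$ and still avoid $\twoheaduparrow e$. Scott continuity of $\d$ together with Scott openness of $\twoheaduparrow e$ (\cref{lem:upup-Scott}) handles this.
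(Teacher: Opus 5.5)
Your proof is correct and follows essentially the same route as the paper. Part (1) applies Zorn's lemma to the proper $\d$-elements above $a$, which is exactly your set $P$, since for $c\in\d L$ one has $e\ll c$ iff $c=1$; Scott-openness of $\twoheaduparrow e$ handles chain joins. Part (2) passes through \cref{lem:S-unit-below} to an S-unit $f$ way below a finite subjoin, as the paper does. Your appeal to Scott continuity of $\d$ to see that chain joins are $\d$-elements is, if anything, slightly cleaner than the paper's direct way-below computation.
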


\begin{proof}
    \tref{prop:S-unit-implies-dL-max-bounded} Let $a \in \d L \setminus \{1\}$, and let $Z = (\d L \setminus \{1\}) \cap \upset a$. It suffices to show that $\max Z \neq \varnothing$. Suppose $C \subseteq Z$ is a chain. Let $c = \bigvee C$. If $d \ll c$, then $d \leq b$ for some $b \in C$. 
    Thus, $d^{**} \leq \d b =b $, so $\d c \leq c$, and hence $c \in \d L$. If $c = 1$, then $\bigvee C \in \twoheaduparrow e$, so there is $b \in C$ with $b \in \twoheaduparrow e$. By \cref{lem:interpolating}, there is $t$ with $e \ll t \ll b \leq c$. Since $e$ is an S-unit, $t$ is dense, so $t^{**} = 1 \leq \d b = b$, contradicting that $b \in Z$. Applying Zorn's Lemma completes the proof. 

    \tref{prop:S-unit-implies-max-dL-compact} Suppose $\max(\d L) = \bigcup \{\max(\d L) \setminus \upset s \mid s \in S\}$ for some $S \subseteq \d L$. If $\bigvee_{\d L} S \neq 1$, then since $\d L$ is max-bounded, there is $a \in \max (\d L)$ with $\bigvee_{\d L} S \leq a$. This implies that $s \leq a$ for each $s \in S$, so $a \notin \max(\d L) \setminus \upset s$ for each $s \in S$, a contradiction. Therefore, $\bigvee_{\d L} S = \d (\bigvee S) \in \twoheaduparrow e$. By \cref{lem:S-unit-below}, there is an S-unit $f \ll \bigvee S$. 
    Since $\twoheaduparrow f$ is a Scott-open filter, \cref{lem:SFilt-finite-joins} yields that $f \ll \bigvee T$ for some finite $T \subseteq S$.
    Thus, by \cref{lem:S-unit-ll-implies-1}, $\d (\bigvee T) = \bigvee_{\d L} T = 1$. 
    We claim that $\bigcup \{\max(\d L) \setminus \upset t \mid t \in T\} = \max(\d L)$. Let $b \in \max(\d L)$. 
    Since $\bigvee_{\d L} T =1 \nleq b$, there is $t \in T$ with $t \nleq b$. Thus, $b \in \max(\d L) \setminus \upset t$, as required.
\end{proof}

In \cref{thm:max-dL-compactness}, we will give a dual description of an S-unit, which yields the converse of \cref{prop:S-unit-implies-max-bounded-and-compact}. This requires some preparation.
Recall that if $X$ is the \L-space of a frame $L$, 
then for each $F \in \clup(X)$, the set $\Phi(F) \in \Filt(L)$ is the corresponding filter (see \cref{rem:closed-upsets-and-filters}). 
It is straightforward to see that $a \in L$ is dense iff $\max X \subseteq \sigma(a)$ (see, e.g., \cite[Rem.~4.4]{BGJ16}). Therefore, 
$\Phi(F)$ consists only of dense elements iff $\max X \subseteq F$.

\begin{lemma} \label{lem:dense-filter}
    Let $L$ be a stably continuous frame, $X$ its \L-space, and $F \in \SUp(X)$. The following are equivalent.
    \begin{enumerate}[cref=lemma]
        \item $\loc X_{\d} \subseteq F$. \label{lem:dense-filter-1}
        \item $X_{\d} \subseteq F$. \label{lem:dense-filter-1.5}
        \item $\max X \subseteq F$.\label{lem:dense-filter-2}
        \item $\Phi(F)$ consists only of dense elements.\label{lem:dense-filter-3}
    \end{enumerate}
\end{lemma}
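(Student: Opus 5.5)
We prove the cycle (1)$\Rightarrow$(3)$\Rightarrow$(2)$\Rightarrow$(1), and (3)$\Leftrightarrow$(4) separately. The last equivalence is the observation made just before the lemma: $a$ is dense iff $\max X \subseteq \sigma(a)$. Hence, by \cref{rem:closed-upsets-and-filters}, $\Phi(F)$ consists of dense elements iff $\max X \subseteq \bigcap\{\sigma(a) \mid a \in \Phi(F)\} = \Sigma(\Phi(F)) = F$.

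(2)$\Rightarrow$(1) is immediate, since $\loc X_{\d} = X_{\d} \cap \loc X \subseteq X_{\d}$ (\cref{rem:nuclear-and-priestley}).

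(3)$\Rightarrow$(2) follows from density of $\d$. We have $\d 0 \le 0^{**} = 0$ by \cref{d-order-pres}, so $\d$ is dense, and \cref{thm:dense} gives that $X_{\d}$ is cofinal. In particular, $\max X \subseteq X_{\d}$. This gives the opposite inclusion to what is needed, so instead we use that $F$ is an upset. Given $x \in X_{\d}$, \cref{lem:min-max-nonempty} provides some $m \in \max X$ with $x \le m$. However, this still yields only $m \in F$, not $x \in F$.

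A different route is therefore needed for $x \in X_{\d}$. Let $x \in X_{\d}$ and suppose $x \notin F$. By \cref{lem:Scott-intersection-con} there is $U \in \clopup(X)$ with $F \subseteq U$ and $x \notin \con U$. Since $\max X \subseteq F \subseteq \con U$, every point $y$ satisfies $\max\upset y \subseteq \con U$, so $(\con U)^{**} = X$. By \cref{prop:con-negneg}, $\d U = X$, so $x \in \d U$, and since $x \in X_{\d}$ we get $x \in U$. To reach a contradiction we need $x \in F$ itself, not merely $x \in U$. Because $F = \bigcap\{U \in \clopup(X) \mid F \subseteq U\}$ (\cref{lem:intersections-of-clopens} together with the fact that $F \subseteq \con U \subseteq U$), the argument above, run for every such $U$, yields $x \in F$. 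So we should run it directly: for every clopen upset $U \supseteq F$ we have $\max X \subseteq \con U$ by \cref{lem:Scott-intersection-con}, hence $\d U = X$, hence $x \in U$. Intersecting over all such $U$ gives $x \in F$.

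(1)$\Rightarrow$(3) is the main obstacle. Fix $m \in \max X$; the plan is to locate a point of $\loc X_{\d}$ below $m$ and then use that $F$ is an upset. Consider the set of localic points below $m$. It is nonempty by spatiality (\cref{L-cts-implies-spatial}) applied to the neighbourhoods $\downset V$ of $m$ (if necessary, combined with the fact that $\upset m$ is closed). If there is a greatest localic point $x$ below $m$, then $x \in \loc X_{\d}$ by \cref{thm:loc-Xd-eqv-conditions}, so $x \in F$ and hence $m \in F$.

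The existence of a greatest localic point below $m$ is not guaranteed, so we argue by contradiction, avoiding that issue. Suppose $m \notin F$. By \cref{lem:Scott-intersection-con} pick $U \in \clopup(X)$ with $F \subseteq \con U$ and $m \notin U$. Since $\loc X_{\d} \subseteq F \subseteq U$ and $X_{\d}$ is the closure-relevant nuclear set, we aim to show $m \notin \d U$ contradicts $\d$ being dense. Concretely, $m \notin U$ and $m \in X_{\d}$ (cofinality) give, via \cref{new-Xd-2}, a clopen downset $D \ni m$ with $D \subseteq \downset(\max X\setminus \con U)$. Every localic $y \in D$ then has some maximal point above it outside $\con U$. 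Using \cref{L-cts-implies-spatial} and compactness, we extract a localic point of $X_{\d}$ inside $D$, for instance via \cref{rem: min primes}: a maximal localic point below a maximal point outside $\con U$ lies in $\loc X_{\d} \subseteq F \subseteq \con U$, whereas, being the greatest localic point below that maximal point, it should lie outside $\con U$ since $\max\upset y \not\subseteq F$. Making this extraction precise, ensuring that the chosen maximal localic point sits below the offending maximal point, is the delicate step.
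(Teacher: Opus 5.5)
Your arguments for (3)$\Leftrightarrow$(4) and (2)$\Rightarrow$(1) are fine. The version of (3)$\Rightarrow$(2) you finally settle on is also correct. For every $U\in\clopup(X)$ with $F\subseteq U$, \cref{lem:Scott-intersection-con} gives $\max X\subseteq F\subseteq\con U$, so $(\con U)^{**}=X$ and $\d U=X$. Hence $x\in U$ for each $x\in X_{\d}$, and intersecting via \cref{lem:intersections-of-clopens} gives $x\in F$. This differs from the paper, which proves (3)$\Rightarrow$(1) using \cref{ddu-points-4}; your route avoids the characterization of $\loc X_{\d}$.

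The gap is (1)$\Rightarrow$(3), which you leave unfinished. So the cycle does not close, and (1) is never shown to imply the other conditions. Neither of your attempted routes works as written:
\begin{itemize}
\item The claim that some localic point lies below every $m\in\max X$ is false in general. Spatiality puts localic points in every clopen neighbourhood of $m$, not in the closed set $\downset m$. For $L=\Omega(\mathbb{R})$, let $m$ be a maximal filter of opens containing every $(n,\infty)$. The localic points are the neighbourhood filters, and none is contained in $m$, since $(x-1,x+1)\cap(n,\infty)=\varnothing$ for $n>x+1$.
\item The route through maximal localic points fails too. A maximal localic point $x$ above a localic $y\in D$ need not lie in the downset $D$. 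Moreover, $\max\upset x\subseteq\con U$ says nothing about the larger set $\max\upset y\supseteq\max\upset x$, so no contradiction arises.
\item A smaller slip: \cref{lem:Scott-intersection-con} yields $m\notin\con U$, not $m\notin U$. For the latter, use \cref{lem:intersections-of-clopens}, after which $F\subseteq\con U$ follows from \cref{lem:Scott-intersection-con}.
\end{itemize}

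The missing idea is that $\loc X_{\d}$ is dense in $X_{\d}$. Since $\d L$ is stably continuous by \cref{closure-under-Scott-cts}, it is spatial. By \cref{rem:nuclear-and-priestley}, $X_{\d}$ is the \L-space of $\d L$, with localic part $X_{\d}\cap\loc X=\loc X_{\d}$. So \cref{thm:spatiality} gives $X_{\d}=\cl(\loc X_{\d})$.

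Because $F$ is closed, (1) immediately yields (2). Then (3) follows from $\max X\subseteq X_{\d}$, which you already obtained from the density of $\d$ and \cref{thm:dense}. This is exactly the paper's (1)$\Rightarrow$(2)$\Rightarrow$(3).

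With this fact your contradiction argument could also be completed. $D\cap X_{\d}$ is a nonempty open subset of $X_{\d}$, so it contains some $y\in\loc X_{\d}\subseteq F\subseteq\con U$. Hence $\max\upset y\subseteq\con U$, contradicting $D\subseteq\downset(\max X\setminus\con U)$. The direct argument makes this unnecessary, though.
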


\begin{proof}
    \tref{lem:dense-filter-1}$\Rightarrow$\tref{lem:dense-filter-1.5} 
    Let $\loc X_{\d} \subseteq F$. Since $\d L$ is stably continuous, it is spatial, so ${X_{\d} = \cl (\loc X_{\d})}$ by \cref{thm:spatiality}. Thus, since $F$ is closed, $X_{\d} \subseteq F$.
    
    \tref{lem:dense-filter-1.5}$\Rightarrow$\tref{lem:dense-filter-2} 
    Suppose $X_{\d} \subseteq F$. 
    Since $\d$ is dense,  $\max X \subseteq X_{\d}$ by \cref{thm:dense}. Therefore, $\max X \subseteq F$.

    \tref{lem:dense-filter-2}$\Rightarrow$\tref{lem:dense-filter-1} 
    Let $x \in \loc X_{\d}$. 
    Then $\max \upset x \subseteq \max X \subseteq F$, so $x\in F$ by \cref{ddu-points-4}. 

    \tref{lem:dense-filter-2}$\Leftrightarrow$\tref{lem:dense-filter-3} This is explained in the paragraph before the lemma.
\end{proof}

\begin{proposition} \label{lem:compactness}
    For a stably continuous \L-space $X$, $\min (\loc X_{\d})$ is compact iff $\upset \min (\loc X_{\d})$ is a Scott upset.
\end{proposition}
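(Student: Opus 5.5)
The plan is to write $M := \min(\loc X_{\d})$ and argue directly from the definitions. Recall that the topology on $\loc X$ is $\{U \cap \loc X \mid U \in \clopup(X)\}$, and that $M$ carries the subspace topology. So the open sets of $M$ are exactly the sets $U \cap M$ with $U \in \clopup(X)$. One observation serves both directions: every minimal element of $\upset M$ lies in $M$. Indeed, if $y \in \min \upset M$, then $m \le y$ for some $m \in M$, and minimality forces $y = m$. Consequently $\min \upset M \subseteq M \subseteq \loc X_{\d} \subseteq \loc X$. Since $\upset M$ is an upset, \cref{def:Scott-upset} then reduces the claim to the following equivalence: $M$ is compact iff $\upset M$ is closed in $X$.

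($\Leftarrow$) Suppose $\upset M$ is a Scott upset, and let $\{U_i \cap M\}_{i \in I}$ with $U_i \in \clopup(X)$ be an open cover of $M$. Then $M \subseteq \bigcup_i U_i$. Since each $U_i$ is an upset, this gives $\upset M \subseteq \bigcup_i U_i$. Now $\upset M$ is closed in the Stone space $X$, hence compact, so finitely many $U_{i_1},\dots,U_{i_n}$ cover $\upset M$. The corresponding sets $U_{i_k}\cap M$ then cover $M$.

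($\Rightarrow$) Suppose $M$ is compact. I would show that
\[
\upset M = \bigcap\{U \in \clopup(X) \mid M \subseteq U\},
\]
which makes $\upset M$ closed. The inclusion $\subseteq$ is clear. For $\supseteq$, let $x \notin \upset M$. For each $m \in M$ we have $m \nleq x$, so Priestley separation yields $U_m \in \clopup(X)$ with $m \in U_m$ and $x \notin U_m$. The sets $U_m \cap M$ form an open cover of $M$. By compactness, finitely many of them cover $M$, say those for $m_1,\dots,m_n$. Then $U := U_{m_1}\cup\dots\cup U_{m_n}$ is a clopen upset containing $M$ and not containing $x$. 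Together with the observation in the first paragraph, this shows $\upset M \in \SUp(X)$.

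I do not expect a real obstacle here. The one point needing care is the bookkeeping of the topology: basic opens of $M$ must be traces of clopen upsets, and not of arbitrary clopens. This is exactly what allows Priestley separation in ($\Rightarrow$) to produce opens of $M$, and what allows the upward closure step in ($\Leftarrow$).
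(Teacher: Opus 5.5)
Your proposal is correct and follows essentially the same route as the paper. In ($\Rightarrow$) you use Priestley separation and compactness of $\min(\loc X_{\d})$ to exclude each $x \notin \upset \min(\loc X_{\d})$ by a clopen upset, noting that the minimal points of $\upset \min(\loc X_{\d})$ lie in $\loc X_{\d} \subseteq \loc X$ (the paper gets this last inclusion from $\loc X_{\d} = X_{\d} \cap \loc X$ in \cref{rem:nuclear-and-priestley}); in ($\Leftarrow$) you use compactness of the closed set $\upset \min(\loc X_{\d})$ together with the fact that the covering clopen sets are upsets, exactly as the paper does.
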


\begin{proof}
    ($\Rightarrow$) 
    Clearly, $F:= \upset \min (\loc X_{\d})$ is an upset. Also, by \cref{rem:nuclear-and-priestley}, ${\loc X_{\d} = X_{\d} \cap \loc X}$, and hence  $\min F = \min (\loc X_{\d}) \subseteq \loc X$. It remains to show that $F$ is closed. If $x \notin F$, then $y \nleq x$ for all $y \in \min (\loc X_{\d})$, so by Priestley separation, there is $U_y \in \clopup(X)$ such that $y \in U_y$ and $x \notin U_y$. Therefore, $\min(\loc X_{\d}) \subseteq \bigcup U_y$, so $\min(\loc X_{\d}) \subseteq \bigcup (U_y \cap \min (\loc X_{\d}))$. Since $\min(\loc X_{\d})$ is compact, there is $U \in \clopup(X)$ with $\min(\loc X_{\d}) \subseteq U$ and $x\notin U$. Because $U$ is an upset, $F \subseteq U$, so $F$ is closed, and hence $F$ is a Scott upset.

    ($\Leftarrow$) 
    Suppose $\min(\loc X_{\d}) \subseteq \bigcup \{U \cap \min (\loc X_{\d}) \mid U \in \mathcal U\}$ for some $\mathcal U \subseteq \clopup(X)$. 
    Then $\upset \min(\loc X_{\d}) \subseteq \bigcup \mathcal U$, and since $\upset \min (\loc X_{\d})$ is closed, by compactness there is a finite $\mathcal V \subseteq \mathcal U$ such that $\min (\loc X_{\d}) \subseteq \bigcup \mathcal V$. Thus, $\min (\loc X_{\d}) \subseteq \bigcup\{U \cap \min(\loc X_{\d}) \mid U \in \mathcal V\}$, and hence $\min (\loc X_{\d})$ is compact.
\end{proof}

\begin{proposition} \label{lem:max-bounded}
    Let $L$ be a spatial frame and $X$ its \L-space. Then $L$ is max-bounded iff $\loc X \subseteq \upset \min (\loc X)$.
\end{proposition}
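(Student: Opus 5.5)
We translate max-boundedness into the \L-space language through the identifications already available. Identify $L$ with $\clopup(X)$. A proper element is a clopen upset $U \neq X$, and by \cref{prop:max-L-and-min-loc-X} (which needs spatiality) the maximal elements are exactly the sets $X \setminus \{x\}$ with $x \in \min(\loc X)$. Hence $U \subseteq X\setminus\{x\}$ iff $x \notin U$. So $L$ is max-bounded iff every clopen upset $U \neq X$ misses some point of $\min(\loc X)$.

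For ($\Leftarrow$), assume $\loc X \subseteq \upset \min(\loc X)$ and let $U \neq X$ be a clopen upset. Since $X$ is \L-spatial, $X \setminus U$ is a nonempty open set, so it contains a localic point $y$. By hypothesis there is $x \in \min(\loc X)$ with $x \le y$. Since $U$ is an upset and $y \notin U$, we get $x \notin U$. Thus $U \subseteq X \setminus \{x\}$, which is maximal.

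For ($\Rightarrow$), assume $L$ is max-bounded and let $y \in \loc X$. The set $\downset y$ is open, and it is closed by \cref{lem:up-down-closed}, so $U := X \setminus \downset y$ is a clopen upset. It is proper because $y \notin U$. By max-boundedness, $U \subseteq X\setminus\{x\}$ for some $x \in \min(\loc X)$. Then $x \notin U$, so $x \in \downset y$, that is, $x \le y$. Hence $y \in \upset \min(\loc X)$.

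The only subtle points are the facts that $\downset y$ is clopen for localic $y$ and that localic points are dense in every nonempty open set, including $X\setminus U$. The first follows from closedness of $\downset$ of a point; the second is exactly \L-spatiality via \cref{thm:spatiality}. Everything else is the bookkeeping supplied by \cref{prop:max-L-and-min-loc-X}, so I expect no real obstacle.
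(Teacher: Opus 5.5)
Your proof is correct and follows the paper's proof step for step. Both directions use \cref{prop:max-L-and-min-loc-X} to identify the maximal elements of $\clopup(X)$ with the sets $X \setminus \{x\}$ for $x \in \min(\loc X)$: ($\Rightarrow$) applies max-boundedness to the proper clopen upset $X \setminus \downset y$ for localic $y$, and ($\Leftarrow$) uses \L-spatiality to find a localic point outside a proper clopen upset $U$. You also justify that $\downset y$ is clopen, which the paper leaves implicit.
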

\begin{proof}
    It suffices to show that $\clopup(X)$ is max-bounded iff $\loc X \subseteq \upset \min (\loc X)$.

    ($\Rightarrow$) 
    Let $x \in \loc X$. Then $U:= X \setminus \downset x \in \clopup(X)$. Since $U$ is proper and $\clopup(X)$ is max-bounded, there is $V \in \max (\clopup(X))$ with $U \subseteq V$. 
    By \cref{prop:max-L-and-min-loc-X}, there is $y \in \min (\loc X)$ with $V = X \setminus \{y\}$. 
    Thus, $y \in \downset x$, so $y \leq x$, and hence $\loc X \subseteq \upset \min (\loc X)$.

    ($\Leftarrow$) Let $U \in \clopup(X)$ be proper. Since $X$ is \L-spatial, there is $x \in \loc X \setminus U$. By assumption, there is $y \in \min (\loc X)$ with $y \leq x$. By \cref{prop:max-L-and-min-loc-X}, $V := X \setminus \{y\} \in \max (\clopup(X))$. Moreover, $U \subseteq V$ since $y \notin U$, and hence $\clopup(X)$ is max-bounded.
\end{proof}

\begin{theorem}\label{thm:max-dL-compactness}
    Let $L$ be a stably continuous frame and $X$ its \L-space. The following are equivalent.
    \begin{enumerate}[cref=theorem]
        \item $L$ has an S-unit. \label{thm:S-unit-iffs-1}
        \item There exists $F \in \SUp(X)$ such that $\max X \subseteq F$.\label{thm:S-unit-iffs-2}
        \item $\min (\loc X_{\d})$ is compact and $\loc X_{\d} \subseteq \upset \min (\loc X_{\d})$.\label{thm:S-unit-iffs-3}
        \item $\max(\d L)$ is compact and $\d L$ is max-bounded.\label{thm:S-unit-iffs-4}
    \end{enumerate}
\end{theorem}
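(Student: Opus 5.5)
We prove the cycle \ref{thm:S-unit-iffs-1}$\Rightarrow$\ref{thm:S-unit-iffs-4}$\Rightarrow$\ref{thm:S-unit-iffs-3}$\Rightarrow$\ref{thm:S-unit-iffs-2}$\Rightarrow$\ref{thm:S-unit-iffs-1}, using the dictionary between Scott-open filters and Scott upsets.

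\textbf{\ref{thm:S-unit-iffs-1}$\Rightarrow$\ref{thm:S-unit-iffs-4}.} This is exactly \cref{prop:S-unit-implies-max-bounded-and-compact}.

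\textbf{\ref{thm:S-unit-iffs-4}$\Rightarrow$\ref{thm:S-unit-iffs-3}.} First, by \cref{cor:min-loc-Xd=max-dL}, $\max(\d L)$ is homeomorphic to $\min(\loc X_{\d})$, so the latter is compact. Second, $\d L$ is stably continuous by \cref{closure-under-Scott-cts}, hence spatial. Its \L-space is $X_{\d}$, and by \cref{rem:nuclear-and-priestley} its localic part is $\loc X_{\d}$. Applying \cref{lem:max-bounded} to $\d L$ therefore translates max-boundedness into $\loc X_{\d}\subseteq \upset \min(\loc X_{\d})$. One should check that the order and topology on $X_{\d}$ are the restricted ones, so that minimality and localicness computed inside $X_{\d}$ agree with the notions used in the statement. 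This is fine because $\loc X_{\d}=X_{\d}\cap\loc X$.

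\textbf{\ref{thm:S-unit-iffs-3}$\Rightarrow$\ref{thm:S-unit-iffs-2}.} Set $F=\upset\min(\loc X_{\d})$. By \cref{lem:compactness}, compactness of $\min(\loc X_{\d})$ makes $F$ a Scott upset. Now $\loc X_{\d}\subseteq F$ by hypothesis, so \cref{lem:dense-filter} (\ref{lem:dense-filter-1}$\Rightarrow$\ref{lem:dense-filter-2}) gives $\max X\subseteq F$.

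\textbf{\ref{thm:S-unit-iffs-2}$\Rightarrow$\ref{thm:S-unit-iffs-1}.} Given such an $F$, the filter $\Phi(F)$ is Scott-open by \cref{thm:HM-Priestley}. It consists only of dense elements by \cref{lem:dense-filter} (\ref{lem:dense-filter-2}$\Rightarrow$\ref{lem:dense-filter-3}), or directly from the remark preceding that lemma. Then \cref{prop:units-iff-S-unit} (\ref{prop:S-unit-2}$\Rightarrow$\ref{prop:S-unit-1}) yields an S-unit.

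The only delicate step is \ref{thm:S-unit-iffs-4}$\Rightarrow$\ref{thm:S-unit-iffs-3}. There, \cref{lem:max-bounded} must be applied to the frame $\d L$ with \L-space $X_{\d}$ rather than to $L$, and the identification must be made carefully. The remaining implications are direct assemblies of earlier results.
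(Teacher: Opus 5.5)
Your proof is correct, but it closes the cycle differently from the paper. For (2)$\Rightarrow$(1), (3)$\Rightarrow$(2) and (3)$\Leftrightarrow$(4) the paper argues essentially as you do. Its substantive step, however, is a direct Priestley-side proof of (2)$\Rightarrow$(3). Given a Scott upset $F\supseteq\max X$, it applies the Priestley characterization of Scott-continuous nuclei (\cref{prop:nuclei-continuous}) to $\d$ to see that $F'=\upset(F\cap X_{\d})$ is a Scott upset containing $\max X$. Hence $\loc X_{\d}\subseteq F'$ by \cref{lem:dense-filter}. It then checks that $\min F'=\min(\loc X_{\d})$, so $\upset\min(\loc X_{\d})=F'$ is a Scott upset, and compactness follows from \cref{lem:compactness}. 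You instead obtain (1)$\Rightarrow$(4) from the frame-theoretic \cref{prop:S-unit-implies-max-bounded-and-compact}, proved by Zorn's lemma and the Scott-open filter of elements way above the S-unit, and then translate (4)$\Rightarrow$(3). There is no circularity, since that proposition is proved independently of the theorem, and your route is shorter. The paper's route buys two things. First, it is a self-contained dual argument, so the theorem reproves \cref{prop:S-unit-implies-max-bounded-and-compact} rather than depending on it. Second, it gives extra information: since $\upset(F\cap X_{\d})\subseteq F$ for every Scott upset $F\supseteq\max X$, it shows that, whenever (2) holds, $\upset\min(\loc X_{\d})$ is the least Scott upset containing $\max X$. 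Dually, $\Phi(\upset\min(\loc X_{\d}))$ is then the largest Scott-open filter consisting of dense elements.
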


\begin{proof}
    \tref{thm:S-unit-iffs-1}$\Leftrightarrow$\tref{thm:S-unit-iffs-2}
    Apply \cref{prop:units-iff-S-unit,lem:dense-filter}.     

    \tref{thm:S-unit-iffs-2}$\Rightarrow$\tref{thm:S-unit-iffs-3}
    By \cref{prop:nuclei-continuous}, $F' := \upset(F \cap X_{\d})$ is a Scott upset. By \cref{thm:dense}, $\max X \subseteq X_{\d}$, and so $\max X \subseteq F'$. Therefore, $\loc X_{\d} \subseteq F'$ by \cref{lem:dense-filter}. 
    By \cref{lem:min-max-nonempty}, 
    \begin{equation}
        \loc X_{\d} \subseteq F' = \upset \min F'. \label{eq:A}
    \end{equation}
    Thus, it suffices to show that $\min F' = \min (\loc X_{\d})$. Indeed, then $\upset \min (\loc X_{\d}) = F'$ is a Scott upset, so $\min (\loc X_{\d})$ is compact by \cref{lem:compactness}, and $\loc X_{\d} \subseteq \upset \min (\loc X_{\d})$ by \eqref{eq:A}. 
    For the left-to-right inclusion,
    suppose $x \in \min F'$. Then $x \in \loc X$ since $F'$ is a Scott upset and $x \in X_{\d}$ since $\min F' \subseteq F \cap X_{\d}$. Thus, $x \in \loc X_{\d}$ by \cref{rem:nuclear-and-priestley}. Now suppose $y \in \loc X_{\d}$ and $y \leq x$. Then $y \in F'$ by \eqref{eq:A}, so $x = y$ since $x \in \min F'$. Therefore, $x \in \min (\loc X_{\d})$. 
    For the reverse inclusion,
    suppose $x \in \min (\loc X_{\d})$. Then $x \in F'$ by \eqref{eq:A}. By \cref{lem:min-max-nonempty}, there is $y \in \min F'$ with $y \leq x$. 
    Since $y \in \loc X_{\d}$, we must have $x = y$ because $x \in \min (\loc X_{\d})$. Thus, $x \in \min F'$.

    \tref{thm:S-unit-iffs-3}$\Rightarrow$\tref{thm:S-unit-iffs-2} By \cref{lem:compactness}, $F := \upset \min (\loc X_{\d})$ is a Scott upset, 
    and ${\max X \subseteq F}$ by \cref{lem:dense-filter}.
    
    \tref{thm:S-unit-iffs-3}$\Leftrightarrow$\tref{thm:S-unit-iffs-4} Apply \cref{cor:min-loc-Xd=max-dL,lem:max-bounded}.
\end{proof}

As an immediate consequence of \cref{prop:units-iff-S-unit,thm:max-dL-compactness}, we obtain:

\begin{corollary}
    If $L$ is an arithmetic frame, then $L$ has a unit iff $\max(d L)$ is compact and $d L$ is max-bounded.
\end{corollary}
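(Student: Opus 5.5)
The plan is to combine \cref{prop:units-iff-S-unit} with \cref{thm:max-dL-compactness} and \cref{c-3}. Let $L$ be an arithmetic frame. By the proposition preceding \cref{def:max-spec}'s section (arithmetic iff algebraic and stably continuous), $L$ is both algebraic and stably continuous. Hence all the earlier results stated for stably continuous frames apply to $L$, and so do those requiring algebraicity.

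First, since $L$ is algebraic, \cref{prop:units-iff-S-unit} says that $L$ has a unit iff $L$ has an S-unit; in that proposition all three conditions become equivalent when $L$ is algebraic. Next, since $L$ is stably continuous, the equivalence \tref{thm:S-unit-iffs-1}$\Leftrightarrow$\tref{thm:S-unit-iffs-4} of \cref{thm:max-dL-compactness} gives that $L$ has an S-unit iff $\max(\d L)$ is compact and $\d L$ is max-bounded. Finally, by \cref{c-3}, $\d = d$ on the algebraic frame $L$, so $\d L = dL$ and $\max(\d L) = \max(dL)$ as spaces, because the topology on $\max(\cdot)$ depends only on the frame. Chaining these three equivalences yields the corollary.

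No step here is a real obstacle. The only point needing care is that $dL$ and $\d L$ must coincide as frames, not merely as sets, so that max-boundedness and the spectrum topology transfer. This is immediate, since the two nuclei are equal and therefore have the same image with the same induced order.
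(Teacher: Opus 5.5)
Your proposal is correct and is essentially the paper's own argument. The paper derives the corollary directly from \cref{prop:units-iff-S-unit} (unit iff S-unit for algebraic stably continuous $L$) and \cref{thm:max-dL-compactness} (S-unit iff $\max(\d L)$ is compact and $\d L$ is max-bounded), with $\d = d$ from \cref{c-3}; your remark that $dL$ and $\d L$ coincide as posets, so that the spectrum topology and max-boundedness transfer, just makes explicit what the paper leaves implicit.
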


We next turn our attention to the question of Hausdorffness of $\max(dL)$.
In \cite[Cor.~8.14]{BBM25}, it was shown that if $L$ is an arithmetic frame with a unit, then $\max(d L)$ is Hausdorff iff it is stably locally compact. The key ingredient was
the following lemma:

\begin{lemma}[{\cite[Lem.~8.12]{BBM25}}]
    If $X$ is a stably locally compact space, then $X$ is Hausdorff iff $X$ is $T_1$.
\end{lemma}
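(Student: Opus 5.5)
The implication from Hausdorff to $T_1$ is immediate, so the work is in showing that a stably locally compact $T_1$-space $X$ is Hausdorff. I would argue by contradiction, using a filtered family of compact neighborhoods together with the standard sober-space fact (a consequence of the Hofmann--Mislove Theorem, see \cite{GH+03}) that a filtered family of compact saturated sets has compact saturated intersection, and that if this intersection lies in an open set $W$, then some member of the family already lies in $W$.

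The first observation is that in a $T_1$-space the specialization order is trivial, so every subset is saturated. In particular, every compact set is compact saturated. The second observation: for $x \in X$, let $\mathcal K_x$ be the family of compact neighborhoods of $x$, that is, compact $K$ with $x \in \intr K$. Local compactness gives $\bigcap \mathcal K_x = \bigcap\{U \mid x \in U \text{ open}\} = \{x\}$, where the last equality uses $T_1$. By the stability hypothesis, $\mathcal K_x$ is closed under finite intersections: $K \cap K'$ is compact and is again a neighborhood of $x$. Hence $\mathcal K_x$ is filtered.

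Now suppose $x \neq y$ cannot be separated by disjoint open sets. I would consider the family $\{K \cap K' \mid K \in \mathcal K_x,\ K' \in \mathcal K_y\}$. It is filtered by the previous step. Each member is compact by stability, and each is nonempty, because $\intr K$ and $\intr K'$ are open neighborhoods of $x$ and $y$ and so they intersect. Its intersection is contained in $\{x\} \cap \{y\} = \varnothing$. Applying the Hofmann--Mislove consequence with $W = \varnothing$, some member $K \cap K'$ is contained in $\varnothing$, which is a contradiction. Hence $x$ and $y$ have disjoint open neighborhoods, and $X$ is Hausdorff.

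The main point to get right is the appeal to sobriety through Hofmann--Mislove, namely that filtered intersections of nonempty compact saturated sets are nonempty. Here we are using that stably locally compact spaces are sober by definition, as members of $\SLCSp \subseteq \LCSob$. The remaining steps are routine.
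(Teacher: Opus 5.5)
Your proof is correct. The paper gives no proof of this lemma, so there is nothing in it to measure your argument against: the lemma is imported from \cite[Lem.~8.12]{BBM25} and used only to deduce the theorem that follows.

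Your argument is self-contained and rests on three facts:
\begin{enumerate}
\item in a $T_1$-space every set is saturated, so the stability hypothesis applies to arbitrary compact sets;
\item local compactness gives $\bigcap\mathcal K_x=\{x\}$;
\item sober spaces are well-filtered.
\end{enumerate}

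You are right that well-filteredness is the crux, and it cannot be dropped. An infinite set with the cofinite topology is $T_1$ and locally compact, and every subset is compact, so the stability condition holds; yet it is not Hausdorff. There the cofinite sets form a filtered family of nonempty compact saturated sets with empty intersection. So well-filteredness, and hence sobriety, fails exactly at the step you flagged.

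If you want a slightly leaner version, you can avoid the contradiction and use only $\mathcal K_y$. Since $X\setminus\{y\}$ is an open neighborhood of $x$, local compactness gives $K_0\in\mathcal K_x$ with $y\notin K_0$. Then $\{K_0\cap K'\mid K'\in\mathcal K_y\}$ is a filtered family of compact saturated sets whose intersection is $K_0\cap\{y\}=\varnothing$. Well-filteredness then yields $K'\in\mathcal K_y$ with $K_0\cap K'=\varnothing$, so $\intr K_0$ and $\intr K'$ are the required disjoint open neighborhoods of $x$ and $y$.
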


As an immediate consequence of the above lemma, a $T_1$-space is locally compact Hausdorff iff it is stably locally compact. 
Therefore, since $\max(\d L)$ is $T_1$, and is compact whenever $L$ has an $S$-unit, the characterization of \cite{BBM25} extends to the stably continuous setting:

\begin{theorem}
    Let $L$ be a stably continuous frame. 
    Then $\max(\d L)$ is locally compact Hausdorff iff $\max(\d L)$ is stably locally compact. 
    In particular, if $L$ has an $S$-unit, then $\max(\d L)$ is Hausdorff iff $\max(\d L)$ is stably locally compact.
\end{theorem}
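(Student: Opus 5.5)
This theorem should follow formally from facts already recorded, so the plan is to assemble them rather than to do new Priestley-space work. The first ingredient is \cref{prop:max-ddL-T1}: for any frame $M$, $\max(M)$ is $T_1$. We apply it with $M = \d L$, which is a frame because $\d$ is a nucleus on the stably continuous frame $L$ (\cref{c-2}). So $\max(\d L)$ is always a $T_1$-space.

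For the first assertion we prove both directions.

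Suppose first that $\max(\d L)$ is locally compact Hausdorff. Locally compact Hausdorff spaces are stably locally compact: they are sober and locally compact, and compact saturated sets are just compact sets, whose binary intersections are compact because compact sets are closed in a Hausdorff space. This is exactly the observation in \cref{sec:Preliminaries} that $\LCHaus$ is a full subcategory of $\SLCSp$.

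Conversely, suppose $\max(\d L)$ is stably locally compact. By the lemma just cited (\cite[Lem.~8.12]{BBM25}), a stably locally compact $T_1$-space is Hausdorff. Since $\max(\d L)$ is $T_1$, it is Hausdorff. Stable local compactness includes local compactness, so $\max(\d L)$ is locally compact Hausdorff.

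For the ``in particular'' clause, assume $L$ has an S-unit. Then $\max(\d L)$ is compact by \cref{prop:S-unit-implies-max-dL-compact}. A compact Hausdorff space is locally compact, so in this case ``Hausdorff'' and ``locally compact Hausdorff'' coincide for $\max(\d L)$. The claim then follows from the first assertion.

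The main point needing care is the convention that ``stably locally compact'' includes sobriety and local compactness, as in the definition of $\SLCSp$. The direction ``stably locally compact $\Rightarrow$ Hausdorff'' rests entirely on \cite[Lem.~8.12]{BBM25}, which the paper allows us to assume. Beyond that nothing remains but routine point-set topology, namely that compact Hausdorff spaces are locally compact and that Hausdorff spaces are sober.
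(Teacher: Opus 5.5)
Your proposal is correct and is essentially the paper's argument. Since $\max(\d L)$ is always $T_1$, \cite[Lem.~8.12]{BBM25} turns stable local compactness into Hausdorffness, and conversely locally compact Hausdorff spaces are stably locally compact; compactness of $\max(\d L)$ in the presence of an S-unit (\cref{prop:S-unit-implies-max-dL-compact}) then gives the ``in particular'' clause, and you have just spelled out the routine point-set details the paper leaves implicit.
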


\begin{remark}
    It is an open question whether $\max(\d L)$ is stably locally compact iff it is sober. By the above theorem, this would imply that $\max(\d L)$ is Hausdorff iff it is sober.
    This is related to the
    nucleus $\rho : L \to L$ corresponding to the soberification of $\max(\d L)$ (see \cite[Rem.~6.10]{BBM25}).\footnote{
        If $\d L$ is coherent, 
        the nucleus $\rho$ can be described using the nucleus $j$ introduced in 
        \cite[Sec.~1]{Joh84}, which is used to characterize the
        soberification
        of $\max(L)$ for a coherent frame $L$. 
        Applying this to $\d L$ yields a nucleus $j \in \N(\d L)$ such that $\rho = j \circ \d$.
    }
    If $\rho$ were Scott continuous, then the frame $\rho L$ would be stably continuous by \cref{closure-under-Scott-cts}, and hence $\pt(\rho L)$ would be stably locally compact by \cref{hofmann-lawson-2}. If, in addition, $\max(\d L)$ were sober, then $\max(\d L) \cong \pt(\rho L)$, and hence $\max(\d L)$ would be stably locally compact.
    Thus, showing that $\rho$ is Scott continuous would yield a positive answer to the above question.
\end{remark}

It remains open whether $\max(\d L)$ is always locally compact. However, we will show that every locally compact Hausdorff space can be realized as a maximal $\d$-spectrum. For this, we observe that the $\d$-nucleus is the identity on regular frames.

\begin{proposition} 
    Let $L$ be a continuous regular frame and $X$ its \L-space.
    \begin{enumerate}[cref=proposition]
        \item The $\d$-nucleus is the identity. \label{regular-da=a}
        \item $X = X_{\d}$. \label{X=Xd}
        \item $L$ has an S-unit iff $L$ is compact. \label{regular-S-unit-is-compact}
    \end{enumerate}
\end{proposition}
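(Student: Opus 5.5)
For (1), I would show $\d a = a$ for every $a \in L$. Since $\d$ is a closure operator on a continuous frame (\cref{c-1}), $a \leq \d a$, so only $\d a \leq a$ needs proof. For this I use regularity. Because $L$ is regular, $\ll$ is a subrelation of $\prec$ (as noted before \cref{conreg-is-stably}). Hence $b \ll a$ gives $b \prec a$, that is, $b^* \vee a = 1$. Meeting with $b^{**}$ and using $b^{**} \wedge b^* = 0$ yields $b^{**} = b^{**} \wedge (b^* \vee a) \leq a$. Taking the join over all $b \ll a$ gives $\d a \leq a$.

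For (2), I would pass through the dual isomorphism of \cref{thm:nuclei-and-sublocales}. The identity nucleus is the least element of $\N(L)$, so it corresponds to the largest nuclear subset, which is $X$ itself. More directly, $x \in X_{\d}$ iff $\d^{-1}[x] \subseteq x$, and with $\d = \mathrm{id}$ this holds trivially for every $x$. Hence $X_{\d} = X$ by (1).

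For (3), suppose first that $L$ is compact. Then $1 \ll 1$, and $1$ is dense. So $1$ is a unit, hence an S-unit by \cref{example-S-unit}; alternatively, $\upset 1 = \{1\}$ is a Scott-open filter of dense elements, and \cref{prop:units-iff-S-unit} applies. Conversely, suppose $L$ has an S-unit $e$. By \cref{prop:S-unit-implies-max-bounded-and-compact}, $\max(\d L)$ is compact and $\d L$ is max-bounded. By (1), $\d L = L$, so $\max(L)$ is compact and $L$ is max-bounded. I then want to deduce that $L$ is compact.

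The cleanest route I see is dual and uses \cref{thm:max-dL-compactness}. By \cref{thm:S-unit-iffs-2}, an S-unit yields $F \in \SUp(X)$ with $\max X \subseteq F$. By (2) and \cref{lem:dense-filter}, $\loc X = \loc X_{\d} \subseteq F$. Since $X$ is \L-spatial, $\loc X$ is dense in $X$ and $F$ is closed, so $F = X$. Thus $X$ is a Scott upset, i.e. $\min X \subseteq \loc X$, and \cref{thm:compactness} gives that $L$ is compact.

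Frame-theoretically this says: the Scott-open filter $\twoheaduparrow e$ consists of dense elements, and in a regular frame $\d b = b$ forces each dense $b \in \twoheaduparrow e$ to equal $1$ (since $t \ll b$ dense gives $1 = t^{**} \leq \d b = b$). Then $e \ll 1$ with $\twoheaduparrow e = \{1\}$, and Scott-openness of $\{1\}$ is exactly compactness of $1$. The dual argument above is the main step to verify carefully, in particular the density step $\loc X \subseteq F \Rightarrow F = X$.
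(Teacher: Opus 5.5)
Your proof is correct. Parts (1), (2) and the implication ``compact $\Rightarrow$ S-unit'' in (3) are exactly the paper's arguments.

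For the converse in (3), your main route is dual, while the paper argues directly in the frame. Since $e \ll 1$, interpolate $e \ll a \ll 1$. Then \cref{lem:S-unit-ll-implies-1} gives $\d a = 1$, so $a = 1$ by (1), and hence $1 \ll 1$. Your closing frame-theoretic paragraph is essentially this argument. To make it precise, choose $t$ by interpolation $e \ll t \ll b$, so that $t \in \twoheaduparrow e$ is dense.

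The paper's version takes three lines and uses only interpolation and (1). Your dual version imports much more machinery: \cref{thm:max-dL-compactness}, \cref{lem:dense-filter} (which rests on \cref{thm:loc-Xd-eqv-conditions}), and \cref{thm:compactness}. Since these results are stated for stably continuous frames, you should cite \cref{conreg-is-stably} to apply them. What the dual version buys is a geometric reading: an S-unit is dually a Scott upset containing $\max X$, and once $X_{\d} = X$ such a Scott upset must be all of $X$, which is exactly \L-compactness.

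The density step you flag as delicate is in fact unnecessary. \cref{lem:dense-filter} already gives $X_{\d} \subseteq F$ directly, and $X_{\d} = X$ by (2). Finally, the detour through \cref{prop:S-unit-implies-max-bounded-and-compact} plays no role and can be dropped.
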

\begin{proof}

    \tref{regular-da=a} It suffices to show that $b \ll a$ implies $b^{**} \leq a$. Since $L$ is regular, $b \ll a$ implies $b \prec a$, so $b^* \vee a = 1$.
    Thus, 
    \[
    b^{**} = b^{**} \wedge (b^* \vee a) = (b^{**} \wedge b^*) \vee (b^{**} \wedge a) = b^{**} \wedge a \le a.
    \]

    \tref{X=Xd} This follows from \tref{regular-da=a} and \cref{thm:nuclei-and-sublocales} since $X$ and $X_{\d}$ are the nuclear subsets corresponding to the identity and $\d$-nucleus, respectively.

    \tref{regular-S-unit-is-compact} The right-to-left implication is immediate since $1$ being compact makes it a unit, and hence an S-unit. For the other implication, suppose $e$ is an S-unit. Then $e \ll 1$, so \cref{lem:interpolating} implies that there is $a \in L$ with $e \ll a \ll 1$. Therefore, $\d a = 1$ by \cref{lem:S-unit-ll-implies-1}. Thus, $a = 1$ by \tref{regular-da=a}, so $1 \ll 1$, and hence $L$ is compact.
\end{proof}

\begin{lemma}[{\cite[Lem.~7.15(3)]{BM23}}] \label{lem:loc-in-min}
    If $X$ is a regular \L-space, then $\loc X \subseteq \min X$.
\end{lemma}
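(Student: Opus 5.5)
Fix $x \in \loc X$; the goal is $x \in \min X$, and the argument is by contradiction. Suppose $y < x$. Then $x \nleq y$, so Priestley separation gives a clopen upset $U$ with $x \in U$ and $y \notin U$. Regularity of $X$ says that $\reg U$ is dense in $U$. The plan is to use the localic point $x$ to pass from this density statement to a single witness $V \in \clopup(X)$ with $x \in V$ and $\downset V \subseteq U$. Once we have such a $V$, the contradiction is immediate: $y \in \downset x \subseteq \downset V \subseteq U$, yet $y \notin U$.

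The key step is to produce the witness $V$. The set $\reg U$ is a union of clopen upsets, so it is an open upset, and since $\reg U \subseteq U$ with $U$ clopen, density means $\cl(\reg U) = U$. In particular $x \in \cl(\reg U)$. Now apply \cref{rem:loc-X-and-open-upsets}: because $x$ is localic and $\reg U \in \opup(X)$, membership $x \in \cl(\reg U)$ forces $x \in \reg U$. By the definition of $\reg U$, there is then some $V \in \clopup(X)$ with $x \in V$ and $\downset V \subseteq U$, which is exactly the witness needed.

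The main thing to verify is that \cref{rem:loc-X-and-open-upsets} really applies. It needs $\upset x$ to be a Scott upset, and this holds because $x \in \loc X$ means $\downset x$ is open, and then $\min \upset x = \{x\} \subseteq \loc X$. It also needs $\cl(\reg U)$ to be an upset, which is stated in that remark. I also note that the argument uses no continuity hypothesis, only regularity of $X$ and the fact that $x$ is localic. That matches the generality of the statement, so I expect no further obstacle.
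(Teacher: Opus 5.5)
Your proof is correct. The paper gives no proof of this lemma; it imports it from \cite[Lem.~7.15(3)]{BM23}. So there is no written argument to compare with, and yours stands as a self-contained proof.

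Your argument is the Priestley translation of the standard frame-theoretic fact that in a regular frame every completely prime filter $P$ is a minimal prime filter. In that argument one takes $a \in P$ and uses $a = \bigvee\{b \mid b \prec a\}$ with complete primeness to get $b \prec a$ with $b \in P$. Then $b^* \notin P$ while $b^* \vee a = 1$, which forces $a$ into every prime filter contained in $P$. In your version, the clopen upset $V$ with $x \in V$ and $\downset V \subseteq U$ plays the role of $b \prec a$. The step ``$x \in \cl(\reg U)$ forces $x \in \reg U$'' plays the role of complete primeness.

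You can shorten the verification you flag as the main obstacle, since the detour through Scott upsets is not needed. The set $\downset x$ is itself an open neighbourhood of $x \in \cl(\reg U)$. So it meets $\reg U$ in some $z \le x$, and $x \in \reg U$ follows because $\reg U$ is an upset.
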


\begin{proposition} \label{prop:equal-spectra}
    If $X$ is a continuous regular \L-space, then
    \[
        \loc X_{\d} = \loc X = \min (\loc X) = \min (\loc X_{\d}).
    \]
\end{proposition}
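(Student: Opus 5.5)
The plan is to reduce everything to two facts already available: the identity $X = X_{\d}$ for continuous regular frames (\cref{X=Xd}), and the inclusion $\loc X \subseteq \min X$ for regular \L-spaces (\cref{lem:loc-in-min}). Let $L$ be the continuous regular frame whose \L-space is $X$; this is legitimate by \cref{thm:ConRLPries}. Then $L$ is stably continuous by \cref{conreg-is-stably}, so $\d$ is a nucleus on $L$ and $X_{\d}$ makes sense.

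\emph{First equality.} By \cref{X=Xd}, $X_{\d} = X$. Hence $\loc X_{\d} = X_{\d} \cap \loc X = \loc X$, using \cref{rem:nuclear-and-priestley}.

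\emph{Second equality.} By \cref{lem:loc-in-min}, every localic point of $X$ is a minimal point of $X$. A point that is minimal in all of $X$ is in particular minimal in the subset $\loc X$. Thus $\loc X \subseteq \min(\loc X)$. The reverse inclusion holds trivially, so $\loc X = \min(\loc X)$.

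\emph{Third equality.} Apply $\min$ to the first equality: $\min(\loc X_{\d}) = \min(\loc X)$.

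I do not expect any real obstacle here. The only point that needs care is in the first equality: we must not read $\loc X_{\d}$ as the localic part of $X_{\d}$ computed intrinsically in its own \L-space structure. The identification $\loc X_{\d} = X_{\d} \cap \loc X$ from \cref{rem:nuclear-and-priestley} settles this, and since $X_{\d} = X$ anyway, the two readings coincide in this case.
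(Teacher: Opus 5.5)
Your proof is correct and takes the same approach as the paper. The paper combines \cref{lem:loc-in-min}, which gives $\min(\loc X) = \loc X$, with \cref{X=Xd}, which gives $X_{\d} = X$; you do the same, just writing out the intermediate steps (passing to the continuous regular frame to see that $\d$ is defined, and $\loc X_{\d} = X_{\d} \cap \loc X$) explicitly.
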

\begin{proof}
    By \cref{lem:loc-in-min}, 
    $\min (\loc X) = \loc X$. Now apply \cref{X=Xd} to conclude the proof.
\end{proof}

Since the localic part of the $\L$-space of a frame coincides with its space of points, combining \cref{cor:min-loc-Xd=max-dL,prop:equal-spectra} yields 
the following, where part \tref{thm:max-dl-points-2} follows from $\d = d$ (see \cref{c-3}):

\begin{theorem} \leavevmode
    Let $L$ be a regular frame.
    \begin{enumerate}[cref=theorem]
        \item If $L$ is continuous, then $\max(\d L)$ is the space of points of $L$. 
        \label{thm:max-dL-points}
        \item If $L$ is algebraic, then $\max(d L)$ is the space of points of $L$.
        \label{thm:max-dl-points-2}
    \end{enumerate}
\end{theorem}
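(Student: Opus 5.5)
The plan is to assemble results already established, using \cref{cor:min-loc-Xd=max-dL} together with \cref{prop:equal-spectra}. For \tref{thm:max-dL-points}, let $L$ be continuous and regular, and let $X$ be its \L-space. By \cref{conreg-is-stably}, $L$ is stably continuous, so \cref{cor:min-loc-Xd=max-dL} gives a homeomorphism between $\max(\d L)$ and $\min(\loc X_{\d})$. Here $\min(\loc X_{\d})$ carries the subspace topology from $\loc X$; equivalently, it is viewed inside $\loc X_{\d}$, which has the topology induced by $\clopup(X)$.

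Next, by the duality of \cref{thm:ConRLPries}, $X$ is a continuous regular \L-space. Then \cref{prop:equal-spectra} gives
\[
\min(\loc X_{\d}) = \loc X
\]
as sets. Since both sides carry the topology $\{U \cap \loc X \mid U \in \clopup(X)\}$ restricted to the same set, they are equal as spaces. Finally, $\loc X = \pt L$ (see the discussion preceding \cref{thm:spatiality}), so $\max(\d L) \cong \pt L$. The one point I would double-check is that the topology used on $\min(\loc X_{\d})$ in \cref{cor:min-loc-Xd=max-dL} is indeed the subspace topology from $\loc X$. In the proof of \cref{thm:min-loc-X-homeomorphic-max-L}, applied to the \L-space $X_{\d}$ of $\d L$, the opens are $U\cap\min(\loc X_{\d})$ with $U$ a clopen upset of $X_{\d}$. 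Every such $U$ is a restriction of a clopen upset of $X$, since $X_{\d}$ is the dual of the quotient $L \to \d L$ and $\sigma(\d a) \cap X_{\d} = \sigma(a)\cap X_{\d}$. So the two topologies agree; this identification is the only potentially delicate step. Even without it, one can argue directly: because $\d$ is the identity (\cref{regular-da=a}), we have $\d L = L$, and \cref{thm:min-loc-X-homeomorphic-max-L} applies to $L$ itself.

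Part \tref{thm:max-dl-points-2} follows from \tref{thm:max-dL-points}. If $L$ is algebraic and regular, then $L$ is locally Stone, and hence continuous and regular. By \cref{c-3}, $\d = d$, so $\max(dL) = \max(\d L) \cong \pt L$.
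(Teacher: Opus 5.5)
Your proposal is correct and follows the paper's argument: \cref{cor:min-loc-Xd=max-dL} gives $\max(\d L)\cong\min(\loc X_{\d})$, \cref{prop:equal-spectra} gives $\min(\loc X_{\d})=\loc X=\pt L$, and part~(2) follows from $\d=d$ (\cref{c-3}). Your extra checks, namely stable continuity via \cref{conreg-is-stably} and the agreement of the subspace topologies, fill in details the paper leaves implicit but do not change the route.
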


We are finally in a position to show that every locally compact Hausdorff space arises as the maximal $\d$-spectrum.

\begin{corollary} \label{cor:main-theorme}
    Every locally compact Hausdorff space is homeomorphic to $\max (\d L)$ for some continuous regular frame $L$.
\end{corollary}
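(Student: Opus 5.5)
The plan is to take $L = \Omega(X)$ for the given locally compact Hausdorff space $X$ and then read off the maximal $\d$-spectrum from the results just established.

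First I check that $\Omega(X)$ lies in \ConRFrm. Regularity is standard: every open $U$ is a union of opens $V$ with $\cl V \subseteq U$, and $\cl V \subseteq U$ gives $V^* \cup U = X$, so $V \prec U$. Continuity comes from local compactness. Given $x \in U$, choose an open $V$ and a compact $K$ with $x \in V \subseteq K \subseteq U$. Then $V \ll U$, since any cover of $U$ covers $K$ and so has a finite subcover of $V$. Hence $U$ is the join of the opens way below it. Alternatively, both facts are packaged in \cref{hofmann-lawson-LC}, which says \ConRFrm is dually equivalent to \LCHaus via $\Omega$ and $\pt$.

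Next I apply \cref{thm:max-dL-points}: since $L$ is continuous and regular, $\max(\d L)$ is the space of points of $L$. To be concrete about where this homeomorphism comes from, it is the chain
\[
\max(\d L) \cong \min(\loc X_{\d}) = \loc X_L \cong \pt L,
\]
obtained from \cref{cor:min-loc-Xd=max-dL} and \cref{prop:equal-spectra}. Here $X_L$ denotes the \L-space of $L$, and the identification $\pt L = \loc X_L$ is the one recalled earlier.

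Finally, since $X$ is Hausdorff, it is sober, so the unit of the $\Omega \dashv \pt$ adjunction gives $\pt \Omega(X) \cong X$. This is again part of \cref{hofmann-lawson-LC}. Composing the homeomorphisms yields $X \cong \max(\d L)$.

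The only point needing care is that the homeomorphism in \cref{thm:max-dL-points} really uses the topology of $\pt L$, and that the topology on $\loc X_L$ induced from clopen upsets matches it. Both identifications are stated earlier in the paper, so nothing new has to be proved beyond verifying that $\Omega(X)$ is continuous and regular.
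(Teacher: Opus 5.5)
Your proposal is correct and follows the paper's proof: take $L=\Omega(X)$, use \cref{hofmann-lawson-LC} to get that $L$ is continuous regular with $\pt L \cong X$, and conclude with \cref{thm:max-dL-points}. Your direct check of continuity and regularity, and your unpacking of the homeomorphism through \cref{cor:min-loc-Xd=max-dL} and \cref{prop:equal-spectra}, only spell out what the paper cites; the one small slip is notational, since $X_{\d}$ in your chain should read $(X_L)_{\d}$, as you had reserved $X$ for the original space.
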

\begin{proof}
    Let $X$ be a locally compact Hausdorff space. By \cref{hofmann-lawson-LC}, $L := \Omega(X)$ is a continuous regular frame such that $X \cong \pt L$. Therefore, $\max(\d L) \cong X$ by \cref{thm:max-dL-points}. 
\end{proof}

By adapting the above argument, we obtain analogous results for compact Hausdorff spaces, which arise as maximal $\d$-spectra of compact regular frames. Similarly, in the arithmetic setting, locally Stone and Stone spaces arise from locally Stone and Stone frames, respectively.

\begin{corollary}
\leavevmode
    \begin{enumerate}
        \item Every compact Hausdorff space is homeomorphic to $\max (\d L)$ for some compact regular frame $L$.
        \item Every locally Stone space is homeomorphic to $\max (d L)$ for some locally Stone frame $L$.
        \item Every Stone space is homeomorphic to $\max (dL)$ for some Stone frame $L$.
    \end{enumerate}
\end{corollary}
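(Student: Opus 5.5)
The three parts all follow the pattern of \cref{cor:main-theorme}: take the frame of opens of the given space, and identify $\max(\d L)$ (or $\max(dL)$) with its space of points via \cref{thm:max-dL-points} or \cref{thm:max-dl-points-2}. The remaining work is to check that the frame of opens lies in the right category, using the dualities of \cref{thm:confrm-duality,thm:Alg-dualities}.

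For part (1), let $X$ be compact Hausdorff and put $L := \Omega(X)$. By Isbell duality (\cref{isbell-duality}), $L$ is a compact regular frame and $X \cong \pt L$. Every compact regular frame is stably compact, since \KRFrm is a full subcategory of \StKFrm. Hence $L$ is a continuous regular frame, and \cref{thm:max-dL-points} gives $\max(\d L) \cong \pt L \cong X$.

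For part (2), let $X$ be locally Stone and put $L := \Omega(X)$. By \cref{thm:Stone}, $L$ is a locally Stone frame with $X \cong \pt L$. Locally Stone frames are zero-dimensional, hence regular. They are also algebraic, as recalled before \cref{fig:con-and-alg}, and in fact arithmetic, since \LStoneFrm is a full subcategory of \AriFrm. So \cref{thm:max-dl-points-2} applies and gives $\max(dL) \cong \pt L \cong X$. If one prefers to go through $\d$: $L$ is arithmetic, hence stably continuous, so $\d$ is a nucleus, and $\d = d$ by \cref{c-3}.

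For part (3), a Stone space is in particular locally Stone, and by \cref{thm:LStone} its frame of opens is a Stone frame. So the argument of part (2) applies verbatim, and the frame obtained is Stone.

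There is no genuine obstacle here. The only point needing care is that the hypotheses of \cref{thm:max-dL-points,thm:max-dl-points-2} are met: regularity of the frames involved, and algebraicity of locally Stone frames. Both are recorded in \cref{sec:Preliminaries}.
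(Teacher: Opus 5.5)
Your proposal is correct and follows the paper's proof: run the argument of \cref{cor:main-theorme} on $\Omega(X)$, with Isbell duality and the locally Stone and Stone dualities of \cref{thm:Alg-dualities} in place of \cref{hofmann-lawson-LC}, then apply \cref{thm:max-dL-points} or \cref{thm:max-dl-points-2}. Your checks that compact regular frames are continuous and that locally Stone frames are algebraic and regular are exactly what those theorems require, details the paper's one-line proof leaves implicit.
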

\begin{proof}
    The result follows by the same argument as in \cref{cor:main-theorme}, but uses Theorems~\ref{isbell-duality} and 
    \ref{thm:Alg-dualities}(\hyperref[thm:LStone]{4},\hyperref[thm:Stone]{5}) in place of \cref{hofmann-lawson-LC}.
\end{proof}

In conclusion, we briefly summarize our main findings and remaining open problems.
We generalized the $d$-nucleus from arithmetic frames to stably continuous frames and showed that the resulting nucleus $\d$ preserves the main structural features of the classical $d$-nucleus. In particular, $\d$ is the largest Scott-continuous nucleus beneath the double-negation nucleus. We used Priestley duality to provide a convenient characterization of $\d$, which allowed us to prove that every locally compact Hausdorff space is homeomorphic to $\max(\d L)$ for some continuous regular frame $L$. As a consequence, 
we obtained that every locally Stone space
is homeomorphic to $\max(dL)$ for some arithmetic frame $L$.

It remains open whether $\max(\d L)$ or $\max(d L)$ is always stably locally compact. More generally, it remains open which spaces arise as $\max(\d L)$ for a stably continuous frame $L$ or as $\max(dL)$ for an arithmetic frame $L$. In particular, while we showed that each locally compact Hausdorff space is realized as $\max(\d L)$ for some continuous regular frame $L$, it remains open whether each locally compact Hausdorff space is realized as $\max(d L)$ for some arithmetic regular frame $L$.

\addtocontents{toc}{\SkipTocEntry}
\section*{Acknowledgments}

We are grateful to the participants of the Chapman Frame Theory seminar, Ana Belen Avilez, Fred Dashiell, 
Tega Ighedo, Peter Jipsen, and Joanne  Walters-Wayland, for their input during the visit of the first three authors.
\newcommand{\etalchar}[1]{$^{#1}$}

\end{document}

%% file: preamble.tex
\usepackage[a4paper, margin=1in]{geometry}
\usepackage[utf8]{inputenc}
\usepackage[T1]{fontenc}
\usepackage{lmodern}

\usepackage{subcaption}

\usepackage{amssymb}
\usepackage{float}

\usepackage[dvipsnames]{xcolor}
\usepackage[shortlabels]{enumitem}
\usepackage{needspace}
\usepackage{booktabs}
\usepackage{tikz-cd}
\usepackage{xspace}
\usetikzlibrary{decorations.pathmorphing}
\usetikzlibrary{decorations.markings}
\usetikzlibrary{decorations.pathreplacing}
\usetikzlibrary{arrows.meta}
\usetikzlibrary{calc,positioning}
\usetikzlibrary{arrows.meta,calc,decorations.pathmorphing,shadows.blur,positioning}

\makeatletter
\DeclareRobustCommand{\equationsymbols}[1]{%
  \expandafter\@equationsymbols\csname c@#1\endcsname
}
\def\@equationsymbols#1{%
  \ifcase#1%
  \or \ensuremath{\dagger}%
  \or \ensuremath{\ddagger}%
  \or\else\@ctrerr\fi}%
\makeatother

\usepackage{hyperref}
\hypersetup{linktoc=all,colorlinks=true,allcolors=Blue,breaklinks=true}
\usepackage[capitalize,noabbrev]{cleveref}

\DeclareRobustCommand{\SkipTocEntry}[5]{} 

\newtheorem{lemma}{Lemma}[section]
\newtheorem{theorem}[lemma]{Theorem}
\newtheorem{proposition}[lemma]{Proposition}
\newtheorem{corollary}[lemma]{Corollary}
\theoremstyle{definition}
\newtheorem{definition}[lemma]{Definition}
\newtheorem{remark}[lemma]{Remark}
\newtheorem{discussion}[lemma]{Discussion}
\newtheorem{example}[lemma]{Example}

\setlist[enumerate,1]{label={\upshape(\arabic*)},
ref={\upshape\thetheorem(\arabic*)}}% Default enumerate should be upshape.

\makeatletter
\newcommand{\itemlabel}[1]{\protected@edef\@currentlabel{{(\arabic{enumi})}}\oldlabel{#1}}

\newcommand\crefenumtype{} % Add cref key to lists
\SetEnumitemKey{cref}{%
  before={\def\crefenumtype{#1}%
  \let\oldlabel\label
  \renewcommand{\label}[2][]{%
    \ifx\crefenumtype\empty
      \oldlabel{####2}%
    \else
      \oldlabel[\crefenumtype]{####2}%
      \itemlabel{####2@item}%
    \fi}%
  }%
}
\newcommand{\tref}[1]{\ref{#1@item}}
\makeatother

\DeclareMathAlphabet{\dutchcal}{U}{dutchcal}{b}{n}
\newcommand{\defOp}[3][\mathrm]{%
  \newcommand{#2}{\ensuremath{\mathop{#1{#3}}}\xspace}%
}
\newcommand{\defOrd}[3][\mathrm]{%
  \newcommand{#2}{\ensuremath{\mathord{#1{#3}}}\xspace}%
}

\let\d\relax
\defOrd[\dutchcal]{\d}{d}
\defOrd{\downset}{\downarrow}
\defOrd{\upset}{\uparrow}

\defOrd[\mathfrak]{\B}{B}
\defOrd{\K}{K}
\let\L\relax
\defOrd[\mathsf]{\L}{L}
\defOrd{\N}{N}

\defOp{\spec}{spec}
\defOp{\pt}{pt}
\defOp{\alg}{alg}
\defOp{\con}{con}
\defOp{\reg}{reg}
\defOp{\cen}{zer}
\defOp{\clopsup}{ClopSUp}

\defOp{\opup}{OpUp}
\defOp{\clup}{ClUp}
\defOp{\SFilt}{SFilt}
\defOp{\Filt}{Filt}
\defOp{\SUp}{SUp}
\defOp{\clopup}{ClopUp}
\defOp{\loc}{loc}
\defOp{\cl}{cl}
\defOp{\intr}{int}

\newcommand\twoheaduparrow{\mathord{\rotatebox[origin=c]{90}{$\twoheadrightarrow$}}}

\newcommand{\cat}[1]{\mathbf{#1}}
\newcommand{\defCat}[2]{\defOp[\cat]{#1}{#2}}

\defCat{\DLat}{DLat}
\defCat{\Frm}{Frm}
\defCat{\SFrm}{SFrm}
\defCat{\AlgFrm}{AlgFrm}
\defCat{\AriFrm}{AriFrm}
\defCat{\CohFrm}{CohFrm}
\defCat{\LStoneFrm}{LStoneFrm}
\defCat{\StoneFrm}{StoneFrm}
\defCat{\ConFrm}{ConFrm}
\defCat{\StConFrm}{StConFrm}
\defCat{\StKFrm}{StKFrm}
\defCat{\ConRFrm}{ConRFrm}
\defCat{\KRFrm}{KRFrm}
\defCat{\Top}{Top}
\defCat{\LCSob}{LKSob}
\defCat{\SLCSp}{StLKSp}
\defCat{\SKSp}{StKSp}
\defCat{\KHaus}{KHaus}
\defCat{\LCHaus}{LKHaus}
\defCat{\Sob}{Sob}
\defCat{\KBSob}{KBSob}
\defCat{\SKBSp}{SKBSp}
\defCat{\Spec}{Spec}
\defCat{\LStone}{LStone}
\defCat{\Stone}{Stone}
\defCat{\Pries}{Pries}
\defCat{\LPries}{LPries}
\defCat{\AlgLPries}{AlgL}
\defCat{\AriLPries}{AriLP}
\defCat{\CohLPries}{CohLP}
\defCat{\StoneLPries}{StoneLP}
\defCat{\LStoneLPries}{LStoneLP}
\defCat{\ConLPries}{ConLP}
\defCat{\StConLPries}{StConLP}
\defCat{\StKLPries}{StKLP}
\defCat{\ConRLPries}{ConRLP}
\defCat{\KRLPries}{KRLP}

\definecolor{plotd}{HTML}{000000}
\definecolor{plotc}{HTML}{525252}
\definecolor{plotb}{HTML}{969696}
\definecolor{plota}{HTML}{d9d9d9}
\definecolor{cyand}{HTML}{08519c}
\definecolor{cyanc}{HTML}{2171b5}
\definecolor{cyanb}{HTML}{4292c6}
\definecolor{cyana}{HTML}{9ecae1}

\tikzcdset{dual/.style={squiggly}}

\newcommand{\filter}[1]{\dutchcal{#1}}